\newcommand{\Z}{\mathbb{Z}}
\newcommand{\N}{\mathbb{N}}
\newcommand{\F}{\mathcal{F}}
\newcommand{\asdim}{\textnormal{asdim}}
\newcommand{\asi}{\textnormal{asi}}
\newcommand{\res}{\upharpoonright}
\newtheorem{theorem}{Theorem}
\newtheorem{corollary}{Corollary}
\newtheorem{lemma}{Lemma} 
\newtheorem{prop}{Proposition}  
\newtheorem{prob}{Problem} 
\begin{document}
 
\title{Borel Edge Colorings for Finite Dimensional Groups}
\author{Felix Weilacher}
\maketitle

\begin{abstract}
\noindent We study the potential of Borel asymptotic dimension, a tool introduced recently in \cite{dim}, to help produce Borel edge colorings of Schreier graphs generated by Borel group actions. We find that it allows us to recover the classical bound of Vizing in certain cases, and also use it to exactly determine the Borel edge chromatic number for free actions of abelian groups. 
\end{abstract}


\section{Introduction}\label{sec:intro}


In a recent paper \cite{dim}, Conley et al. introduced a new tool, Borel asymptotic dimension, to the study of Borel combinatorics. We will define this notion in Section \ref{sec:dim}, but for now we emphasize that they exhibited several useful consequences which follow when a locally finite Borel graph has finite Borel asymptotic dimension, including hyperfiniteness for its connectedness relation, and Borel vertex colorings using relatively few colors. 

The aim of this paper is to add to this list by showing applications of finite Borel asymptotic dimension to edge colorings.

Let $X$ be a standard Borel space and $G \subset X \times X$ a Borel graph on $X$. An \textit{edge coloring} of $G$ is a function $c:G \rightarrow Y$ which sends adjacent edges in $G$, that is, edges sharing a vertex, to distinct elements of $Y$, or ``colors''. Such a $c$ is called a \textit{$k$-coloring} if $|Y| = k$. The \textit{edge chromatic number} of $G$, denoted $\chi'(G)$, is the smallest cardinal $k$ such that $G$ admits a $k$-coloring. The \textit{Borel edge-chromatic number} of $G$, denoted $\chi_B'(G)$, is the smallest cardinal $k$ such that $G$ admits a Borel $k$-coloring. See \cite{KM} for a survey of these and related notions.

The Borel graphs of interest to us in this paper will be those generated by Borel actions of finitely generated groups. A \textit{marked group} is a pair $(\Gamma,S)$ where $\Gamma$ is a group and $S$ is a finite symmetric generating set for $\Gamma$ not containing 1. We will sometimes omit the $S$ if it will not cause confusion. Let $X$ be a standard Borel space and $a:\Gamma \curvearrowright X$ a free Borel action. Let $G(a,S)$ denote the \textit{Schreier graph} generated by this action. This is the graph defined by setting $x,y \in X$ adjacent if and only if $\gamma \cdot_a x = y$ for some $\gamma \in S$.

Note that all the notions in the above paragraph make sense even if $S$ does not generate the entire group $\Gamma$. We shall sometimes use them in this case.

A classical theorem of Vizing states that if $G$ is a graph with maximum degree $d$, then $\chi'(G) \leq d+1$. Of natural interest is the extent to which this bound continues to hold in the Borel setting. Greb\'{i}k and Pikhurko have shown that this bound holds in the measurable setting when $G$ is measure preserving \cite{GP}, but on the other hand Marks has shown that it fails in the general Borel setting, even for acyclic $G$ \cite{M15}.

As was previously promised, in section \ref{sec:dim} we shall see a precise definition of Borel asymptotic dimension for graphs and Borel actions. Actually, of more direct use will be a variant of this number, also from \cite{dim}, called Borel asymptotic separation index. Our main result in Section \ref{sec:d+1} will be that the Vizing's bound holds in the Borel context for graphs generated by certain group actions when this index is 1.

\begin{theorem}\label{th:d+1}
Let $a:\Gamma \curvearrowright X$ be a free Borel action of a marked group $(\Gamma,S)$ on a standard Borel space $X$ with Borel asymptotic separation index 1, and such that none of the elements of $S$ have odd order. Then $\chi_{B}'(G(a,S)) \leq |S|+1$.
\end{theorem}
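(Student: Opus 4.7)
The plan is to combine the localization provided by the asymptotic separation index hypothesis with a Borel adaptation of Vizing's classical fan-and-Kempe-chain argument, using the no-odd-order hypothesis to keep the relevant alternating structures under control.

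First, I would fix a scale $r$ much larger than $|S|$. Using $\asi(G(a,S)) \leq 1$ at scale $r$, I obtain a Borel partition $X = U_0 \sqcup U_1$ such that for each $i \in \{0,1\}$ the connected components of the induced subgraph $G(a,S)\res U_i$ are all finite, and any two distinct components lie at graph-distance $\geq r$ in $G(a,S)$. Since $G(a,S)\res U_i$ is then a Borel disjoint union of finite subgraphs of maximum degree $\leq |S|$, classical Vizing applies componentwise, and a uniform Borel selector (e.g.\ lex-least coloring under some Borel enumeration of each finite component) yields Borel edge colorings $c_i : G(a,S)\res U_i \to \{0,\dots,|S|\}$.

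It remains to color the ``crossing edges'' between $U_0$ and $U_1$ using the same palette while preserving the $c_i$. At each vertex $v \in U_i$ the coloring $c_i$ uses only $|S| - \deg_{\textnormal{cross}}(v)$ of the $|S|+1$ colors, leaving at least $\deg_{\textnormal{cross}}(v)+1$ free — precisely Vizing's slack. The plan is to process the crossing edges in parallel via the standard Vizing fan-plus-Kempe-chain construction: at an uncolored crossing edge $\{u,v\}$, build a Vizing fan at $u$, and if completion fails directly, perform a Kempe swap on the relevant $(\alpha,\beta)$-chain. The role of the no-odd-order hypothesis is to ensure that, for every $s \in S$, the monochromatic subgraph $G_s$ is a disjoint union of even cycles, bi-infinite paths, and perfect matchings (so is bipartite); consequently, the bicolored subgraphs on which Kempe swaps operate contain only even cycles, and the swaps are globally consistent.

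The main obstacle I anticipate is \emph{confinement}: every Vizing fan and every Kempe chain triggered by a crossing edge $e$ must lie inside the $r$-neighborhood of $e$, so that the update is a Borel function of a bounded neighborhood and updates at crossing edges with disjoint $r$-neighborhoods can be executed in parallel. The classical Vizing proof gives a uniform $|S|$-dependent bound on fan lengths, but a Kempe chain could, in principle, oscillate across the $U_0/U_1$ boundary and grow unbounded. Handling this will be the technical heart of the argument — for example, by iterating the construction over a nested sequence of $\asi \leq 1$ partitions at geometrically growing scales, or by biasing the initial $c_i$ so as to prevent long alternating chains from forming — and it is where the even-order structure of the generators will be exploited most crucially.
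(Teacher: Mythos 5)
Your proposal takes a genuinely different route from the paper, and the point where it diverges is exactly where it breaks down. The paper never runs the fan/Kempe-chain machinery at all: it writes $S = S_0 \sqcup S_1$ with $S_1$ the even-finite-order generators and $S_0 = \{\gamma_1^{\pm1},\dots,\gamma_d^{\pm1}\}$ the infinite-order ones, $2$-colors each $G(a,\{\gamma^{\pm1}\})$ for $\gamma \in S_1$ (its components are finite even cycles), and colors each $G(a,\{\gamma_i^{\pm1}\})$ --- a disjoint union of $\Z$-lines --- with the colors $2i-1$, $2i$ plus one extra color $2d+1$ shared across all $i$. The only use of $\asi_B = 1$ is to produce pairwise disjoint Borel sets $V_1,\dots,V_d$ met infinitely often by every injective ray (Lemma \ref{lem:rainbow}), so that the parity-breaking occurrences of the shared color on the $\gamma_i$-lines can be confined to $V_i$ and hence never collide across different $i$. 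No recoloring of already-colored edges is ever performed.

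The gap in your approach is the confinement problem you flag in your last paragraph, and it is not a deferrable technicality: it is the central difficulty of Borel Vizing and the reason Marks' counterexample exists. An $(\alpha,\beta)$-Kempe chain is a component of the union of two color classes drawn from arbitrary mixtures of generators; the no-odd-order hypothesis controls neither its length nor its parity. Your claim that the bicolored subgraphs ``contain only even cycles'' does not follow --- the hypothesis makes each single-generator subgraph bipartite, but a union of two color classes is not a union of single-generator subgraphs. Unbounded chains cannot be swapped in parallel at infinitely many crossing edges without conflicts, and neither of your two suggested remedies (nested scales, biasing the initial colorings) is developed into an argument; absent one, the proof does not go through. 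A secondary error: $\asi_B = 1$ at scale $r$ makes the components of $G^{\leq r}\res U_i$ finite and pairwise far apart; it does not make distinct components of $G\res U_i$ pairwise $r$-separated (two such components can lie at distance $2$ inside a single $G^{\leq r}\res U_i$-cluster), though this part is repairable by working with the clusters.
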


We emphasize that the condition on the generators in Theorem \ref{th:d+1} implies that $\chi'(G(a,S)) = |S|$. Nevertheless, we will see in Section \ref{sec:Z^d} that the ``$+1$'' in Theorem \ref{th:d+1} cannot always be removed.

In \cite{dim}, it is shown that finite Borel asymptotic dimension implies a Borel asymptotic separation index of 1. Several quite general classes of groups, including those with a polynomial growth rate, are also shown to always have finite Borel asymptotic dimension for their free Borel actions. The work of that paper therefore provides us with many groups to which Theorem \ref{th:d+1} can be applied.

We will also see in Section \ref{sec:d+1} that Theorem \ref{th:d+1} can be applied to finite extensions of marked groups which satisfy its constraints.

Finally, we will note in Section \ref{sec:d+1} that Theorem \ref{th:d+1} implies that the Vizing bound holds in certain Baire measurable or measurable settings.

It follows from the third most recent paragraph that free actions of abelian groups always have finite Borel asymptotic dimension, although this was known earlier from work of Gao et al. \cite{toast}. In Section \ref{sec:Z^d}, we will see how a more specific analysis allows us to improve on Theorem \ref{th:d+1} for these groups by exactly determining their edge chromatic numbers.

\begin{theorem}\label{th:Z^d}
Let $(\Gamma,S)$ be a marked group with $\Gamma = \Delta \times \Z^d$ abelian, where $\Delta$ is the torsion part of $\Gamma$ and $d \in \omega$ is its free rank. Let $a:\Gamma \curvearrowright X$ be the action of $\Gamma$ on the free part of its Bernoulli shift, and $G = G(a,S)$
\begin{enumerate}
    \item If $d = 0$, $\chi'(G) = \chi_B'(G) = |S|$ if and only if $\Delta$ has even order. (and $\chi'(G) = \chi_B'(G) = |S|+1$ otherwise.)
    \item If $d = 1$, $\chi'(G) = |S|$, and $\chi_B'(G) = |S|$ if and only if $\Delta$ has even order (and $\chi_B'(G) = |S|+1$ otherwise.)
    \item If $d \geq 2$, $\chi'(G) = \chi_B'(G) = |S|$.
\end{enumerate}
\end{theorem}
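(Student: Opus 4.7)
The plan is to compute $\chi'(G)$ and $\chi_B'(G)$ separately, using freeness of $a$ to identify $\chi'(G) = \chi'(\cay(\Gamma,S))$. For part (1), $G$ is a disjoint union of finite copies of $\cay(\Delta,S)$, so $\chi_B'(G) = \chi'(G)$. Here I will invoke the classical fact that every Cayley graph of a finite abelian group of even order admits a 1-factorization, giving $\chi' = |S|$ in the even case; the opposite direction for $|\Delta|$ odd follows immediately because an $|S|$-regular graph on an odd number of vertices cannot admit any perfect matching, so $\chi' \geq |S|+1$, and Vizing closes the bound to $|S|+1$. For the classical $\chi'(G) = |S|$ in parts (2) and (3), I lift from a finite even-order quotient: choose $N$ large and even so that $S$ injects into $\Gamma_N := \Delta \times (\Z/N)^d$, apply the previous step to $\cay(\Gamma_N,S)$ (which has even order), and pull the resulting $|S|$-edge coloring back to a periodic $|S|$-edge coloring of $\cay(\Gamma,S)$.

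For the Borel upper bound $\chi_B'(G) \leq |S|$ in parts (2) with $|\Delta|$ even and (3), I combine this periodic coloring with a Borel toast decomposition. Free Borel actions of $\Z^d$ have finite Borel asymptotic dimension by \cite{toast, dim}, which provides, for any scale $R$, a Borel partition of the orbit equivalence relation into connected tiles of radius at most $R$ whose closures are pairwise well-separated. Choosing $R$ a large multiple of $N$, I apply the lifted coloring in each tile's interior and then resolve color conflicts along inter-tile edges by reserving a buffer ring of width $O(N)$ on each boundary and running Vizing-style alternating-color chain recolorings inside it. When $d \geq 2$, the transverse dimension ensures each chain terminates inside its buffer; when $d = 1$ with $|\Delta|$ even, the index-2 subgroup of $\Delta$ plays the analogous role in absorbing the corrections. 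Carrying this out uniformly and Borel-measurably across all tiles is, in my view, the main obstacle of the proof.

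For the Borel lower bound $\chi_B'(G) \geq |S|+1$ in part (2) with $\Delta$ of odd order, I rule out Borel $|S|$-edge colorings via a measure-theoretic argument on the Bernoulli measure: any such coloring would yield $|S|$ Borel perfect matchings whose direction-densities (that is, the measures of the sets $\{x : \{x, s \cdot x\}$ has color $i\}$) are constrained by ergodicity and by the odd cardinality of each $\Delta$-coset in an orbit, in a way incompatible with the coloring being proper. Alternatively one may try to reduce the hypothetical coloring to a Borel $2$-coloring of the $\Z$-Schreier graph, known to fail on Bernoulli shifts. For the matching upper bound $\chi_B'(G) \leq |S|+1$ in this case, Theorem \ref{th:d+1} does not apply directly since the $\Delta$-generators have odd order; I will therefore adapt its proof, coloring $\Delta$-edges within each $\Delta$-coset via the $(|S \cap \Delta|+1)$-edge coloring from part (1) and $\Z$-edges using the $\Z$-direction's finite Borel asymptotic dimension, reusing colors between the two directions carefully so as to stay within $|S|+1$ colors total.
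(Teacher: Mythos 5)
Your overall architecture is reasonable, and several pieces line up with the paper: the parity obstruction for $d=0$ odd, the reduction of the classical bound to finite even-order quotients, and your ``alternative'' route for the Borel lower bound in part (2) (reducing a hypothetical Borel perfect matching to a Borel $2$-coloring of the $\Z$-Schreier graph) is exactly the paper's Proposition \ref{prop:Zexample}, though you would still need the counting argument there showing that odd $|\Delta|$ forces the parity of the edge-boundary count to flip at each step. Likewise, your $|S|+1$ upper bound for $d=1$ with $\Delta$ odd is not something you need to re-derive: it is Corollary \ref{cor:d+1} (equivalently Corollary \ref{cor:2end}), which was proved precisely to handle generators whose images in $\Gamma/\Delta$ are the relevant objects.

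The genuine gap is in the Borel upper bound $\chi_B'(G)\leq|S|$, which is the entire content of Section \ref{sec:Z^d}. You propose to patch the periodic coloring across tile boundaries by ``Vizing-style alternating-color chain recolorings'' confined to a buffer of width $O(N)$, with ``transverse dimension'' ensuring termination. This step does not work as stated and is not justified: in the pulled-back periodic coloring, a Kempe chain in the two colors assigned to a single generator direction is an entire bi-infinite orbit of that generator, and chains mixing colors from different directions have no a priori length bound, so nothing confines them to a buffer. The paper replaces this step with a different mechanism: a notion of \emph{protocol} (a base-point-dependent periodic coloring) together with Lemma \ref{lem:general}, which swaps the parity of one generator's two-coloring along a selected pair of its orbits by locally \emph{borrowing} a color from a transverse generator; transitioning between protocols inside an annulus is then Lemmas \ref{lem:1transition}--\ref{lem:2sets}, and the gluing uses $\asi_B=1$. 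Moreover, for arbitrary generating sets this requires a substantial case analysis (the three-generator $\Z^2$ cases with genuine number-theoretic input about $\gcd$'s), and the $d=1$, $|\Delta|$ even case --- which you dispatch in one sentence about ``the index-2 subgroup absorbing corrections'' --- is the most delicate part of the paper, requiring the code/double-code formalism, Lemma \ref{lem:game}, and Lemma \ref{lem:Ztransition}. As written, your proposal identifies where the difficulty lies but does not supply the idea that resolves it.
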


The point of using the Bernoulli shift in the above statement is that arbitrary Borel free actions of $\Gamma$ always admit Borel $\Gamma$-equivariant embeddings to the free part of the Bernoulli shift \cite{JKL}, and so the chromatic numbers for the free part of the Bernoulli shift are the supremums of the chromatic numbers of arbitrary free actions of $\Gamma$.

Note that the statements in parentheses follow from Vizing's Theorem/ Theorem \ref{th:d+1} (or rather the aforementioned generalization of the theorem to finite extensions)  and the fact that when $\Gamma$ is finite, its discrete and Borel combinatorics coincide. 

During the preparation of this project, we became aware that this result was recently arrived at two other times independently in the special case $\Gamma = \Z^d$ and $S =$ the standard generating set. \cite{GR},\cite{BHT} (the latter for $d = 2$). The key ideas from the proof given in \cite{GR} appear also in this work, but some additional ideas are needed to extend the result to arbitrary abelian groups and arbitrary generating sets. Additionally, we learned via personal communication that the authors of \cite{GR} were aware of Theorem \ref{th:d+1}. Our paper provides the first write up of this result. 

On that note, we mention one interesting difference between our approach and those in \cite{GR} and \cite{BHT}. Throughout the paper, we use having a Borel asymptotic separation index of 1 as our key background assumption, where most existing work has used the existence of what is sometimes called a ``toast'' structure, following \cite{toast}, which is a particularity nice witness to hyperfiniteness. The existence of toast can easily be seen to imply asymptotic separation index 1, but it is unclear whether the converse holds. See Problem \ref{prob:hyperfinite}.


\section{Borel asymptotic dimension and asymptotic separation index}\label{sec:dim}


In this section we will review the relevant definitions and facts from \cite{dim} regarding Borel asymptotic dimension and asymptotic separation index. 

We start with the discrete versions of these notions: Let $G$ be a locally finite graph on a set $X$, and $N$ a positive integer. Let $G^{\leq N}$ denote the \textit{distance $N$-graph} of $G$. This is the graph which sets two vertices adjacent if and only if there is a path from one to the other of length at most $N$.

The \textit{asymptotic separation index} of $G$, denoted $\asi(G)$ is the infimum over $s \in \omega$ such that the following holds: For every positive integer $N$, $X$ can be partitioned into $s+1$ sets, say $U_0,\ldots,U_s$, such that for each $0 \leq i \leq s$, the induced graph $G^{\leq N} \res U_i$ has only finite connected components. The \textit{asymptotic dimension} of $G$, denoted $\asdim(G)$, is defined similarly, except now the connected components of each $G^{\leq N} \res U_i$ are required to have a uniform (finite) bound on their diameter. 

 Let $G$ now be a Borel graph on a standard Borel space $X$. The \textit{Borel asymptotic separation index} and \textit{Borel asymptotic dimension} of $G$, denoted $\asi_B(G)$ and $\asdim_B(G)$ respectively, are defined exactly as their discrete counterparts were, except that now all the $U_i$'s are required to be Borel sets.
 
 Now let $\Gamma$ be a finitely generated group, and $a:\Gamma \curvearrowright X$ a free Borel action. It is easy to verify that none of the above numbers for the graph $G(a,S)$ depend on the choice of generating set $S$ for $\Gamma$. We can thus talk about the (Borel) asymptotic separation index and (Borel) asymptotic dimension of the action $a$ itself.
 
 We now list some facts. If $G$ has only finite connected components, then $\asi_B(G)$ is trivially 0 and its Borel and discrete combinatorics coincide, so until Section \ref{sec:Z^d} assume this is not the case. The following facts, which were referenced in Section \ref{sec:intro}, can be used to establish that Borel actions of finitely generated free abelian groups have Borel asymptotic separation index 1, and are also relevant to the discussion more broadly.
 
 \begin{theorem}[\cite{dim}]\label{th:poly}
 Let $\Gamma$ be a finitely generated group with polynomial growth rate for its Cayley graph(s). Then any free Borel action of $\Gamma$ has finite Borel asymptotic dimension. 
 \end{theorem}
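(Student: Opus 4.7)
The plan is to combine Gromov's theorem on polynomial growth with a Borel version of the Hurewicz-type fibering argument used classically to bound the asymptotic dimension of nilpotent groups.

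First, by Gromov's theorem, $\Gamma$ is virtually nilpotent, so it contains a finite-index torsion-free nilpotent subgroup $N$. The restriction of $a$ to $N$ is free and Borel, and its Schreier graph (for any choice of symmetric generating set for $N$) is quasi-isometric to $G(a,S)$ with a constant depending only on $[\Gamma:N]$. A Borel partition witnessing finite $\asdim_B$ for the $N$-action can be intersected with a Borel transversal for the $N$-cosets (easily chosen since $[\Gamma:N]$ is finite and $a$ is free) and the finitely many pieces repackaged to give a partition of comparable complexity for the $\Gamma$-action. So I may assume $\Gamma$ is torsion-free nilpotent and induct on its Hirsch length $h$.

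The base case $h=1$ is $\Gamma \cong \Z$. For any $N$, a standard Borel marker argument partitions each $\Z$-orbit into intervals of length in $[N, 2N]$; a second such partition shifted by $N$ along each orbit together with the first witnesses $\asdim_B(a) \leq 1$. For the inductive step, choose a central subgroup $Z \leq \Gamma$ with $Z \cong \Z$ and $\Gamma/Z$ nilpotent of Hirsch length $h-1$. A Feldman--Moore-type Borel selector produces a transversal $T \subset X$ for the $Z$-action, on which $\Gamma/Z$ then acts freely and Borel-ly. The inductive hypothesis gives, for each scale $N'$, a Borel partition of $T$ into $\asdim_B(\Gamma/Z)+1$ pieces whose $G^{\leq N'}\res T$-components have bounded size $R = R(N')$. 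Saturate each piece under $Z$, then subdivide each resulting ``$Z$-stack'' along the fiber direction using the base case at scale proportional to $NR$. The common refinement is a Borel partition of $X$ witnessing finiteness of $\asdim_B(a)$, with bound roughly $2(\asdim_B(\Gamma/Z)+1)-1$.

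The main obstacle is the Borel execution of the fibering step: one must choose the quotient-level scale $N'$ large enough that every $G^{\leq N}$-path in $X$ is confined to a single pulled-back cluster crossed with a single fiber interval. This reduces to comparing the Cayley metric on $\Gamma$ with the product of the Cayley metric on $\Gamma/Z$ and the canonical metric on $Z$, a comparison that is clean because $Z$ is central (so commutators with generators stay in a bounded region). Verifying that the transversal-lifting, the $Z$-saturation, and the common-refinement operations all preserve Borelness is where most of the technical bookkeeping concentrates, and is presumably the crux of the argument in \cite{dim}.
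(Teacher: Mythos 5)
First, a framing point: the paper does not prove Theorem \ref{th:poly} at all — it is imported verbatim from \cite{dim} — so there is no internal proof to compare against, and your proposal must stand on its own. It does not: there is a fatal gap at the inductive step. You ask a ``Feldman--Moore-type Borel selector'' to produce a Borel transversal $T \subset X$ for the action of the central subgroup $Z \cong \Z$, and then apply the inductive hypothesis to an induced free Borel action of $\Gamma/Z$ on $T$. But a free Borel action of $\Z$ admits a Borel transversal if and only if its orbit equivalence relation is smooth, and for the actions this theorem is meant to cover (the Bernoulli shift, anything with an invariant measure or a generically ergodic restriction) it is not — this is exactly the ergodicity phenomenon from \cite{KST} that the present paper itself invokes to rule out Borel $2$-colorings. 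Feldman--Moore produces a countable group of Borel automorphisms generating an equivalence relation; it never produces transversals for non-smooth relations. Without $T$, the quotient $X/Z$ is not a standard Borel space carrying a Borel $\Gamma/Z$-action, so the inductive hypothesis cannot even be stated, let alone applied. This is not technical bookkeeping to be deferred to ``the crux of the argument in \cite{dim}''; it is precisely the obstruction that separates Borel asymptotic dimension from its classical counterpart, and the arguments in \cite{dim} are organized specifically to avoid it: a normal subgroup $N$ is only ever accessed through the Borel data of the $N$-subaction living on $X$ itself together with the \emph{ordinary} (non-Borel) asymptotic dimension of the quotient group $\Gamma/N$ as a metric space, and the polynomial-growth result is obtained by covering arguments carried out directly in the graph metric on $X$ rather than by Gromov's theorem followed by a fibering induction over a quotient action.

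A secondary instance of the same mistake occurs in your finite-index reduction, where you again invoke ``a Borel transversal for the $N$-cosets'': Borel-selecting one point per $N$-orbit, or one $N$-orbit per $\Gamma$-orbit, is again a smoothness issue. That step is repairable — work with the Borel graph $G(a \res N, S_N)$ defined on all of $X$, whose components are the $N$-orbits and are uniformly coarsely dense in the corresponding $\Gamma$-orbits, and then use a Borel finite-union theorem together with coarse-equivalence invariance of $\asdim_B$ — but the repair illustrates the discipline your proposal violates throughout: never choose base points or orbit representatives, only ever manipulate Borel subsets of $X$. Your base case ($\asdim_B \leq 1$ for free $\Z$-actions via markers) is correct.
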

 
 \begin{theorem}[\cite{dim}]\label{th:asi=1}
 Let $G$ be a locally finite Borel graph with $\asdim_B(G) < \infty$. Then $\asi_B(G) = 1$.
 \end{theorem}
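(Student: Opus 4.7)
The core challenge is to strengthen the $\asdim_B$ hypothesis, which supplies Borel partitions of $X$ into $d+1$ pieces with uniformly bounded $G^{\leq N}$-components at any scale $N$, into the $\asi_B \leq 1$ conclusion, which asks only for a $2$-piece Borel partition with merely finite components. Two forms of slack make this plausible: the $\asi_B$ partition may depend on $N$, and its pieces need only have finite (not uniformly bounded) components. A multiscale construction producing genuinely non-uniform partitions appears to be the right tool.

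Fix $N$ and set $d := \asdim_B(G)$. The plan is to choose a rapidly increasing sequence of scales $N = K_0 < K_1 < K_2 < \cdots$ and, at each level $n$, invoke the asdim hypothesis to obtain a Borel partition $\mathcal{P}_n = \{V_0^n, \ldots, V_d^n\}$ whose $G^{\leq K_n}$-components are bounded by some $M_n$, arranging that $K_{n+1}$ dwarfs $M_n$ (say $K_{n+1} > 2 M_n + K_n$). The two-piece partition $X = U_0 \sqcup U_1$ will then be built as a Borel boolean combination of the $\mathcal{P}_n$, designed so that any potentially infinite monochromatic $G^{\leq N}$-component of $U_i$ created at one stage is broken up by absorbing a piece of the next-scale partition into the opposite side.

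One concrete attempt is the following greedy construction: start with $U_0^{(0)} := V_0^0$ (whose $G^{\leq N}$-components have size $\leq M_0$), and at stage $n+1$ set $U_0^{(n+1)} := U_0^{(n)} \cup (V_0^{n+1} \setminus U_0^{(n)})$, absorbing the ``$0$-th piece'' of the next partition into $U_0$. Define $U_0 := \bigcup_n U_0^{(n)}$ and $U_1 := X \setminus U_0$. The heuristic is that the rapid scale growth prevents $G^{\leq N}$-connections from chaining across stages: an infinite $U_1$-component would have to avoid $V_0^n$ for every $n$, and at a sufficiently large scale this avoidance contradicts the boundedness supplied by asdim; an infinite $U_0$-component would require linking many scale-$n$ pieces across disparate $n$'s, which the scale separation should also preclude.

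The main obstacle --- and the most delicate step --- is the rigorous verification that both $U_0$ and $U_1$ have only finite $G^{\leq N}$-components. The iteration can ``pile up'' contributions from many stages, and one must show that the separation condition $K_{n+1} \gg M_n$ in fact prevents $G^{\leq N}$-connection chains from accumulating into an infinite component. The clean version of the argument likely defines $U_0$ directly (rather than iteratively) as a boolean function of the signature $(i_n(x))_{n \in \omega}$, where $x \in V^n_{i_n(x)}$, after which the bounded-components property at each individual scale translates transparently into the finite-components property for the final two-piece partition.
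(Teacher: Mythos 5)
You should first note that the paper does not prove this statement at all: Theorem \ref{th:asi=1} is quoted from \cite{dim} as a black box, so there is no in-paper proof to compare against. Judged on its own terms, your proposal has the right general flavor --- a multiscale construction exploiting the slack between ``uniformly bounded components at every scale'' and ``merely finite components at one scale'' is indeed the engine of the known argument --- but the concrete construction you write down does not work, and the step you yourself flag as ``the most delicate'' is precisely where it breaks.

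Two specific problems. First, your greedy definition gives $U_0 = \bigcup_n V_0^n$, and there is no control on $G^{\leq N} \res U_0$: a $G^{\leq N}$-path may alternate between points of $V_0^0$ and points of $V_0^1 \setminus V_0^0$, say, and the bound $M_n$ only constrains paths that stay inside a single $V_0^n$; nothing prevents such a chain from being infinite (indeed nothing prevents $\bigcup_n V_0^n = X$). Second, the heuristic for $U_1$ fails whenever $d \geq 2$: a point avoiding $V_0^n$ for every $n$ merely lies in $V_1^n \cup \cdots \cup V_d^n$ for every $n$, and a union of two or more pieces of an asymptotic-dimension partition carries no component bound whatsoever, so no contradiction with the asdim hypothesis arises. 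Relatedly, your construction never uses the pieces $V_1^n,\ldots,V_d^n$ at all, which cannot be right: any correct proof must distribute all $d+1$ pieces (taken at suitably separated scales) between the two sides, and the verification rests on a union lemma of the form ``if $G^{\leq N}\res A$ has components of $G$-diameter at most $D$ and $G^{\leq 2N+D}\res B$ has finite components, then $G^{\leq N}\res(A\cup B)$ has finite components,'' iterated with carefully nested radii. That lemma, and the bookkeeping needed to iterate it, is the actual content of the theorem and is absent from the proposal.
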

 
 These next facts will sometimes allow us to recover Vizing's bound in the Baire measurable and measurable contexts, even for groups for which $\asi_B \neq 1$.
 
\begin{theorem}[\cite{CM},\cite{dim}]\label{th:category}
Let $G$ be a locally finite Borel graph on a Polish space $X$. Then there is a Borel $G$-invariant comeager set $X' \subset X$ such that $\asi_B(G \res X') = 1$.
\end{theorem}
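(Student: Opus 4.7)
The plan is to reduce to a single scale at a time. For each positive integer $N$, I will produce a Borel $G$-invariant comeager set $X_N \subseteq X$ together with a Borel partition $X_N = U_0^N \sqcup U_1^N$ such that every connected component of $G^{\leq N} \res U_i^N$ is finite for $i \in \{0,1\}$. The desired set is then $X' = \bigcap_N X_N$, which is still Borel, $G$-invariant, and comeager, and witnesses $\asi_B(G \res X') \leq 1$ (equality then following from the standing assumption that $G$ has an infinite component, which persists after removing a meager set). Since $G^{\leq N}$ is itself a locally finite Borel graph with the same connectedness relation as $G$, the remaining task for each fixed $N$ is to produce, on a comeager invariant Borel set, a Borel $2$-partition with finite monochromatic $G^{\leq N}$-components.

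The main tool is a Baire-generic construction of a Borel \emph{toast} for $G^{\leq N}$: a Borel sequence $\T_0,\T_1,\ldots$ of partitions of $X$ into finite $G^{\leq N}$-connected tiles such that for every vertex $x$ and every radius $r$, the $r$-ball of $x$ is eventually contained in a single tile. I would build the toast starting from the Hjorth--Kechris generic hyperfiniteness theorem, which yields on a comeager invariant Borel set an increasing sequence of Borel finite equivalence relations $F_k$ with $\bigcup_k F_k$ equal to the connectedness relation of $G$. Local finiteness of $G^{\leq N}$ lets me enlarge each $F_k$-class to its $G^{\leq N}$-neighborhood of a chosen radius (still finite) and then refine by passing to $G^{\leq N}$-connected components, producing Borel families of finite connected sets. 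Once a toast is available, a standard alternating / interior-versus-boundary assignment yields the $2$-partition: every monochromatic $G^{\leq N}$-component is engulfed by a single tile and hence is finite.

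The hard step is arranging, in a Borel way on a comeager set, that the $F_k$ grow \emph{quickly enough} to form a genuine toast: hyperfiniteness alone yields finite classes, not bounded-$G^{\leq N}$-diameter ones, and in general the two notions diverge. To close this gap one needs a further Baire-category refinement, arguing via Kuratowski--Ulam on $X \times 2^{\N}$ (viewing coordinates of $2^{\N}$ as choice parameters for cutoff levels of the $F_k$) that for each radius $r$ the set of vertices whose $r$-ball is eventually engulfed by an $F_k$-class is comeager, and then extracting a Borel selection on a comeager $G$-invariant subset. Threading these selections coherently across all $r$ and all scales $N$ to assemble a single toast is the technical heart of the argument; once the toast is in place, the $2$-partition follows routinely.
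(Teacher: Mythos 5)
First, note that the paper offers no proof of Theorem \ref{th:category}: it is imported from \cite{CM} and \cite{dim}, so there is nothing internal to compare your argument against, and it must be judged on its own terms. Your scale-by-scale reduction (fix $N$, work with $H=G^{\leq N}$, intersect the resulting comeager invariant sets over all $N$) is correct and is how any proof must begin. The problem lies in the second half.

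The ``toast'' you define --- Borel partitions $\T_k$ into finite $H$-connected tiles such that every ball is eventually contained in a single tile --- comes for free from generic hyperfiniteness: if $F_0\subseteq F_1\subseteq\cdots$ are finite Borel equivalence relations with union the connectedness relation, let $\T_k$ consist of the $H$-connected components of the $F_k$-classes; since $B_H(x,r)$ is a finite connected set each of whose points is eventually $F_k$-equivalent to $x$, it lies inside a single tile for all large $k$, for \emph{every} $x$ and $r$, with no further argument. So the step you flag as the technical heart (a Kuratowski--Ulam argument to make balls get engulfed) is a non-issue. The genuine gap is the step you call routine: this weak toast does \emph{not} yield the $2$-partition. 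Your structure is merely a repackaging of hyperfiniteness, and the implication ``hyperfinite $\Rightarrow\ \asi_B\le 1$'' is exactly what cannot be taken for granted without discarding a meager (or null) set --- that discarding is the entire content of Theorems \ref{th:category} and \ref{th:measure}. Concretely, the interior-versus-boundary assignment fails: if $U_1$ is the set of $x$ with $B_H(x,1)$ not contained in the $\T_0$-tile of $x$, then $H\res U_0$ does have finite components, but $U_1$ can be all of $X$ (e.g.\ when $\T_0$ is a domino tiling of a $\Z^2$-orbit), and recursing through the levels consumes a fresh color at each level. What a genuine toast provides, and what your construction lacks, is boundary control: tiles at level $k+1$ must engulf the $1$-neighborhoods of the lower-level tiles they meet (equivalently, boundaries at distinct levels are kept far apart), so that the union over $k$ of the level-$k$ boundaries still has finite $H$-components. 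Arranging \emph{that}, in a Borel way on a comeager invariant set, is where Baire category actually has to be spent, and your proposal contains no mechanism for it.
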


\begin{theorem}[\cite{CM}]\label{th:measure}
Let $G$ be a locally finite Borel graph on a standard Borel probability measure space $(X,\mu)$ such that the connectedness equivalence relation of $G$ is hyperfinite. Then there is a Borel $G$-invariant $\mu$-conull set $X' \subset X$ such that $\asi_B(G \res X') = 1$.
\end{theorem}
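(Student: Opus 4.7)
My plan is to exploit hyperfiniteness to construct, scale by scale, Borel $2$-partitions of most of $X$ with only finite monochromatic $G^{\leq N}$-components, and then to glue these into a witness for $\asi_B(G \res X') = 1$ on a conull $G$-invariant Borel $X'$ via a Borel--Cantelli argument. By hyperfiniteness I first fix an increasing sequence $F_0 \subseteq F_1 \subseteq \cdots$ of finite Borel equivalence relations with $\bigcup_n F_n = E_G$.

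For each $N$, continuity of measure lets me choose $n(N)$ so that the set
$X_N := \{x \in X : B_N(x) \subseteq [x]_{F_{n(N)}}\}$
satisfies $\mu(X_N) \geq 1 - 2^{-N}$; indeed, since $F_n \uparrow E_G$, at each $x$ the ball $B_N(x)$ is eventually contained in $[x]_{F_n}$. Inside each finite $F_{n(N)}$-class $C$, I would build a Borel ``tile-and-buffer'' partition into alternately $0$- and $1$-colored blocks of $G$-diameter on the order of $N$, arranged so that any two same-color blocks inside $C$ are $G$-separated by more than $N$. One way to organize this is to choose a much-smaller-scale index $k < n(N)$ and use the $F_k$-subclasses of $C$ as the atomic building blocks, alternating colors along a Borel ``coarse path'' through $C$ determined by a Borel order on an $F_k$-transversal; the block thickness and buffer separation are then controlled by the ratio of scales. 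Let $U_0^N, U_1^N$ denote the resulting Borel partition of $X$. For $x \in X_N$, the entire ball $B_N(x)$ lies in the single block containing $x$, so each $G^{\leq N}$-component of $U_i^N \cap X_N$ is confined to a single finite block.

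To assemble the scales, set $Z := \limsup_N (X \setminus X_N)$, which is $\mu$-null by Borel--Cantelli. Passing, if necessary, to an equivalent $E_G$-quasi-invariant Borel probability measure---available for hyperfinite $E_G$---the $E_G$-saturation $[Z]_{E_G}$ remains null, so $X' := X \setminus [Z]_{E_G}$ is a conull, Borel, $G$-invariant set. Each $x \in X'$ lies in $X_N$ for all sufficiently large $N$, so restricting $U_0^N, U_1^N$ to $X'$ witnesses $\asi_B(G \res X') \leq 1$; equality holds by the standing convention that $G$ does not have only finite components. The main obstacle is the Borel tile-and-buffer construction inside each $F_{n(N)}$-class---producing alternating-color blocks of controlled $G$-diameter and buffer separation uniformly and Borel-ly across classes---which is the combinatorial heart of the argument and is precisely where the hyperfinite structure is genuinely used; the measure-theoretic upgrade from $X \setminus Z$ to $X'$ is standard but requires care with the measure class.
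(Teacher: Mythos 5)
The paper does not actually prove this statement --- it is quoted from Conley--Miller \cite{CM} --- so there is no internal proof to compare against, and I assess your argument on its own terms. There is a genuine gap at the final assembly step. The definition of $\asi_B(G \res X') \leq 1$ demands, \emph{for each fixed $N$}, a Borel partition of \emph{all} of $X'$ into two sets whose $G^{\leq N}$-restrictions have only finite components. Your construction controls components only inside $U_i^N \cap X_N$: there, any two same-colored points at $G$-distance at most $N$ lie in a single $F_{n(N)}$-class, so components are finite. But $X' \not\subseteq X_N$ for any fixed $N$; you only arrange $X' \subseteq \liminf_N X_N$, so for each $N$ there remains a set $X' \setminus X_N$ of measure up to $2^{-N}$ on which nothing is controlled, and a $G^{\leq N}$-component of $U_i^N \cap X'$ can run off to infinity through those points. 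Borel--Cantelli shows each point is \emph{eventually} in $X_N$, but ``eventually'' varies with the point, whereas the quantifier you need is ``for each $N$, for all points of $X'$ simultaneously.'' Absorbing this exceptional set is precisely the hard part of the theorem; the known proofs handle it by an iterated, multi-scale construction for each fixed $N$ (the leftover of one scale is dealt with at the next, with the measure guaranteeing almost-everywhere termination), not by a single scale plus Borel--Cantelli.

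Two smaller points. First, the ``tile-and-buffer'' partition you flag as the combinatorial heart is both unnecessary and, as specified, generally impossible: inside a finite $F_{n(N)}$-class \emph{any} partition whatsoever has finite $G^{\leq N}$-components, so the buffering does no work in your argument; and blocks of $G$-diameter $O(N)$ with same-colored blocks more than $N$ apart would force the block-adjacency graph to be bipartite, which already fails for large grids (this is essentially why $\asdim(\Z^d) = d$ rather than $1$). Second, an arbitrary $\mu$ need not be \emph{equivalent} to a quasi-invariant measure; the standard repair is to run the Borel--Cantelli estimates against $\nu = \sum_n 2^{-n-1} (g_n)_* \mu$ for a countable generating family $(g_n)$ of the connectedness relation, which is quasi-invariant and satisfies $\mu \ll \nu$, so that $\nu$-nullity of $Z$ yields $\mu$-nullity of its saturation. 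That repair is routine; the missing treatment of $X' \setminus X_N$ is not.
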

 
Finally, we mention a problem in this context which appears to be open, and which was mentioned at the end of Section \ref{sec:intro}.

\begin{prob}\label{prob:hyperfinite}
Let $G$ be a locally finite Borel graph with $\asi_B(G) = 1$. Is The connectedness equivalence relation of $G$ hyperfinite?
\end{prob}

In \cite{dim}, this is shown under the stronger assumption $\asdim_B(G) < \infty$.


\section{Degree plus one colorings}\label{sec:d+1}


In this section we prove Theorem \ref{th:d+1} and give some examples where it cannot be improved. 

\subsection{Proof of Theorem \ref{th:d+1}}

Let $G$ be a graph on a set $X$. An \textit{injective $G$-ray} is an injective infinite sequence $x_0,x_1,\ldots \in X$ such that $(x_i,x_{i+1}) \in G$ for all $i \in \omega$. Given a set $U \subset X$ and an $n \in \omega$, let us write $B(U,n)$ for the ball of radius $n$ around $U$. That is, the set of all points $x \in X$ whose path distance to $U$ is less than or equal to $n$. For $m \in \omega$, let us write $A(U,n,m) = B(U,m)-B(U,n)$. One can check that if $G$ and $U$ are Borel, these sets are always Borel.

The following two lemmas together explain how having a Borel asymptotic separation index of 1 will help us to construct edge colorings.

\begin{lemma}\label{lem:rainbow}
Suppose $G$ is a locally finite Borel graph on a space $X$ with $\asi_B(G) = 1$, and $d \in \omega$. Then we can find pairwise disjoint Borel sets $V_1,\ldots,V_d \subset X$ such that any injective $G$-ray contains infinitely many edges within each $V_i$.
\end{lemma}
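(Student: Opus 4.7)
The plan is to apply $\asi_B(G) = 1$ with a single parameter $N$ chosen much larger than $d$, giving a Borel partition $X = U_0 \sqcup U_1$ with every $G^{\leq N}$-component of $U_0$ and $U_1$ finite (call these ``clusters''), and then to build $V_1, \ldots, V_d$ cluster-by-cluster.

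First I would note the easy half: any injective $G$-ray must contain infinitely many ``within-cluster'' edges, that is, edges whose endpoints lie in a common cluster of some $U_i$. Otherwise, past some index every edge of the ray would cross between $U_0$ and $U_1$, forcing perfect alternation in the tail; then the $U_0$-vertices of the tail are pairwise at $G$-distance $2$, hence $G^{\leq N}$-adjacent within $U_0$ (for $N \geq 2$), so they are confined to a single finite $G^{\leq N}$-cluster---a contradiction.

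Inside each finite cluster $C$, using a Borel linear ordering of $X$ to pick a canonical root $r_C \in C$, I would Borel-ly form a BFS tree of $G \res C$ rooted at $r_C$ and record the depth function $\delta_C : C \to \omega$. Setting
\[
V_i^C \;:=\; \{v \in C : \lfloor \delta_C(v)/2 \rfloor \equiv i - 1 \pmod d\}, \qquad V_i := \bigsqcup_C V_i^C
\]
(ranging over $U_0$- and $U_1$-clusters) gives pairwise disjoint Borel sets. Since BFS makes depth $1$-Lipschitz along $G$-edges inside $C$, an edge with both endpoints in $V_i^C$ must have its two depths in a single pair $\{2k, 2k+1\}$ with $k \equiv i - 1 \pmod d$, while a within-cluster sub-path of the ray that traverses $\geq 2d$ distinct depths will contribute such an edge in every $V_i^C$.

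The main obstacle will be verifying that each $V_i$ collects infinitely many ray edges: a within-cluster sub-path staying at constant depth could contribute only to a single color class. My plan is to exploit the freedom to take $N$ large: the $G^{\leq N}$-separation between same-side clusters means that to transition between two different clusters in the same $U_i$, the ray must make a long ($\geq N - 1$ vertex) excursion into the opposite side, producing a long within-cluster sub-path there whose depth profile spans many values. A pigeonhole on the depth profiles of such forced long sub-paths should then yield edges in each $V_i^C$ infinitely often. Should this depth argument prove too coarse, an alternative is to apply $\asi_B(G) = 1$ at $d$ increasing scales $N_1 \ll \cdots \ll N_d$ and define $V_i$ as the ``$U_0$''-piece at scale $i$ intersected with the ``$U_1$''-pieces at every smaller scale; disjointness is automatic, and the ray property can be verified scale by scale.
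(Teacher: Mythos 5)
Your opening move (one application of $\asi_B(G)=1$ at a single large scale $N$, plus the observation that a ray cannot alternate between $U_0$ and $U_1$ forever) matches the paper's, and that part is sound. But the core of the construction has a genuine gap, which you yourself flag and do not close. An injective ray's excursion inside a single finite cluster $C$ carries no lower bound on the spread of the BFS depth $\delta_C$: the level sets of a BFS tree rooted at an arbitrary $r_C$ can be large and $G$-connected, so a long within-cluster subpath may sit at constant depth and contribute edges to only one class $V_i^C$. Nothing about the $G^{\leq N}$-separation between same-side clusters forces the depth profile to span $\geq 2d$ values, so the concluding ``pigeonhole on depth profiles'' has nothing to bite on. The fallback is also broken: the partitions $X=U_0^{(j)}\sqcup U_1^{(j)}$ obtained at different scales $N_1\ll\cdots\ll N_d$ are completely unrelated to one another, so $V_i=U_0^{(i)}\cap\bigcap_{j<i}U_1^{(j)}$ can be empty for $i\geq 2$ (e.g.\ if $U_0^{(2)}\subseteq U_0^{(1)}$), and even when nonempty there is no reason a ray must contain an edge lying simultaneously inside a cluster at every one of the $i$ scales.

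The repair is to anchor your Lipschitz ``depth'' function to $U_0$ globally rather than to an arbitrary root of each cluster: this is what the paper does. Take $N=4d+1$, let $V_i$ be the annulus of points at path distance $2i-1$ or $2i$ from $U_0$, and let $W$ be the set of points at distance $>2d$ from $U_0$. A ray avoiding $W$ would have every vertex within $2d$ of $U_0$, and the nearby $U_0$-points of consecutive ray vertices are within $2d+1+2d=N$ of each other, hence all in one finite $G^{\leq N}\res U_0$-cluster --- contradicting local finiteness, since the ray then supplies infinitely many points within distance $2d$ of a finite set. The ray must also meet $U_0$ (else it lies in one finite $U_1$-cluster). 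Since $\textnormal{dist}(\cdot,U_0)$ changes by at most $1$ along each edge, travelling from distance $0$ to distance $>2d$ forces, for each $i$, a consecutive pair at distances $2i-1$ and $2i$, i.e.\ an edge inside $V_i$. This is exactly the intermediate-value argument your BFS-depth idea was reaching for, but it only works because the base set of the distance function is $U_0$ itself, which the ray provably cannot stay close to.
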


\begin{proof}
Since the tail of an injective $G$-ray is still an injective $G$-ray, it suffices to prove this with ``infinitely many edges'' replaced by ``an edge''. 

Let $X = U_0 \sqcup U_1$ be a Borel partition of $X$ witnessing $\asi_B(G) = 1$ for $N = 4d+1$. That is, such that the graphs $G^{\leq N} \res U_0$ and $G^{\leq N} \res U_1$ both have only finite connected components. Set $V_i = A(U_0,2(i-1),2i)$ for each $1 \leq i \leq d$. Also set $W = X \setminus B(U_0,2d) \subset U_1$. As was mentioned above, these are all still Borel. Let us now show these $V_i$'s work.

Let $x = (x_n)$ be an injective $G$-ray. We first show that $x$ contains some point from $W$. Suppose not. Then for each $n$, there is a $y_n \in U_0$ whose path distance to $x_n$ is less than or equal to $2d$. Now for each $n$, going from $y_n$ to $x_n$, then to $x_{n+1}$, then to $y_{n+1}$, produces a path from $y_n$ to $y_{n+1}$ of length at most $2d+1+2d=N$, so all the $y_n$'s are in the same $G^{\leq N} \res U_0$-connected component. By definition of $U_0$, then, the set $\{y_n \mid n \in \omega\}$ is finite. But now $x$ gives infinitely many points within distance $2d$ of this set, contradicting local finiteness.

Since $W \subset U_1$, the above shows in particular that $x$ contains some point from $U_1$. Similarly, $x$ must contain some point from $U_0$. Let $x_n \in W$ and $x_m \in U_0$. We can assume $m < n$. Now the path distance from $x_i$ to $U_0$ changes by at most 1 each time $i$ is incremented by 1. Thus, since $x_m$ has distance 0 from $U_0$ and $x_n$ has distance greater than $2d$ from $U_0$, for each $1 \leq i \leq d$, there must be some $m < j < n$ such that $x_{j}$ and $x_{j+1}$ have distance $2i-1$ and $2i$ respectively from $U_0$. Then $(x_j,x_{j+1})$ is an edge of $x$ contained in $V_i$, as desired.

\end{proof}

\begin{lemma}\label{lem:Z}
Let $a:\Z \curvearrowright X$ be a free Borel action of $\Z$, and $S$ the usual generating set for $\Z$. Suppose $V \subset X$ is a Borel set with the property that every injective $G(a,S)$-ray contains infinitely many edges within $V$. Then there is a Borel 3-edge coloring of $G(a,S)$, say with the colors $1,2$, and $3$, such that the color 3 only occurs on edges contained within $V$.
\end{lemma}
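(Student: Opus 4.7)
My plan is to construct a Borel matching $E_3$ of $G(a, S)$ whose edges all lie within $V$ and which meets every orbit infinitely often in each direction. I will color $E_3$ with color $3$ and then Borel $2$-color the remaining edges. Because a matching that is infinite in each direction on a line partitions the complementary edge set into finite paths, and because free Borel $\Z$-actions come with a canonical successor map, the $2$-coloring is straightforward: on each finite path I take the $\Z$-leftmost vertex $y$ of the component (well-defined and Borel, since the component is finite) and color a remaining edge $(x,x+1)$ with color $1$ or $2$ according to the parity of $x - y$; adjacent edges then clearly receive different colors, and a $E_3$-edge (color $3$) is distinct from its neighbors as well.

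The bulk of the work is producing $E_3$, which I handle via a case analysis on each orbit $\ell$ according to the shape of $V \cap \ell \subset \Z$. For each \emph{finite} maximal $V$-run $[a, a+k-1]$ on $\ell$ with $k \geq 2$, I include the alternate edges $(a,a+1), (a+2,a+3), \ldots$ in $E_3$; these are pairwise non-adjacent, and by the hypothesis that infinitely many $V$-edges appear in each direction on $\ell$, infinitely many such runs appear in each direction to contribute to $E_3$. If $V \cap \ell$ has a semi-infinite left tail $(-\infty, a]$ with $a+1 \notin V$, then $a$ is Borel-definable as the unique $V$-vertex with $a+1 \notin V$ and $a - n \in V$ for every $n \geq 0$; using $a$ as anchor, I put the alternate edges $(a-1, a), (a-3, a-2), \ldots$ into $E_3$. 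The symmetric rule handles a semi-infinite right tail. Finally, if $V \cap \ell$ is all of $\ell$, I abandon the matching approach on this orbit and instead invoke the standard fact that every free Borel $\Z$-action admits a Borel $3$-edge-coloring (equivalently, a Borel $3$-coloring of the Schreier graph of its shifted action, whose Borel chromatic number is at most $3$); since every edge on such an orbit is a $V$-edge, this use of color $3$ is permitted.

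Each of these cases is Borel at the orbit level and the pieces patch together into a Borel $3$-edge-coloring with color $3$ used only on $V$-edges. The main subtlety I anticipate is precisely this case split: a cleaner unified rule would be desirable, but $V$ can cut each orbit into finite runs, semi-infinite tails, or the whole line, and each topology demands its own anchor for the parity rule defining $E_3$. I note that the asymptotic-separation-index hypothesis plays no direct role in this lemma; the relevant structure is already packaged into the hypothesis on $V$ and only the successor map of the $\Z$-action is needed to carry out the anchoring.
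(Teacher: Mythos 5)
Your proof is correct, and its skeleton is the same as the paper's: a Borel matching of $V$-edges that recurs infinitely often in both directions along every orbit gets color $3$, and the leftover finite paths are $2$-colored. The difference is in how the matching is produced. The paper obtains it in one line by applying the Kechris--Solecki--Todorcevic theorem on Borel maximal independent sets to the edge graph of $G(a,S)\res V$: maximality together with the hypothesis on $V$ automatically forces the matching to be bi-infinite on each orbit, with no case analysis on the shape of $V\cap\ell$. Your explicit construction, anchored at the endpoints of maximal $V$-runs, is more hands-on and makes Borelness transparent without citing that theorem --- except in the all-of-$V$ case, where the ``standard fact'' you invoke is itself a KST-type statement (the line graph of a disjoint union of Borel $\Z$-lines has Borel chromatic number at most $3$), so you have not really avoided it. One small imprecision: infinitely many $V$-edges in a given direction does not by itself yield infinitely many \emph{finite} runs in that direction, since that direction could instead carry a semi-infinite tail; the sentence claiming this should be read as a statement about the union over all your cases, and since you treat tails separately the conclusion that $E_3$ is bi-infinite on each orbit does hold. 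Your parity-anchored $2$-coloring of the finite leftover paths, using the leftmost vertex of each component, is a fine replacement for the paper's device of choosing the least $2$-coloring in a fixed Borel linear order.
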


\begin{proof}
A well known theorem from \cite{KST} (Proposition 4.2) states that for locally finite Borel graphs, it is always possible to find Borel maximal independent sets. Apply this to the edge graph of $G(a,S) \res V$ to get a maximal set of edges $A$ contained within $V$ such that no two edges in $A$ share a vertex. Give all the edges in $A$ the color 3.

By maximality and the defining condition for the set $V$, these edges occur infinitely often in both directions along each $\Z$-orbit. Thus, the connected components of $G(a,S) \setminus A$ are all simply finite paths. These can of course be 2-colored with the colors 1 and 2, and since they are all finite, this can be done in a Borel fashion. (For example, by fixing a Borel linear order on the space of 2-edge colored finite subgraphs of $G(a,S)$, and picking the least 2-coloring in this order for each path above.)
\end{proof}

The proof of Theorem \ref{th:d+1} is now easy:

\begin{proof}
By hypothesis, we may write $S = S_0 \sqcup S_1$, where $S_0$ contains all of our generators of infinite order, and $S_1$ contains all of our elements of even (finite) order. For each $\gamma \in S_1$, the connected components of the graph $G(a,\{\gamma^{\pm 1} \})$ are all even (finite) cycles, or just single edges if $\gamma$ has order 2. Thus they may be 2-colored in a Borel fashion as in the previous proof (or just 1-colored if $\gamma$ has order 2). Thus we may Borel $|S_1|$-color the edges in $G(a,S_1)$.

It now suffices to Borel $|S_0|+1$-color the edges in $G(a,S_0)$ using a disjoint set of colors. Let $S_0 = \{\gamma_1^{\pm 1},\ldots,\gamma_d^{\pm 1}\}$, so that $|S_0| = 2d$. We will use the color set $\{1,\ldots,2d+1\}$. Let $V_1,\ldots,V_d \subset X$ be the sets from Lemma \ref{lem:rainbow}. Since $\langle \gamma_i \rangle \cong \Z$ for each $i$, by Lemma \ref{lem:Z} we may for each $i$ color the edges in $G(a,\{ \gamma_i^{\pm 1} \})$ using the colors $2i-1$, $2i$, and $2d+1$ such that the color $2d+1$ only appears on edges contained within $V_i$. Since the $V_i$'s are pairwise disjoint, this does not cause any color conflicts, so we are done.
\end{proof}

We now wish to see that we can extend Theorem \ref{th:d+1} to finite extensions. To do this, it will be helpful to slightly expand our notion of graph to allow for multiple edges.

Let $\Gamma$ be a group and $S \subset \Gamma$ a finite symmetric multi-set of generators for it not containing 1. That is, the elements of $S$ can appear with some multiplicity. With $a:\Gamma \curvearrowright X$ a Borel free action as before, let $G(a,S)$ denote the Borel-multigraph where now points $x$ and $y$ are connected with one edge for each $\gamma \in S$ such that $\gamma \cdot x = y$. Edge colorings can be defined as before. Of course, if $x$ and $y$ have multiple edges between them, those all need different colors in an edge coloring.

Observe that Theorem \ref{th:d+1} still holds, with the same proof, if $S$ is allowed to be a multi-set. This allows us to prove:

\begin{corollary}\label{cor:d+1}
Let $a:\Gamma \curvearrowright X$ be a free Borel action of a marked group $(\Gamma,S)$ on a standard Borel space $X$ with Borel asymptotic separation index 1. Suppose $\Delta \leq \Gamma$ is a finite normal subgroup of $\Gamma$, and that for each $\gamma \in S \setminus \Delta$, the image of $\gamma$ in the quotient $\Gamma/\Delta$ does not have odd order. Then $\chi_B'(G(a,S)) \leq |S|+1$.
\end{corollary}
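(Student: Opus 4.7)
My plan is to pass to the quotient action $\bar a: \Gamma/\Delta \curvearrowright X/\Delta$, apply the multiset version of Theorem \ref{th:d+1} to get a Borel coloring of the between-$\Delta$-orbit edges, and then Vizing-color the within-$\Delta$-orbit edges by reusing exactly one color from the quotient palette, keeping the total at $|S|+1$ colors. The main obstacle is the color-reuse step; it goes through only because the pulled-back quotient coloring has the special feature that the set of colors it uses at a vertex depends only on that vertex's $\Delta$-orbit.

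To set up the quotient: since $\Delta$ is finite and normal, $X/\Delta$ is a standard Borel space and $\bar a$ is free and Borel. Let $\bar S$ denote the symmetric multiset in $\Gamma/\Delta$ consisting of the images of $S \setminus \Delta$; by hypothesis no element of $\bar S$ has odd order. Letting $M$ be the (finite) maximal $S$-word length of an element of $\Delta$, one verifies $\asi_B(G(\bar a, \bar S)) \leq 1$ by restricting a Borel partition of $X$ witnessing $\asi_B(G(a,S))=1$ at scale $N+M$ to a Borel transversal for $\Delta \curvearrowright X$. The multiset version of Theorem \ref{th:d+1} then yields a Borel $(|S\setminus\Delta|+1)$-edge coloring $c$ of $G(\bar a, \bar S)$ with palette $\{1,\ldots,|S\setminus\Delta|+1\}$. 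Pulling back, I define $c_1$ on $G(a, S \setminus \Delta)$ by coloring each $\gamma$-edge at $x$ with the color of the corresponding $\bar\gamma$-edge at $\bar x$; this is a proper Borel edge coloring. Crucially, since each quotient edge has $|\Delta|$ equally-colored preimages, the set $C(\bar x) \subset \{1,\ldots,|S\setminus\Delta|+1\}$ of colors $c_1$ uses at $x$ has size $|S\setminus\Delta|$ and depends only on $\bar x$. Every $\Delta$-orbit $O$ therefore has a uniquely determined missing color $x^*(O) \in \{1,\ldots,|S\setminus\Delta|+1\}\setminus C(O)$.

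It remains to color the within-orbit edges: on each $\Delta$-orbit these form a copy of the finite $|S\cap\Delta|$-regular Cayley (multi)graph $H$ of $\Delta$ with generators $S \cap \Delta$, so $\chi'(H) \leq |S\cap\Delta|+1$ by Vizing. Fix once and for all a Vizing coloring of $H$ with abstract colors $\{\star, 1', \ldots, |S\cap\Delta|'\}$. Using a Borel transversal for $\Delta \curvearrowright X$ to identify each orbit $O$ with $\Delta$, apply this abstract coloring and relabel $\star \mapsto x^*(O)$ and $i' \mapsto |S\setminus\Delta|+1+i$ to obtain a Borel coloring $c_2$ of $G(a, S \cap \Delta)$ taking values in $\{x^*(O)\} \cup \{|S\setminus\Delta|+2,\ldots,|S|+1\}$ on orbit $O$. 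The combined coloring of $G(a,S)$ uses palette $\{1,\ldots,|S|+1\}$, and at any $x$ the $c_1$-colors lie in $C(\bar x) \subset \{1,\ldots,|S\setminus\Delta|+1\}\setminus\{x^*(\bar x)\}$ while the $c_2$-colors lie in $\{x^*(\bar x)\}\cup\{|S\setminus\Delta|+2,\ldots,|S|+1\}$; these two sets are disjoint, so the $|S|$ edges incident to $x$ receive $|S|$ distinct colors, yielding the desired Borel $(|S|+1)$-coloring.
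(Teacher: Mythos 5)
Your proposal is correct and follows essentially the same route as the paper: split $S$ into $S\cap\Delta$ and $S\setminus\Delta$, apply the multiset version of Theorem \ref{th:d+1} to the quotient action $\Gamma/\Delta \curvearrowright X/\Delta$ and pull back, Vizing-color the finite $\Delta$-orbit components, and save one color via the observation that each $\Delta$-orbit is incident to only $|S\setminus\Delta|$ of the $|S\setminus\Delta|+1$ quotient colors. The only (cosmetic) difference is that the paper first amalgamates to $|S|+2$ colors and then recolors the edges of one fixed Vizing color to the orbit's missing quotient color, whereas you assign the missing color directly.
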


\begin{proof}
Write $S = S_0 \sqcup S_1$, where $S_1 = S \cap \Delta$. Consider first the graph $G(a,S_1)$. Its connected components are all contained within $\Delta$-orbits, and therefore finite. They are also $|S_1|$-regular, so by the classical Vizing's Theorem and the argument from the final paragraph of Lemma \ref{lem:Z}, we may find a Borel $(|S_1|+1)$-edge coloring of this graph, say using the color set $C$.

Now let $\overline{X}$ be the Borel space of $\Delta$-orbits. Then $a$ induces a free Borel action, say $\overline{a}$, of $\Gamma/\Delta$ on $\overline{X}$. Let $\overline{S_0}$ be the multi-set which results from considering the images of the elements of $S_0$ in $\Gamma/\Delta$. By the last paragraph before the statement of the corollary, we may find a Borel $(|S_0|+1)$-edge coloring using a disjoint set of colors from $C$, say $C'$, of $G(\overline{a},\overline{S_0})$. Now, given an edge $(x,\gamma\cdot x)$ in $G(a,S_0)$, give it the color received by the edge $(\Delta \cdot x, \Delta \gamma \cdot x)$ in $G(\overline{a},\overline{S_0})$ corresponding to $\gamma$. This results in a Borel $(|S_0|+1)$-edge coloring of $G(a,S_0)$ since whenever $x,y$ are distinct vertices in the same $\Delta$-orbit, the edges $(x,\gamma \cdot x)$ and $(y,\gamma \cdot y)$ share no vertices. 

Amalgamating these two colorings gives us a Borel $(|S|+2)$-edge coloring of $G(a,S)$, but we can remove a color: Fix a color $c$ from $C$. For each $\Delta$-orbit, say $E \in \overline{X}$, $E$ has degree $|S_0|$ in the graph $G(\overline{a},\overline{S_0})$, and therefore is incident to only $|S_0|$ colors in our coloring of that graph. By our construction, it is still incident to only those colors in the resulting coloring of $G(a,S_0)$. Thus, we may pick a color $c'$ from $C'$ which $E$ does not meet. Then we may change the color of any edge within $E$ colored $c$ to $c'$. This process maintains Borel-ness, and results in an edge coloring not using $c$, hence a ($|S|+1$)-edge coloring.
\end{proof}

The following variant of Corollary \ref{cor:d+1} will be used in Section \ref{sec:Z^d}:

\begin{corollary}\label{cor:d}
Using the notation from the proof of Corollary \ref{cor:d+1}, \ suppose $\chi'(G(a,S_1)) = |S_1|$ (That is, the Cayley graph of $(\Delta,S_1)$ is degree-edge colorable) and $S_1 \neq \emptyset$. Then $\chi_B'(G(a,S)) = |S|$.
\end{corollary}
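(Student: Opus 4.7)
The plan is to mimic the proof of Corollary \ref{cor:d+1} while saving two colors rather than one: one because the new hypothesis lets me use only $|S_1|$ colors on the torsion part, and one more by a local swap that exploits $S_1 \neq \emptyset$. Note that the lower bound $\chi_B'(G(a,S)) \geq |S|$ is automatic from the maximum degree, so only the upper bound requires work.

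First I would, exactly as at the end of the proof of Lemma \ref{lem:Z}, produce a Borel $|S_1|$-edge coloring of $G(a,S_1)$ on a color set $C$ of size $|S_1|$: each component is a finite copy of $\cay(\Delta, S_1)$, properly $|S_1|$-edge colorable by hypothesis, and a Borel selection handles all components uniformly. In parallel, as in Corollary \ref{cor:d+1}, I would apply the multi-set version of Theorem \ref{th:d+1} to the quotient action $\overline{a}: \Gamma/\Delta \curvearrowright \overline{X}$ with generating multi-set $\overline{S_0}$ to get a Borel $(|S_0|+1)$-edge coloring of $G(\overline{a}, \overline{S_0})$ on a color set $C'$ disjoint from $C$, and lift it to $G(a, S_0)$ in the same orbit-by-orbit way as there. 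Amalgamating yields a Borel $(|S|+1)$-edge coloring of $G(a,S)$. To eliminate one further color, I would fix $c^* \in C$ (available since $S_1 \neq \emptyset$) and, for each $\Delta$-orbit $E \in \overline{X}$, let $c'(E) \in C'$ be the unique color missing at the vertex $E$ in the quotient coloring (unique because $E$ has degree $|S_0|$ in $G(\overline{a}, \overline{S_0})$). I would then recolor every edge of $G(a,S_1) \cap (E \times E)$ originally colored $c^*$ by the color $c'(E)$. The selection $E \mapsto c'(E)$ is Borel, so this is a Borel modification, and the resulting coloring avoids $c^*$ entirely, hence uses at most $|S|$ colors.

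The main obstacle is verifying that this recoloring is still a proper edge coloring. Fix $x \in E$. By the lifting construction, the $|S_0|$ edges at $x$ in $G(a, S_0)$ carry precisely the $|S_0|$ colors of $C'$ that the quotient coloring assigns to the $|S_0|$ multi-edges at $E$, so $c'(E)$ never appears at $x$ among $G(a,S_0)$-edges. In the original $|S_1|$-coloring of $G(a,S_1)$ each color of $C$ appears exactly once at $x$, so after the swap $x$ sees each color of $C \setminus \{c^*\}$ once and sees $c'(E)$ at most once (only via the former $c^*$-edge at $x$, if any). Two recolored $c^*$-edges cannot share a vertex because the original coloring was proper. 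Thus $x$ sees $|S|$ distinct colors, the coloring is proper, and the proof is complete.
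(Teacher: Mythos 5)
Your proof is correct and takes essentially the same route as the paper's: the paper simply says to repeat the proof of Corollary \ref{cor:d+1} with the color set $C$ now of size $|S_1|$ (using the new hypothesis on $\cay(\Delta,S_1)$) and to perform the same final color-elimination step, which is exactly where $S_1 \neq \emptyset$ is needed. Your explicit verification that the recoloring remains proper is a detail the paper leaves implicit, but the argument is the same.
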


\begin{proof}
Repeat the proof of Corollary 1, but now where the color set $C$ has size $|S_1|$. The condition $S_1 \neq \emptyset$ is needed for the final step in the proof where we fix a color from $C$.
\end{proof}

Corollary \ref{cor:d+1} also generalizes a previous result of this author, which was proved using similar ideas \cite{W21}: 

\begin{corollary}\label{cor:2end}
Let $a:\Gamma \curvearrowright X$ be a free Borel action of a marked group $(\Gamma,S)$ with two ends. Then $\chi_B'(G(a,S)) \leq |S|+1$.
\end{corollary}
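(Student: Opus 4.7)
The plan is to deduce this from Corollary \ref{cor:d+1} by invoking the classical structure theorem for finitely generated groups with two ends: any such group $\Gamma$ is virtually $\Z$, in the sense that it fits in a short exact sequence $1 \to \Delta \to \Gamma \to Q \to 1$ where $\Delta$ is a finite normal subgroup and the quotient $Q$ is isomorphic either to $\Z$ or to the infinite dihedral group $D_\infty \cong \Z \rtimes \Z/2$. So I would begin by citing (or recording) this structure theorem, and fix such a $\Delta \trianglelefteq \Gamma$.

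Next I would check that the hypotheses of Corollary \ref{cor:d+1} hold for $a$, $S$, and this $\Delta$. For the Borel asymptotic separation index, note that $\Gamma$ is virtually $\Z$ and hence has linear (in particular polynomial) growth, so Theorem \ref{th:poly} gives $\asdim_B(a) < \infty$, and then Theorem \ref{th:asi=1} upgrades this to $\asi_B(G(a,S)) = 1$. For the condition on generators, I need to show that every $\gamma \in S \setminus \Delta$ has image of non-odd order in $Q$. If $Q \cong \Z$, then the image of $\gamma$ is either trivial (impossible, since $\gamma \notin \Delta$) or of infinite order, which is not odd. If $Q \cong D_\infty$, then every non-identity element of $Q$ has order $2$ or $\infty$, again never odd. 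In both cases the hypothesis of Corollary \ref{cor:d+1} is met.

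Applying Corollary \ref{cor:d+1} then yields $\chi_B'(G(a,S)) \leq |S|+1$, as desired. The only real content is the structure theorem for two-ended groups, which is what makes this a corollary rather than an independent argument; everything else is a mechanical verification of the hypotheses already discussed in the preceding sections. There is no serious obstacle to overcome once that structure theorem is in hand.
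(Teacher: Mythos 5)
Your proposal is correct and follows essentially the same route as the paper: invoke the structure theorem that two-ended groups are finite extensions of $\Z$ or $D_\infty$, note that no nontrivial element of either quotient has odd order, obtain $\asi_B = 1$ from linear growth via Theorem \ref{th:poly} (and Theorem \ref{th:asi=1}), and apply Corollary \ref{cor:d+1}. Your write-up is just a more explicit verification of the hypotheses than the paper's one-line version.
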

\begin{proof}
Two ended groups are always finite extensions of $\Z$ or the infinite dihedral group $D_\infty$, neither of which have any odd order elements. Furthermore a Borel asymptotic separation index of 1 is automatic by Theorem \ref{th:poly} since $\Gamma$ has linear growth rate, though this also follows from earlier work of Miller \cite{2end}.
\end{proof}

Similarly, Corollary \ref{cor:d+1} implies Vizing's bound for any action of an abelian group:

\begin{corollary}
Let $a:\Gamma \curvearrowright X$ be a free Borel action of an abelian marked group $(\Gamma,S)$. Then $\chi_B'(G(a,S)) \leq |S|+1$.
\end{corollary}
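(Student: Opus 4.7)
The plan is to recognize this as a direct application of Corollary \ref{cor:d+1} with the torsion subgroup playing the role of $\Delta$. Since $\Gamma$ is a finitely generated abelian group (finitely generated because it admits the finite generating set $S$), the structure theorem gives us $\Gamma \cong \Delta \times \Z^d$, where $\Delta$ is the torsion subgroup of $\Gamma$ and is finite. Because $\Gamma$ is abelian, every subgroup is normal, so $\Delta \trianglelefteq \Gamma$ meets the hypotheses of Corollary \ref{cor:d+1} on the subgroup side.

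Next I would verify the two hypotheses of Corollary \ref{cor:d+1}. For the asymptotic separation index: $\Gamma$ is abelian and finitely generated, hence has polynomial growth rate, so Theorem \ref{th:poly} gives $\asdim_B(a) < \infty$, and then Theorem \ref{th:asi=1} yields $\asi_B(G(a,S)) = 1$. For the generator hypothesis, consider any $\gamma \in S \setminus \Delta$. The quotient $\Gamma/\Delta \cong \Z^d$ is torsion-free, and the image of $\gamma$ in this quotient is nonzero (since $\gamma \notin \Delta$), hence has infinite order. In particular, this image does not have odd (finite) order, so the condition of Corollary \ref{cor:d+1} is satisfied vacuously for elements of $S \setminus \Delta$.

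Applying Corollary \ref{cor:d+1} now gives $\chi_B'(G(a,S)) \leq |S| + 1$, completing the proof. There is essentially no obstacle here; the main content is simply noting that finitely generated abelian groups have the required structure (finite torsion subgroup with torsion-free quotient) and polynomial growth, so both hypotheses of Corollary \ref{cor:d+1} are automatic. The proof is a few lines and mainly serves to record that Corollary \ref{cor:d+1} subsumes the abelian case cleanly, with the ``$+1$'' being sharp in general (as Theorem \ref{th:Z^d} will show in detail for the Bernoulli shift).
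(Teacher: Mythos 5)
Your proposal is correct and follows essentially the same route as the paper: apply the structure theorem to write $\Gamma$ as a finite extension of $\Z^d$, get $\asi_B = 1$ from polynomial growth via Theorems \ref{th:poly} and \ref{th:asi=1}, and note that nontrivial elements of the torsion-free quotient have infinite (hence not odd) order, so Corollary \ref{cor:d+1} applies. Your write-up just spells out slightly more detail than the paper's two-line argument.
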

\begin{proof}
By the fundamental theorem of finitely generated abelian groups, $\Gamma$ is a finite extension of some $\Z^d$. This implies it has polynomial (degree $d$) growth rate, and $\Z^d$ does not have any odd order elements.
\end{proof}

Note that the combination of Corollary \ref{cor:2end} and Theorem \ref{th:Z^d} actually render this Corollary redundant. It is still nice to state here, though, as Theorem \ref{th:Z^d} will require somewhat more work. 

We now pause briefly to address the measurable and Baire measurable situations. If $G$ is a Borel graph on a Polish space $X$, we denote by $\chi_{BM}'(G)$ the minimum of $\chi_B'(G \res X')$ as $X'$ ranges over all Borel comeager $G$-invariant subsets of $X$. If $X$ is instead equipped with a Borel probability measure $\mu$, $\chi_\mu'(G)$ is defined similarly. Note that these numbers are both lower bounds for $\chi_B'(G)$. Now, Theorems \ref{th:category} and \ref{th:measure} have the following obvious consequences:

\begin{corollary}
If we replace $\chi_B'$ with $\chi_{BM}'$, Theorem \ref{th:d+1} and Corollary \ref{cor:d+1} hold for Borel actions on Polish spaces without any assumptions on asymptotic separation index. 

Similarly in the measurable setting for actions with hyperfinite orbit equivalence relation. 
\end{corollary}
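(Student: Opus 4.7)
The plan is to reduce each claim to a direct application of the already-proven Borel version by passing to an appropriate invariant Borel subspace. Let $a:\Gamma \curvearrowright X$ be a free Borel action on a Polish space satisfying the hypotheses of Theorem \ref{th:d+1} (respectively, Corollary \ref{cor:d+1}), and write $G = G(a,S)$. In the Baire measurable case, I would apply Theorem \ref{th:category} to obtain a Borel $G$-invariant comeager set $X' \subset X$ such that $\asi_B(G \res X') = 1$. Because $a$ is free, the connectedness relation of $G$ coincides with the $\Gamma$-orbit equivalence relation, so $X'$ is in fact $\Gamma$-invariant, and the restriction $a \res X'$ is a free Borel action of $\Gamma$ on $X'$ whose Schreier graph is exactly $G \res X'$.

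Next I would observe that the hypotheses on the orders of elements of $S$ (in Theorem \ref{th:d+1}) and on the finite normal subgroup $\Delta$ (in Corollary \ref{cor:d+1}) depend only on the marked group $(\Gamma,S)$ and not on the particular action, so they are automatically inherited by $a \res X'$. Applying Theorem \ref{th:d+1} or Corollary \ref{cor:d+1} to $a \res X'$ then produces a Borel $(|S|+1)$-edge coloring of $G \res X'$, which by definition of $\chi_{BM}'$ witnesses $\chi_{BM}'(G) \leq |S|+1$.

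The measurable case is entirely analogous, with Theorem \ref{th:measure} in place of Theorem \ref{th:category}. The assumption that the orbit equivalence relation of $a$ is hyperfinite is precisely what Theorem \ref{th:measure} requires (again using that the connectedness relation of $G$ equals the orbit equivalence relation under freeness), and it yields a Borel $G$-invariant $\mu$-conull $X' \subset X$ on which $\asi_B(G \res X') = 1$; the rest of the argument proceeds verbatim. Since no new combinatorics is involved, there is essentially no obstacle here; the only point worth checking is that restricting to a $G$-invariant subset preserves the group-theoretic hypotheses of the target theorems, and the remarks above handle this.
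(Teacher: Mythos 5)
Your proposal is correct and is precisely the argument the paper has in mind: the paper states this corollary as an ``obvious consequence'' of Theorems \ref{th:category} and \ref{th:measure} without writing out a proof, and the intended reasoning is exactly your restriction to the comeager (resp.\ conull) invariant Borel set on which $\asi_B = 1$, followed by an application of Theorem \ref{th:d+1} or Corollary \ref{cor:d+1}. Your additional checks (that $X'$ is $\Gamma$-invariant by freeness and that the group-theoretic hypotheses pass to the restricted action) are the right points to verify and present no difficulty.
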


\subsection{Examples of tightness}

Let us now consider when the ``$+1$'' in Theorem \ref{th:d+1} cannot be removed. It is well known that there are free Borel actions of $\Z$ with its usual generating set which cannot be even Baire measurably 2-edge colored \cite{KST} (page 11, essentially), and so the bound in Theorem \ref{th:d+1} is certainly sometimes tight. It would be reasonable, however, to chalk this up to the fact that 2-colorings are very rigid. For example, the following fact shows that the situation can sometimes be very different for graphs of max degree $>2$:

\begin{theorem}[\cite{KM}]
If $G$ is an acyclic, $d$-regular Borel graph on a Polish space and $d > 2$, then $\chi_{BM}'(G) = d$.
\end{theorem}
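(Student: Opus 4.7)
Proof plan: Apply Theorem \ref{th:category} to pass to a Borel $G$-invariant comeager subset $X'$ on which $\asi_B(G \res X') = 1$; it suffices to construct a Borel proper $d$-edge-coloring of $G \res X'$. Since $G$ is acyclic it is bipartite, and K\"onig's theorem classically gives $\chi'(G) = d$, so the task is to promote this to a Borel construction on (a further comeager refinement of) $X'$.

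Proceed by induction on $d \geq 2$, with the key lemma being a ``Borel K\"onig's theorem'': every acyclic $d$-regular Borel graph on a Polish space with $\asi_B = 1$ admits a Borel perfect matching $M$ after passing to a comeager invariant subset. Given such $M$, the graph $G \setminus M$ is a $(d-1)$-regular acyclic Borel graph with $\asi_B = 1$, so by the inductive hypothesis it admits a Borel $(d-1)$-edge coloring, which combined with a fresh color for $M$ yields a Borel $d$-edge coloring. The base case $d = 2$ amounts to Baire measurably $2$-edge coloring a disjoint union of two-way infinite paths, which can be handled by standard arguments for free Borel $\Z$-actions on Polish spaces after restricting to a further comeager invariant set.

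To prove the key lemma, start with a Borel maximal matching $M$, obtained by applying the Kechris-Solecki-Todorcevic maximal independent set theorem to the line graph of $G$. Let $U$ be the resulting Borel set of unmatched vertices. Because $G$ is $d$-regular and bipartite, Hall's condition is classically satisfied with slack, so every vertex of $U$ lies on an $M$-augmenting path. Invoke $\asi_B(G \res X') = 1$ to choose a Borel partition $X' = U_0 \sqcup U_1$ such that each $G^{\leq N} \res U_i$ has only finite connected components, for a suitably large scale $N$. Within each finite $G^{\leq N} \res U_i$-piece, Borel-ly perform every augmenting swap available there, selecting each swap via a Borel linear order on finite matched subforests in the spirit of Lemma \ref{lem:Z}. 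Iterating this procedure at successively larger scales $N$, and alternating between the two sides, exhausts the unmatched vertices on a comeager invariant set and produces the required Borel perfect matching.

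The main obstacle is showing that this iterative augmenting procedure actually converges to a perfect matching in a Borel manner. The acyclicity of $G$ is crucial here: augmenting paths in a forest are uniquely determined by their endpoints and do not interact destructively between disjoint $G^{\leq N} \res U_i$-pieces, while the $\asi_B = 1$ decomposition confines each individual augmentation to a finite region. The delicate point is handling augmenting paths that must cross the $U_0$/$U_1$ boundary, for which one needs nested comeager refinements that successively absorb the remaining unmatched vertices; this bookkeeping is the technical heart of the argument and the step I would expect to require the most care.
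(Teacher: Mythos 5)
The paper does not actually prove this statement; it is quoted as a known theorem from the Kechris--Marks survey \cite{KM}, so there is no in-paper proof to compare against and your proposal must be judged on its own terms. It has a decisive gap: the base case of your induction is false. A $2$-regular acyclic Borel graph is a disjoint union of bi-infinite paths, and such graphs in general do \emph{not} admit Baire measurable $2$-edge colorings --- the paper cites exactly this fact (from \cite{KST}, page 11) in the sentence immediately preceding this theorem, and it is precisely why the hypothesis $d > 2$ appears in the statement. A generic free Borel action of $\Z$ already gives a $2$-regular acyclic graph with $\chi_{BM}' = 3$, and passing to a further comeager invariant set cannot help, since the obstruction is itself a Baire category (ergodicity) argument.

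This defect is not repairable by stopping the induction earlier. For $d = 3$, removing \emph{any} Borel perfect matching from a $3$-regular acyclic graph leaves a $2$-regular acyclic graph, i.e.\ a union of bi-infinite lines, and for an arbitrarily chosen matching that complement will again generically require three colors Baire measurably. So ``peel off Borel perfect matchings one at a time'' cannot work with the matchings chosen independently of one another: a Baire measurable $d$-edge coloring is equivalent to finding a perfect matching whose complement happens to be Baire measurably $(d-1)$-colorable, and your argument supplies no mechanism for selecting such a matching at the final step. (Your ``Borel K\"onig'' lemma --- Baire measurable perfect matchings for acyclic $d$-regular graphs with $d \geq 3$ --- is indeed true and nontrivial, following from work of Marks and Unger on Baire measurable matchings via the expansion of finite sets in trees of degree at least $3$ rather than from the augmenting-paths-at-increasing-scales scheme you sketch; but even granting it, the induction collapses before it reaches a valid base case.)
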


Thus it is natural to ask for examples in our situation for larger degrees. The following shows that Theorem \ref{th:d+1} remains tight for graphs of arbitrarily large degree, even if they are also assumed to be bipartite. It also provides one of the directions in the $d=1$ case of Theorem \ref{th:Z^d}.

\begin{prop}\label{prop:Zexample}
Let $(\Gamma,S)$ be a marked group with $\Gamma = \Delta \times \Z$ and $\Delta$ finite of odd order. Let $a:\Gamma \curvearrowright X$ be the usual action of $\Gamma$ on the free part of its Bernoulli shift. Then $G(a,S)$ does not admit a Baire measurable perfect matching (i.e, it does not admit a Borel perfect matching on a $\Gamma$-invariant comeager set). In particular $\chi_{BM}'(G(a,S)) = \chi_B'(G(a,S)) = |S|+1$.
\end{prop}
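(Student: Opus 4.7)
The approach will be proof by contradiction. Suppose that a Baire measurable perfect matching $M$ of $G(a,S)$ exists on some $\Gamma$-invariant comeager $X' \subseteq X$. Let $\overline X := X'/\Delta$; the $\Z$-factor of $\Gamma = \Delta \times \Z$ induces a free Borel action $\overline a$ of $\Z$ on $\overline X$, and up to passing to a further comeager invariant subset, $\overline X$ is isomorphic to the Bernoulli shift of $\Z$ with base $[0,1]^\Delta/\Delta$. The plan is to use $M$ to build a Baire measurable proper $2$-edge-coloring of the $\{\pm 1\}$-Schreier graph of $\overline a$; since the $\Z$-Bernoulli shift admits no such coloring (the classical obstruction from \cite{KST} cited above), this will yield the contradiction.

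For the construction, let $N := \max\{|n| : (\delta, n) \in S\}$. For each unordered edge $\{E, E+1\}$ of the $\{\pm 1\}$-Schreier graph on $\overline X$ (with $+1$ meaning application of $\overline a_{(e,1)}$), set $\phi(E, E+1) \in \{0,1\}$ to be the parity of the number of $M$-edges whose two endpoints lie in different components of the $\overline{a}$-orbit of $E$ after removing the gap between $E$ and $E+1$. This is well-defined without any choice of orientation on the orbit, and only $M$-edges whose endpoints lie in orbits in the finite window $\{E-N+1,\dots,E+N\}$ can contribute to the count, so $\phi$ is a finite count and Baire measurable in $M$.

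To verify that $\phi$ is a proper edge-coloring, compare $\phi(E,E+1)$ with $\phi(E+1, E+2)$ mod $2$. Any $M$-edge with neither endpoint in orbit $E+1$ either straddles both gaps or neither and contributes $0$ to the sum; any $M$-edge with exactly one endpoint in orbit $E+1$ crosses exactly one of the two gaps and contributes $1$. The number of edges of the second type equals $|\Delta| - 2k$, where $k$ is the number of $M$-edges internal to the orbit $E+1$, and this is odd because $|\Delta|$ is odd. Hence $\phi(E,E+1) \neq \phi(E+1, E+2)$, so $\phi$ is a proper Baire measurable $2$-edge-coloring of the $\{\pm 1\}$-Schreier graph on $\overline X$, contradicting \cite{KST}.

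For the ``in particular'' clause, observe that $G(a,S)$ is $|S|$-regular, so any $|S|$-edge-coloring decomposes it into $|S|$ perfect matchings; the nonexistence of a Baire measurable perfect matching thus forces $\chi'_{BM}(G(a,S)), \chi'_B(G(a,S)) \geq |S|+1$, while Corollary \ref{cor:d+1} supplies the matching upper bound (since $\Gamma/\Delta \cong \Z$ has no elements of odd order). I expect the main obstacle to be making the construction of $\phi$ genuinely Borel and checking the mod-$2$ cancellation when $S$ contains generators whose $\Z$-shift is larger than $1$; both are handled by the observation that $\phi$ depends only on which $\Delta$-orbit contains each endpoint of each $M$-edge, and that at most $N|\Delta|$ edges of $M$ can contribute to any given gap.
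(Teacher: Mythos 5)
Your proposal is correct and follows essentially the same route as the paper: both arguments extract a parity invariant from the matching $M$ projected to $\Delta$-orbits (using that each orbit meets an odd number of projected edges since $|\Delta|$ is odd) and derive a Baire measurable proper $2$-coloring of the induced $\Z$-line, contradicting the classical ergodicity obstruction from \cite{KST}. The only difference is bookkeeping — you count $M$-edges crossing a single gap to get an alternating edge-labeling, while the paper counts edges leaving a window $[0,n]\cdot y$ to the left to get a vertex $2$-coloring — and these are equivalent.
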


\begin{proof}
Suppose, to the contrary, there is a Borel $\Gamma$-invariant comeager set $X' \subset X$ and a Borel perfect matching $M \subset (G(a,S) \res X'
)$. 

Start by defining $\overline{a}:\Z = \Gamma/\Delta \curvearrowright \overline{X}$, $S_0$, and $\overline{S_0}$ exactly as in the proof of Corollary \ref{cor:d+1}. Also let $\overline{X'} \subset \overline{X}$ be the set of $\Delta$-orbits contained in $X'$.

Let $\overline{M} \subset G(\overline{a},\overline{S_0})$ be the submultigraph which includes an edge between distinct $\Delta$ orbits $y$ and $y'$ for each edge between $y$ and $y'$ in $M$. Since $\Delta$ has odd order, each orbit $y \in \overline{X'}$ must be contained in an odd number of edges in $\overline{M}$.

Let $n \in \omega$ be large enough so that $S \subset \Delta \times (-n,n)$. For any edge $(y,m \cdot y) \in G(\overline{a},\overline{S_0})$ with $m > 0$, define $y$ to be the \textit{left endpoint} of that edge and $m \cdot y$ to be the \textit{right endpoint}. For $y \in \overline{X'}$, define $F(y)$ to be the number of edges in $\overline{M}$ with a right endpoint in $[0,n] \cdot y$ and a left endpoint not in $[0,n] \cdot y$. Put another way, this is the number of edges in $\overline{M}$ which leave the set $[0,n] \cdot y$ to the left.

Let us compare $F(y)$ and $F(1 \cdot y)$. For an edge of $\overline{M}$ to be counted in $F(1 \cdot y)$ but not $F(y)$, it would have to either have $y$ has a left endpoint, or $(n+1) \cdot y$ as a right endpoint. Since $\overline{S_0} \subset (-n,n)$, though, no edge with $(n+1) \cdot y$ as a right endpoint can leave the set $[1,n+1] \cdot y$ to the left, and so this cannot occur. Thus our edge would have to have the form $(y,m \cdot y)$ for some $0 < m < n$, and conversely any edge with this form is counted in $F(1 \cdot y)$ but not $F(y)$. 

Similarly, for an edge of $\overline{M}$ to be counted in $F(y)$ but not $F(1 \cdot y)$, it would have to have $y$ as its right endpoint or $(n+1) \cdot y$ as its left endpoint.  The latter is absurd, though, and so our edge would have to have the form $(m \cdot y, y)$ for some $m < 0$. Once again, conversely any edge with this form is counted in $F(y)$ but not $F(1 \cdot y)$.

Now, $y$ must be the left or right endpoint of each edge in $\overline{M}$ which it belongs to. Let $L$ and $R$ be the number of such edges falling into these respective cases. By the above discussion, we can conclude $F(1 \cdot y) = F(y)  + L - R$. By an earlier comment, though, $L+R$ is odd, so $F(1 \cdot y) = F(y) + 1$ (mod 2). Thus, if we define $f(y)$ to be the mod 2 value of $F(y)$, $f$ is a 2-vertex coloring of $G(\overline{a},\{\pm 1\}) \res \overline{X'}$. It is also clearly Borel since $M$ was. This is well known to be impossible for nonmeager $X'$ by a basic ergodicity argument \cite{KST} (page 11), so we are done. 

Finally, the conclusions about the edge chromatic numbers follow from this and Corollary \ref{cor:2end}, since an $|S|$-edge coloring is a decomposition into perfect matchings.

\end{proof}

Note that this fails if we do not use the convention that $S$ must be finite. Indeed it is not hard to construct Borel perfect matchings for the Borel graph on $X$ above which connects any two points in the same $\Gamma$-orbit. 

\subsection{Open problems}

We end this section by listing interesting open problems related to Theorem \ref{th:d+1}.

First, for Schreier graphs, it seems to be open whether any of the assumptions of the theorem are necessary:

\begin{prob}
Is there any marked group $(\Gamma,S)$ and free Borel action $a:\Gamma \res X$ such that $\chi_B'(G(a,S)) > |S| + 1$?
\end{prob}

Natural candidates to resolve this question are the free groups with their usual generating sets. It follows from the results in \cite{M15} that the Schreier graphs of these marked groups do not necessarily admit Borel pefect matchings, ruling out Borel degree-edge colorings, but not much else seems to be known. (See Problem 5.37 in \cite{KM}.)

It is also unclear whether the presence of a group action is necessary:

\begin{prob}\label{prob:nogroup}
Let $G$ be a locally finite $d$-regular Borel graph with $\asi_B(G) = 1$. Is $\chi_B'(G) \leq d+1$?
\end{prob}

It may be necessary to add an assumption of bipartite-ness to get an affirmative answer to this problem as a replacement to Theorem \ref{th:d+1}'s prohibition of odd-order generators. This restricted version of the problem still seems to be open.\footnote{Addendum: While this paper was under review, an affirmative answer in the bipartite case was indeed found by Matthew Bowen and the author.} Note that by Theorem \ref{th:category}, an affirmative answer to Problem \ref{prob:nogroup} would imply that Vizing's theorem holds in complete generality in the Baire measurable setting.

Given Theorem \ref{th:d+1}, one way to make progress on Problem \ref{prob:nogroup} in the case where $d$ is even would be to find  so-called \textit{Schreier decorations} of $G$ which are Borel. These are orientations of the edges of $G$ along with edge colorings so that each vertex ends up with exactly one in and one out edge of each color, and so they essentially realize $G$ as the Schreier graph of some not-necessarily free group action. Some recent progress has been made in this area: See \cite{T21}, \cite{BHT}. Even more generally, it would be useful to simply find Borel ways of decomposing $G$ into graphs of max degree $2$.

Finally, given Proposition \ref{prop:Zexample} it is interesting to ask whether any groups other than finite extensions of $\Z$ can provide examples where the degree plus one bound is tight. 

\begin{prob}
Is there a marked group $(\Gamma,S)$ and a free Borel action $a:\Gamma \curvearrowright X$ satisfying the hypothesis of Theorem \ref{th:d+1} for which $\chi_B'(G(a,S)) = |S|+1$, and for which $\Gamma$ is not a finite extension of $\Z$? 
\end{prob}

See Question 38 in \cite{BHT} for a similar problem. Note that by Theorem \ref{th:Z^d}, the answer is ``No'' for abelian $\Gamma$.


\section{Degree colorings for abelian groups}\label{sec:Z^d}


In this section we prove Theorem \ref{th:Z^d}. In two situations the theorem asserts the non-existence of degree colorings: In the discrete setting when $d = 0$ and $\Delta$ has odd order, and in the Borel setting when $d = 1$ and $\Delta$ has odd order. We have already mentioned that the latter is covered by Proposition \ref{prop:Zexample}. The former is obvious since a finite graph with an odd number of vertices cannot admit a perfect matching. Therefore in this section we can focus on constructing degree colorings in the remaining cases.

Throughout, let $a$,$X$,$\Gamma$,$S$, $d$, and $\Delta$ be as in the statement of the theorem, unless otherwise stated. Note, though, that nothing in this section is specific to the Bernoulli shift. We only use that $a$ is a free Borel action of $\Gamma$.

\subsection{Free rank 0}

In this subsection we complete the proof of statement 1 of Theorem \ref{th:Z^d} by showing that if $\Gamma = \Delta$ is an even order finite abelian group, then $\chi'(G(a,S)) = |S|$. Note that this will imply the same for $\chi_B'(G(a,S))$, by the argument from the second half of Lemma \ref{lem:Z}. While it seems unlikely that this result is original, our coverage of it will introduce a general technique which will be useful for dealing with the torsion part when $d > 0$. We feel it is also simply nice to include for the sake of completeness. 

It suffices to consider the case when $a$ is the action of $\Delta$ on itself by multiplication. In this case our graph is called the \textit{Cayley} graph of $(\Delta,S)$. 

The following lemma is not specific to the case $d=0$.

\begin{lemma}\label{lem:quotient}
Let $\Delta' \leq \Gamma$ be a finite proper subgroup. Let $\overline{\Gamma} = \Gamma/\Delta'$, and then define $\overline{a}:\overline{\Gamma} \curvearrowright \overline{X}$, $S_0$, and $\overline{S_0}$ as in the proof of Corollary \ref{cor:d+1}. If $\chi'(G(\overline{a},\overline{S_0})) = |{S_0}|$, then $\chi'(G(a,S)) = |S|$. Likewise for the Borel edge chromatic numbers of these graphs.
\end{lemma}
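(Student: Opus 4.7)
The plan is to split $S = S_0 \sqcup S_1$ with $S_1 = S \cap \Delta'$, pull back the hypothesized $|S_0|$-edge coloring of the quotient $G(\overline{a}, \overline{S_0})$ to color the $S_0$-edges of $G(a, S)$, and then color the $S_1$-edges (which form a disjoint union of finite components, each living inside a single $\Delta'$-orbit) using a fresh palette of $|S_1|$ colors disjoint from the first. The union is then a proper $|S|$-edge coloring of $G(a, S)$.

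In detail, let $\overline{c}: G(\overline{a}, \overline{S_0}) \to C_0$ be the given proper edge coloring with $|C_0| = |S_0|$. For each edge $(x, \gamma \cdot x)$ in $G(a, S_0)$ (with $\gamma \in S_0$), assign the color that $\overline{c}$ gives to the edge-slot at $(\overline{x}, \overline{\gamma \cdot x})$ indexed by $\gamma$ as an element of the multiset $\overline{S_0}$. This is a proper edge coloring of $G(a, S_0)$ since at each $x$ the $|S_0|$ edges correspond bijectively to the $|S_0|$ distinct edge-slots at $\overline{x}$ in the quotient multigraph; moreover, it is $\Delta'$-equivariant along each orbit (the $\gamma$-edge at $\delta \cdot x$ gets the same color as the $\gamma$-edge at $x$ for every $\delta \in \Delta'$). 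Then, picking a fresh palette $C_1$ of size $|S_1|$ disjoint from $C_0$, I would properly edge-color each component of $G(a, S_1)$ using $C_1$; each such component is a finite subgraph of a single $\Delta'$-orbit, isomorphic as a multigraph to $\cay(\Delta', S_1)$. The amalgamation is proper because the $\Delta'$-orbit containing $x$ uses only colors in $C_0$ at $x$ from $S_0$-edges and only colors in $C_1 = C \setminus C_0$ at $x$ from $S_1$-edges.

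For the Borel version, the pullback coloring is Borel since both $\overline{c}$ and the quotient map $X \to \overline{X}$ are Borel. The componentwise coloring of $G(a, S_1)$ is made Borel by fixing a Borel linear order on $|S_1|$-edge-colored finite multigraphs and, for each $\Delta'$-orbit, selecting the least valid coloring in this order, exactly as in the closing argument of Lemma \ref{lem:Z}.

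The main obstacle is the last step: showing that each $\Delta'$-orbit (a copy of $\cay(\Delta', S_1)$) admits a proper $|S_1|$-edge coloring. This is not automatic for arbitrary $\Delta'$ (for instance, it fails when $\Delta' = \Z/3$ and $S_1 = \{\pm 1\}$, whose Cayley graph is a triangle), so the lemma must either be applied only when $\cay(\Delta', S_1)$ admits a $1$-factorization, or one must mix colors between $C_0$ and $C_1$ across $\Delta'$-orbits in a more delicate way. In the abelian setting of Section \ref{sec:Z^d}, the former can be arranged by invoking the classical fact that connected Cayley graphs of finite abelian groups of even order are $1$-factorable, and choosing $\Delta'$ accordingly; this flexibility is the ``general technique'' promised in the preamble to the subsection and what will allow the approach to be reused when $d > 0$ to handle the torsion part.
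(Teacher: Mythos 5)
Your first two steps --- lifting the quotient coloring to the $S_0$-edges and checking it is a proper Borel coloring of $G(a,S_0)$ --- match the paper exactly. But the point where you stall, namely coloring the $S_1$-edges with only $|S_1|$ fresh colors, is precisely where the content of the lemma lies, and neither of your proposed outs recovers the statement as written. The lemma assumes nothing about $\cay(\Delta',S_1)$ beyond $\Delta'$ being a finite proper subgroup, and it is applied in Subsection \ref{subsec:2} with $\Delta'=\Delta$ equal to the entire torsion part, which may well have odd order (e.g.\ $\Gamma=\Z/3\times\Z^2$ with $S_1=\{\pm1\}$ inside the $\Z/3$ factor, so that $\cay(\Delta',S_1)$ is your triangle); there is no freedom to ``choose $\Delta'$ accordingly,'' so restricting to the $1$-factorable case would break the proof of Statement 3 of Theorem \ref{th:Z^d}. (The classical fact you cite also requires even order and connectedness, and $S_1$ need not generate $\Delta'$.)

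The missing idea is your second alternative, carried out as follows: pay $|S_1|+1$ colors for the torsion part but recoup one color from the $S_0$ part. Fix a color $c\in C$ (possible since $\Delta'$ is proper, hence $S_0\neq\emptyset$). In a proper $|S_0|$-edge coloring of the $|S_0|$-regular multigraph $G(\overline a,\overline{S_0})$, the $c$-colored edges form a perfect matching, so the subgraph $G'$ consisting of $G(a,S_1)$ together with all $c$-colored lifted edges has components of the form: two adjacent $\Delta'$-orbits $y$ and $\gamma\cdot y$ with all their internal $S_1$-edges, joined by the matching $\{(x,\gamma\cdot x) : x\in y\}$. Take a fresh palette $C'$ of size $|S_1|+1$, color the edges inside $y$ by Vizing's theorem, transport that coloring to $\gamma\cdot y$ via $x\mapsto\gamma\cdot x$ (a graph isomorphism because $\Gamma$ is abelian), and note that each matching edge $(x,\gamma\cdot x)$ then sees the same $|S_1|$ colors at both endpoints, leaving one color of $C'$ free for it. The edges of $G(a,S)\setminus G'$ keep their colors from $C\setminus\{c\}$, for a total of $(|S_1|+1)+(|S_0|-1)=|S|$ colors; Borelness follows as in your last paragraph since the components of $G'$ are finite.
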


\begin{proof}
Let $S_1 = S \cap \Delta' = S \setminus S_0$. By hypothesis, we may find a $|S_0|$-edge coloring of $G(\overline{a},\overline{S_0})$, say using the set of colors $C$. As in the proof of Corollary \ref{cor:d+1}, we may lift this to a $|S_0|$-edge coloring of $G(a,S_0)$ by giving the edge $(x,\gamma \cdot x)$ for $x \in X$ and $\gamma \in S_0$ whichever color was given to the edge $(\Delta' \cdot x, \gamma \Delta' \cdot x)$ in $G(\overline{a},\overline{S_0})$ corresponding to $\gamma$.

Fix a color $c \in C$, which exists since $\Delta'$ is proper, so $S_0$ is nonempty. Consider the subgraph $G'$ of $G(a,S)$ consisting of $G(a,S_1)$ along with all the edges colored $c$ in the previous step. 

Since, in our coloring of $G(\overline{a},\overline{S_0})$, the set of edges colored $c$ gives a perfect matching, each connected component of our subgraph looks like the following: Two adjacent $\Delta$-orbits, say $y$ and $\gamma \cdot y$ for some $\gamma \in S_0$, with all the internal edges from $S_1$, along with all edges between them of the form $(x,\gamma \cdot x)$ for $x \in y$.

Fix a set $C'$ of $|S_1|+1$ colors disjoint from $C$. We now use $C'$ to edge color the above component. First, by Vizing's theorem, we may use $C'$ to color the edges within $y$. Now, for every such edge, say $(x,x')$, give the edge $(\gamma \cdot x, \gamma \cdot x')$ the same color. Since $\Gamma$ is abelian, this colors all edges within $\gamma \cdot y$ without conflict. Finally, for each $x \in y$, by this construction the set of colors of edges in $y$ meeting $x$ is the same as the set of colors of edges in $\gamma \cdot y$ meeting $\gamma \cdot x$. This set has size $|S_1|$, though, so we still have one free color, and can assign it to the edge $(x,\gamma \cdot x)$, completing our coloring.

Thus we can edge color $G'$ using $C'$, and we already have an edge coloring of $G(a,S) \setminus G'$ using $C \setminus \{c\}$, so unioning these gives a coloring using $|C|'+|C|-1 = |S_1|+1 + |S_0| - 1 = |S|$ colors as desired.

Finally, if our original coloring was Borel, our lift will still be Borel, and then our construction can be done in a Borel fashion since we only need to work with finite components. Again this uses the argument from Lemma \ref{lem:Z}.
\end{proof}

Statement 1 now follows:

\begin{proof}
Since $\Delta$ has even order, we may by the fundemental theorem of finite abelian groups find an index 2 subgroup $\Delta' \leq \Delta$. We now wish to apply Lemma \ref{lem:quotient} with this choice of $\Delta'$. We clearly can, though, since we will have $\overline{\Gamma} = \Z/2$, and so $G(\overline{a},\overline{S_0})$ will just consist of two points with $|S_0|$-many edges between them. (Recall we started with the Cayley graph of $(\Delta,S)$.)
\end{proof}

\subsection{Free rank $\geq 2$}\label{subsec:2}

In this subsection we complete the proof of Statement 3 of Theorem \ref{th:Z^d}. Applying Lemma \ref{lem:quotient} with $\Delta' = \Delta$, we reduce to the case $\Gamma = \Z^d$, but where we now allow multiplicity for the generators in $S$. For the rest of the subsection, assume $(\Gamma,S)$ has this form.

 For motivation, consider Figures \ref{fig:d+1} and \ref{fig:Z^d}. The former shows that we can think about the proof of Theorem \ref{th:d+1} as follows: For each $i$, the orbits of $\gamma_i$ are edge colored using mostly the colors $2i-1$ and $2i$, but with an occasional $2d+1$ thrown in to help align parity.
 
 The latter makes an analogy between this and our upcoming proof of Statement 3. For notational convenience, consider the case where $S = \{\pm e_i \mid 1 \leq i \leq d\}$ is the standard generating set, where $e_i = (0,\ldots,0,1,0,\ldots,0)$, with the $1$ in the $i$-th coordinate. We will still color the orbits of each $e_i$ using mostly the colors $2i-1$ and $2i$, but as before we will need to occasionally swap parity. Thanks to the structure of $\Z^d$, though, when $d \geq 2$ we will be able to do this by ``borrowing'' a color from some $e_j$-edge, $j \neq i$.

\begin{figure}
\centering
\includegraphics[width=0.8\textwidth]{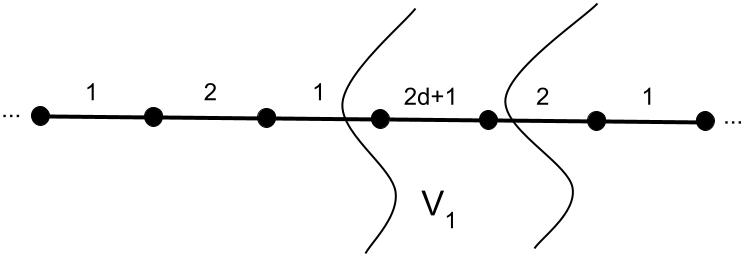}
\caption{\label{fig:d+1} The key idea in the proof of Theorem \ref{th:d+1}. The color $2d+1$ is used (within the region $V_1$) to ``swap parity'' along the orbits of $\gamma_1$.}
\end{figure}

\begin{figure}
\centering
\includegraphics[width=0.8\textwidth]{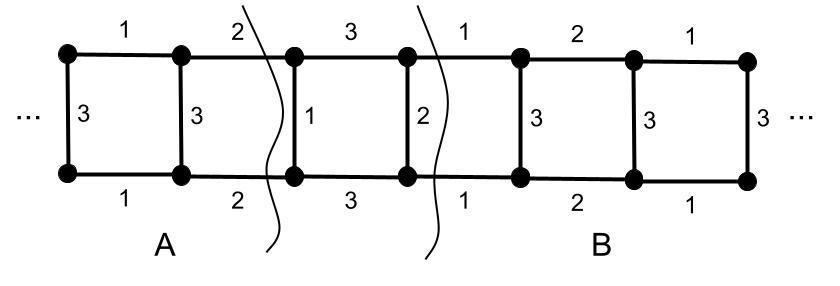}
\caption{\label{fig:Z^d} The key idea in the proof of Theorem 2. Now parity along the horizontal orbits is swapped without the use of an additional color. The regions $A$ and $B$ are labeled so that they can be referenced later.}
\end{figure}

\subsubsection{Borrowing colors}

We will now describe a general setup for using this ``borrowing'' idea. See Figure \ref{fig:general} for a visualization. Let $\gamma_1,\gamma_2 \in S$ such that $\langle \gamma_1,\gamma_2 \rangle \cong \Z^2$. That is, $\gamma_1$ and $\gamma_2$ are not scalar multiples of each other. Work in some finite subset $U$ of some $\Z^d$-orbit, with two disjoint subsets $A,B \subset U$. Let $S' = \{\pm \gamma_1, \pm \gamma_2\}$.

Let $\F$ denote the set of $\gamma_1$-orbits meeting $U$. Call two elements of $\F$ \textit{adjacent} (with respect to $\gamma_2$) if the action of $\gamma_2$ sends one to the other. Note that this makes sense since $\Z^d$ is abelian. Suppose $P$ is a partition of $\F$ into adjacent pairs.  Suppose $e = (x,\gamma_1 \cdot x)$ is a $\gamma_1$-edge in some $f \in \F$. Let $\epsilon \in \{\pm 1\}$ be the unique sign so that $\{f,\epsilon \gamma_2 \cdot f\} \in P$. Then let us call the edge $(\epsilon \gamma_2 \cdot x, \epsilon \gamma_2 \gamma_1 \cdot x)$ the \textit{parallel} edge to $e$.

In what follows, we will sometimes speak of edges as if they are sets of vertices, e.g, ``the distance between two edges" or ``these two edges are disjoint". The meaning of such statements will be the one resulting from identifying an edge $(x,y)$ with the set $\{x,y\}$.

\begin{figure}
\centering
\includegraphics[width=\textwidth]{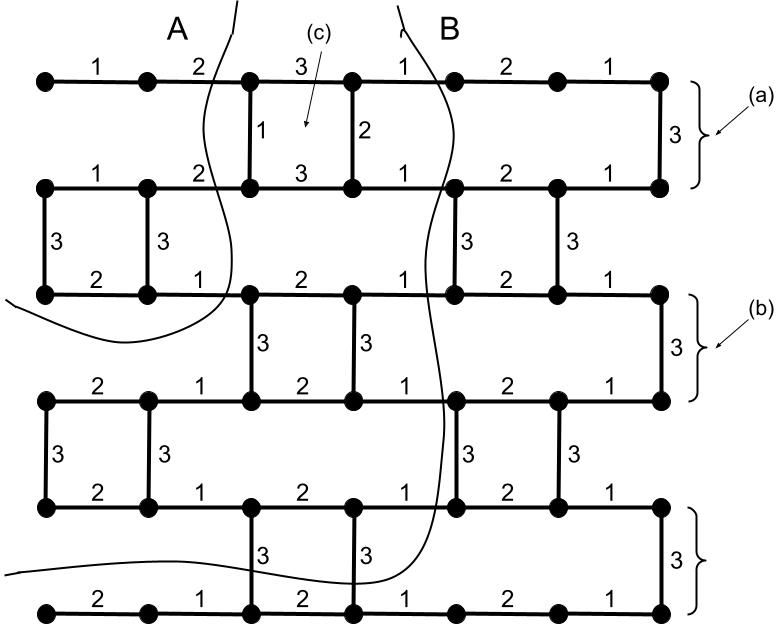}
\caption{\label{fig:general} A visualization of the setup and proof of Lemma \ref{lem:general}. The horizontal edges correspond to $\gamma_1$, and the vertical to $\gamma_2$. The edges pictured are those not given the color $4$. $A$ and $B$ are the regions indicated. The brackets show pairs in $P$. (c) shows the configuration of edges mentioned in condition 5 of the lemma statement. That configuration is used to swap parity for the element of $P$ labeled by (a), as shown. (b) labels an element of $P$ on which no parity swap is needed.}
\end{figure}

\begin{lemma}\label{lem:general}
Suppose we have a partial edge coloring $c:(G(a,S') \res U) \rightarrow \{1,2,3,4\}$ which satisfies the following properties. 
\begin{enumerate}
    \item The domain of $c$ consists of all $\gamma_2$-edges, and all $\gamma_1$-edges meeting $A \cup B$.
    \item $c$ gives all $\gamma_1$-edges the color $1$ or $2$, and all $\gamma_2$-edges the color $3$ or $4$.
    \item Parallel $\gamma_1$-edges are always given the same color.
    \item If $e,e'$ are two edges in a single element of $\F$ meeting $A$, they get the same color iff the distance between them is odd. Likewise for $B$.
    \item For every path from $A$ to $B$ consisting only of $\gamma_1$-edges, there is some edge $e$ on the path such that $e$ and its parallel edge, say $e'$, are both disjoint from $A \cup B$, and both $\gamma_2$-edges from $e$ to $e'$ have the color 3.  
\end{enumerate}
Then there is an (total) edge coloring $c':(G(a,S') \res U) \rightarrow \{1,2,3,4\}$ which agrees with $c$ on all edges meeting $A \cup B$.
\end{lemma}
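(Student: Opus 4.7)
The plan is to build $c'$ one pair $\{f_1,f_2\}\in P$ at a time: for each pair I will color the middle $\gamma_1$-edges (those disjoint from $A\cup B$) and possibly recolor in-pair $\gamma_2$-edges that are also disjoint from $A\cup B$, leaving every other edge alone. Since the modifications are confined to the pair---in particular, the $\gamma_2$-edges between $\{f_1,f_2\}$ and other pairs of $P$ are never touched---the constructions for different pairs do not interact.

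Fix a pair $\{f_1,f_2\}$ with $f_2=\epsilon\gamma_2\cdot f_1$. Conditions 3 and 4 say that the colored $\gamma_1$-edges of $f_1$ are governed by two $\{1,2\}$-alternations on the $\gamma_1$-orbit, one fitting the $A$-meeting edges and one fitting the $B$-meeting edges, which may or may not share the same parity, and that $f_2$ mirrors $f_1$ via the parallel correspondence. I partition the $\gamma_1$-edges of $f_1$ into maximal middle segments and handle each according to one of two cases.

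In the easy case---when a middle segment is bounded only by $A$, only by $B$, reaches the boundary of $U\cap f_1$, or is bounded by $A$ and $B$ with matching parities---I extend the $\{1,2\}$-alternation across the segment and copy the colors onto the parallel middle segment in $f_2$. In the hard case, the segment is bounded by $A$ and $B$ with mismatched parities; applying condition 5 to any $A$-to-$B$ $\gamma_1$-path through this segment yields an edge $e=(x,\gamma_1 x)$ with $e,e'$ disjoint from $A\cup B$ and with both connecting $\gamma_2$-edges colored $3$. I set $c'(e)=c'(e')=3$, extend the $\{1,2\}$-alternation on the two sides of $e$ independently, using the adjacent $A$- and $B$-boundaries, mirror onto $f_2$, and recolor the two $\gamma_2$-edges between $e$ and $e'$. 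The color for each of these two recolored $\gamma_2$-edges is forced: at each endpoint the four-distinct-colors requirement leaves exactly one option in $\{1,2\}$, since $e$ or $e'$ uses $3$, the adjacent middle $\gamma_1$-edge uses a known color from $\{1,2\}$, and the out-of-pair $\gamma_2$-edge was forced by properness of $c$ on $\gamma_2$-edges to be $4$ and remains so.

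The verification is routine case-checking of the four-distinct-colors requirement at the four vertices of the swap cycle $x,\gamma_1 x,\epsilon\gamma_2 x,\epsilon\gamma_2\gamma_1 x$. The main obstacle I anticipate is confirming that the colors forced at the $f_2$-endpoints of the recolored $\gamma_2$-edges agree with those prescribed by the mirrored $\{1,2\}$-alternation on $f_2$; this should fall out of the fact that, by condition 3 for $c$ and by explicit mirroring in the easy case, the $\{1,2\}$-colorings on $f_1$ and $f_2$ are parallel-synchronized, so the constraints at $\epsilon\gamma_2 x$ and $\epsilon\gamma_2\gamma_1 x$ reduce to the ones already handled at $x$ and $\gamma_1 x$.
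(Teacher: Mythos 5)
Your proposal is correct and follows essentially the same route as the paper's (much terser, picture-based) proof: extend the $\{1,2\}$-alternation pairwise along each element of $P$, and at each $A$-to-$B$ parity mismatch use the configuration guaranteed by condition 5 to give $e$ and $e'$ the color $3$ and recolor the two connecting $\gamma_2$-edges into $\{1,2\}$, with the consistency of the forced colors across the pair coming from condition 3 together with the mirroring. You have in fact supplied more detail than the paper does, notably the explicit four-vertex verification at the swap cycle and the observation that the out-of-pair $\gamma_2$-edges are already forced to be $4$.
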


\begin{proof}
Essentially a proof by picture; See Figure \ref{fig:general}. 

First, the move from $c$ to $c'$ will not change which edges get the color 4, so ignore those edges for the rest of the proof. We are left with a graph like the one in the figure. Fix a pair $\{f,f'\} \in P$, so that we need to fill in the colors of the edges in $f$ and $f'$ not meeting $A \cup B$. We would like to simply do this by continuing to alternate between the colors 1 and 2 on them, but we may run into parity issues.

By condition 4, this can only be the case along paths in $f$ or $f'$ between $A$ and $B$. By condition 5, though, whenever we have such a situation, we can ``borrow'' the color 3 to indeed do a parity swap if necessary. Item (a) in the figure shows an example where this is the case, and item (b) an example where it is not. 
\end{proof}

\subsubsection{The standard generators}

The remainder of Subsection \ref{subsec:2} will involve reducing our desired result to several concrete cases. All will be handled using similar overall strategies, and ultimately relying on Lemma \ref{lem:general}. The first of these will be when $S = \{\pm e_i \mid 1 \leq i \leq d\}$ is the usual generating set for $\Z^d$. Fix this particular $S$ for now. We will cover this case carefully to illustrate our general strategy.

We first set up some additional terminology. Let $U \subset U'$ be subsets of some fixed $\Z^d$-orbit $U''$, and let $c:(G(a,S) \res U') \rightarrow \{1,\ldots,2d\}$ an edge coloring. Let $x_0 \in U''$, $1 \leq i \leq d$, and $k,k'$ distinct colors. We say that $c$ \textit{follows the protocol $x_0$ for $e_i$ using $k$ and $k'$ on U} if for each edge $(x,e_i \cdot x)$ of $G(a,S) \res U'$ meeting $U$, $c$ gives this edge the color $k$ if $a_i$ is even and $k'$ otherwise, where $(a_1,\ldots,a_d) \in \Z^d$ is the unique element such that $(a_1,\ldots,a_d) \cdot x_0 = x$. Note for later that this definition makes sense for any $S$ containing $e_i$. Sometimes the ``on $U$'' will be omitted if $U$ is clear from context, which typically will be when $U = U'$. Likewise, the colors may be omitted if they are clear or if there is no need to name them.

Of course, given $x_1,\ldots,x_d \in U''$, it is always possible to $2d$-edge color a given subset of $U''$ following the protocol $x_i$ for $e_i$ using the colors $2i-1$ and $2i$ for each $i$. Also note that the choice of each $x_i$ only matters modulo the action of the subgroup $\langle 2 e_i \rangle$.

\begin{figure}
\centering
\includegraphics[width=0.4\textwidth]{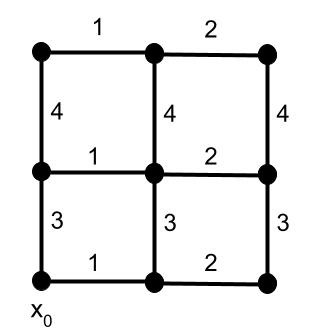}
\caption{\label{fig:protocol} A partial coloring for $d=2$ following the protocol $x_0$ for $e_1$ using the colors 1 and 2 and also following the protocol $x_0$ for $e_2$ using the colors 3 and 4. }
\end{figure}

It is clear that if $U$ and $V$ are two disjoint subsets of $U''$ and $c$ is a coloring of $G(a,S) \res (U \cup V)$ following the protocol $x_i$ for $e_i$ using the colors $2i-1$ and $2i$ for each $i$,  then $c$ can be extended to a coloring of all of $G(a,S) \res U''$ following those same protocols. We now need to see what to do when $U$ and $V$ follow different sequences of protocols. 

\begin{lemma}\label{lem:1transition}
Let $1 \leq i \leq d$. Suppose $U \subset U''$ and $c$ is an edge coloring of $G \res B(U,1)$ which for each $1 \leq j \leq d$ follows some protocol $x_j$ for $e_j$ using $2j-1$ and $2j$ on $A(U,0,1)$. Then we can extend $c$ to a $2d$-coloring of $G \res B(U,5)$ which still follows those protocols on $A(U,0,1)$, and which on $A(U,4,5)$ follows those protocols for each $j \neq i$, but follows protocol $e_i \cdot x_i$ for $e_i$ using $2i-1$ and $2i$.
\end{lemma}

\begin{proof}
First, color any edges in $B(U,5)$ meeting $A(U,4,5)$ or $A(U,0,1)$ as dictated our sequences of protocols. These two sequences agree about which protocols to use for $e_j$, $j \neq i$, so tentatively color all $e_j$-edges for $j \neq i$ in $A(U,0,5)$ as dictated by these protocols.

We need to fill in the colors for the remaining $e_i$ edges: those contained in $A(U,1,4)$. Since $d > 1$, fix some $j \neq i$. Observe that we can do this by applying Lemma \ref{lem:general} with $\gamma_1 = e_i$, $\gamma_2 = e_j$, $A = A(U,0,1)$, $B = A(U,4,5)$ and the colors 1,2,3, and 4 replaced with $2i-1,2i,2j-1$, and $2j$ respectively. All conditions other than 5 are immediately apparent. $P$ will pair up any two adjacent $e_i$ orbits whose connecting edges all get the color $2j-1$. That is, it will consist of pairs of the form $f,e_j \cdot f$ for which $f$ contains a point $(a_1,\ldots,a_d) \in x_0$ with $a_j$ even. Then, for condition 5, any edge $e$ such that both $e$ and its parallel edge are contained in $A(U,1,4)$ will work. 4 is large enough so that we are always guaranteed the existence of such an edge, so we are done. Note that the picture here will look like that in Figure \ref{fig:Z^d}.
\end{proof}

\begin{lemma}\label{lem:fulltransition}
Let $x_1,\ldots,x_d$, $y_1,\ldots,y_d \in U''$. Suppose $U \subset U''$ and $c$ is an edge coloring of $G(a,S) \res U$ which for each $1 \leq i \leq d$ follows protocol $x_i$ for $e_i$ using $2i - 1$ and $2i$. Then we can extend $c$ to a $2d$-coloring of $G \res B(U,4d+1)$ which still follows these protocols on $U$, but also for each $i$ follows protocol $x_i$ for $e_i$ using $2i - 1$ and $2i$ on $A(4d,4d+1)$.
\end{lemma}

\begin{proof}
This is an obvious induction using Lemma \ref{lem:1transition}.
\end{proof}

\begin{lemma}\label{lem:2sets}
Let $x_1,\ldots,x_d$, $y_1,\ldots,y_d \in U''$. Suppose $U,V \subset U''$, the $G(a,S)$-path distance between $U$ and $V$ is greater than $4d+1$, and $c$ is an edge coloring of $G \res (U \cup V)$ which on $U$, follows protocol $x_i$ for $e_i$ using $2i-1$ and $2i$ for each $i$, and likewise for $V$ and the $y_i$'s. Then we can extend $c$ to a $2d$-coloring of $U''$ which still follows those same protocols on $U$ and $V$ respectively.
\end{lemma}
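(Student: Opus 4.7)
The plan is a direct gluing argument using Lemma \ref{lem:fulltransition} to insert a transition region around $U$, after which the rest of the orbit is colored according to protocol $x_1$. The distance hypothesis is exactly what ensures these two pieces do not collide.

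First, I would apply Lemma \ref{lem:fulltransition} to the restriction of $c$ to $G(a,S) \res U$, choosing $x_1$ as the target second protocol. This extends $c$ on $U$ to a coloring of $G(a,S) \res B(U, 4d+1)$ that still follows $x_0$ on $U$ and now follows $x_1$ on the outer shell $A(U, 4d, 4d+1)$. The assumption that the path distance from $U$ to $V$ exceeds $4d+1$ guarantees $V \cap B(U, 4d+1) = \varnothing$, so this new coloring is compatible with $c \res V$; together they form a partial coloring on $B(U, 4d+1) \cup V$.

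Second, on the set $A(U, 4d, 4d+1) \cup V$ this partial coloring already conforms to the single protocol $x_1$ — on $A(U, 4d, 4d+1)$ by construction, and on $V$ by hypothesis. By the observation preceding Lemma \ref{lem:1transition}, any partial coloring that follows a given protocol extends to a total coloring of $G(a,S) \res U''$ following that protocol (indeed, the protocol deterministically assigns every edge color, so the extension is unique). Apply this extension, and then glue: keep the step-one coloring on $B(U, 4d)$, and take the step-two coloring on $U'' \setminus B(U, 4d)$. The resulting coloring is total on $G(a,S) \res U''$, still follows $x_0$ on $U$ (by step one), and still follows $x_1$ on $V$ (since step two follows $x_1$ throughout its domain).

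No substantial obstacle is expected here — this is a routine stitch of two colorings produced by previously established machinery. The only compatibility worth checking is on edges meeting the boundary shell $A(U, 4d, 4d+1)$ (including edges stretching outward into $A(U, 4d+1, 4d+2)$); this is automatic because both pieces of the construction conform to protocol $x_1$ in that region, and the protocol uniquely determines each edge's color.
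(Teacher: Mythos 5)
Your proposal is correct and follows the paper's own argument: extend $c$ across $B(U,4d+1)$ via Lemma \ref{lem:fulltransition} with target protocol $x_1$, then color the remainder of $U''$ by protocol $x_1$, with the distance hypothesis ensuring the pieces do not collide. The extra compatibility checks you spell out are implicit in the paper's two-line proof.
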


\begin{proof}
First, extend $c$ to $B(U,4d+1)$ as in Lemma \ref{lem:fulltransition}. Then color the rest of the edges in $U''$ following protocol $y_i$ for $e_i$ using $2i-1$ and $2i$ for each $i$.
\end{proof}

Now, thanks to the fact $\asi_B(G(a,S)) = 1$, Lemma \ref{lem:2sets} is enough:

\begin{lemma}\label{lem:std}
With $S$ the standard generating set, $\chi_B(G(a,S)) = |S| = 2d$.
\end{lemma}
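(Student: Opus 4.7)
The plan is to combine $\asi_B(G(a,S)) = 1$---which follows from Theorems~\ref{th:poly} and~\ref{th:asi=1} since $\Z^d$ has polynomial growth---with the protocol-and-transition machinery built up in Lemmas~\ref{lem:1transition}--\ref{lem:2sets}. The first step is to apply the definition of $\asi_B = 1$ with $N = 4d+1$ to obtain a Borel partition $X = U_0 \sqcup U_1$ in which every $G^{\leq 4d+1}$-component of each $U_i$ is finite.

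The essential structural point, and the reason for the value $N = 4d+1$, is that two distinct $G^{\leq N} \res U_i$-components inside a common $\Z^d$-orbit $U''$ are at $G$-distance strictly greater than $4d+1$: a shorter $G$-path between two such components, having both endpoints in $U_i$, would itself be a single edge of $G^{\leq N} \res U_i$ and thereby merge the components. So within each orbit $U''$ the finite components $C_1, C_2, \ldots$ of $U_0 \cap U''$ form a family of ``islands'' whose $(4d+1)$-neighborhoods are pairwise disjoint, and analogously for the components of $U_1 \cap U''$.

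Next, I would fix a Borel linear order on $X$ and, for each island $C$, let $\pi_C$ denote the protocol based at $\min C$. The coloring is then built orbit by orbit: on each island $C$ one colors the internal edges of $G(a,S) \res C$ following $\pi_C$, and in the $(4d+1)$-buffer annulus around $C$ one applies Lemma~\ref{lem:fulltransition} to interpolate between $\pi_C$ and the protocol needed on the outer boundary. Lemma~\ref{lem:2sets} already packages exactly this construction in the two-island instance. Since the $(4d+1)$-neighborhoods of distinct islands in the same $U_i$ are pairwise disjoint, the protocol transitions carried out in different buffers do not interfere with one another; and the whole procedure is Borel because the color of any given edge depends only on a bounded neighborhood of partition, order, and island data.

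The main obstacle I anticipate is coherence: a single orbit contains infinitely many islands, each advocating its own protocol, while Lemma~\ref{lem:2sets} as written only handles a pair of them; moreover $U_0$- and $U_1$-islands, though each pairwise well-separated within their own $U_i$, may abut each other. I would resolve this by keeping every protocol transition entirely inside the $(4d+1)$-buffer around a single $U_0$-island---choosing the ``outside'' protocol locally, e.g.\ from the nearest neighboring $U_0$-island---and handling the ``deep'' parts of $U_1 \cap U''$ that no $U_0$-buffer reaches by running the symmetric construction on the $U_1$-islands. Disjointness of the buffers within each $U_i$ is what prevents these local procedures from conflicting and assembles them into a single Borel $2d$-edge-coloring of $G(a,S)$.
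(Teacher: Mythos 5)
There are two problems here, one quantitative and one structural. The quantitative one: from $N=4d+1$ you only learn that distinct $G^{\leq N}\res U_i$-components are at $G$-distance greater than $4d+1$, and that makes their $2d$-neighborhoods disjoint, not their $(4d+1)$-neighborhoods --- two islands at distance $4d+3$ have heavily overlapping $(4d+1)$-buffers. To force the buffers apart you need $N>2(4d+1)$, which is why the paper runs the separation index at $N=8(d+1)$. This is easily repaired, but it matters because your entire gluing scheme lives inside those buffers.

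The real gap is that the coherence problem you flag in your last paragraph is the whole content of the lemma, and your proposed fix does not close it. Transitioning ``to the protocol of the nearest neighbouring $U_0$-island'' is not something Lemma \ref{lem:fulltransition} can deliver: that lemma interpolates from one protocol to one other protocol across a full annulus, whereas different arcs of the outer boundary of a single buffer will in general be nearest to different islands and hence demand different target protocols (nor is ``nearest'' even well defined). Running ``the symmetric construction on the $U_1$-islands'' just reproduces the identical problem with the roles exchanged: a single $U_1$-island can abut many $U_0$-islands and vice versa, so you are left with two interlocking families of islands, each carrying its own protocol, and no region in which to perform all the required pairwise transitions. The paper's resolution is a two-level construction that synchronizes protocols \emph{before} any gluing. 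Writing $X=U_1\sqcup V$ for the partition at $N=8(d+1)$ and $V_1=X\setminus B(U_1,4d+1)$ for the deep interior of $V$, the ray argument of Lemma \ref{lem:rainbow} together with K\"onig's lemma shows that $X\setminus V_1$ (not merely $U_1$) has only finite components, as does $X\setminus U_1=V$. Protocols are then assigned not per island but per component of these complements: all of $U_1$ inside one $(X\setminus V_1)$-component follows a single protocol, and all of $V_1$ inside one $V$-component follows a single protocol. Consequently each finite component $C$ of the buffer $V\setminus V_1$ meets $U_1$-material carrying one protocol and $V_1$-material carrying one protocol, separated by distance greater than $4d+1$, so Lemma \ref{lem:2sets} applies verbatim to $C$, and the recolorings over distinct buffer components agree where they overlap because there they simply follow the pre-assigned protocols. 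Without some such synchronization device, per-island protocols cannot be glued into a single Borel coloring.
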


\begin{proof}
Let $N = 8(d+1)$. Let $X = U_1 \sqcup V$ be a Borel partition witnessing $\asi_B(G(a,S)) = 1$ for this value of $N$. Let $V_1 = X \setminus B(U_1,4d+1)$. Arguing as in the proof of Lemma \ref{lem:rainbow}, we can conclude that every injective $G(a,S)$-ray passes through $V_1$. Thus, by K\"{o}nig's lemma, the connected components of $G(a,S) \res (X \setminus V_1)$ are still all finite. $G(a,S) \res V_1$ also has only finite connected components since $G(a,s) \res V$ did. Likewise for $U_1$ and its complement. See Figure \ref{fig:toast} to see how these sets might be arranged.

\begin{figure}
\centering
\includegraphics[width=0.8\textwidth]{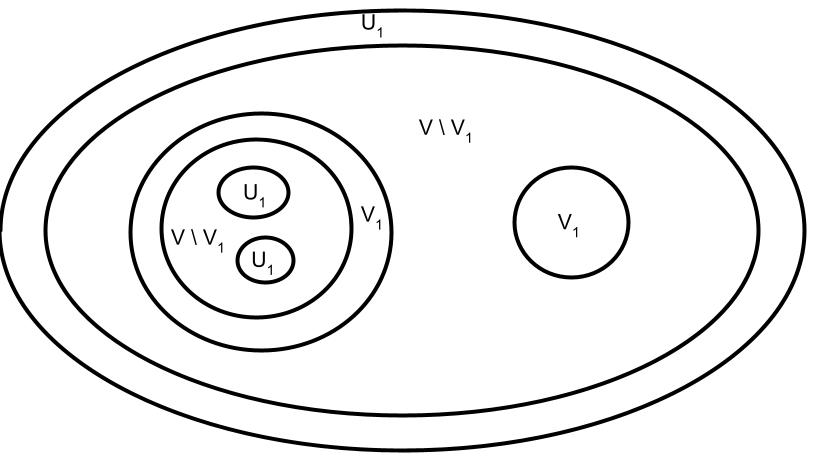}
\caption{\label{fig:toast} A typical arrangement of the sets in the proof of Lemma \ref{lem:std} for $d=2$.}
\end{figure}

Thus, as in the proof of Lemma \ref{lem:Z}, we may Borel $2d$-edge color $G(a,S) \res U_1$ so that a consistent sequence of protocols having the form from Lemmas \ref{lem:fulltransition} and \ref{lem:2sets} is followed within each $G(a,S) \res (X \setminus V_1)$ component, and Borel $2d$-edge color $G(a,S) \res V_1$ so that a consistent such sequence of protocols is followed within each $G(a,S) \res (X \setminus U_1)$-component.

Now let $C$ be a $G(a,S) \res (V \setminus V_1)$-component. Let $V_2$ be the union of the $V_1$-components meeting $B(C,1)$ (that is, with edges to $C$),and $U_2$ the union of the $U_1$-components meeting $B(C,1)$. $V_2$ is contained in a single $(X \setminus U_1)$-component, and hence is colored using a consistent sequence of protocols having the form from Lemma \ref{lem:2sets} , and likewise for $U_2$. Thus, by the lemma (note that the path distance between $U_2$ and $V_2$ is more than $4d+1$), we can find a $2d$-coloring of $G \res (C \cup U_2 \cup V_2)$ which still follows the original protocols set for $U_2$ and $V_2$ on those sets. Since $C$ is always finite, we can in a Borel fashion do this simultaneously for every such $C$, and by construction this does not cause any color conflicts.
\end{proof}

\subsubsection{Protocols and Transitions}\label{subsubsec:pro}

Since the remaining cases we cover will be handled very similarly to this one, we will not go over these details for each one. Instead we will content ourselves in each one with establishing some appropriate notions of ``protocol'', and using Lemma \ref{lem:general} to at least sketch some analogue of Lemma \ref{lem:fulltransition} saying different sequences of such protocols can be transitioned between in a bounded neighborhood. 

What we called ``protocol'' in the previous arguments, let us now call ``\textit{standard protocol}'', to differentiate it from the variants to come. Many such variants will be defined, so they are collected in Table \ref{tab:protocols} to help the reader keep track of them. The general form of these protocols will be the same as for the standard protocol: They will always specify some periodic way of 2-coloring the $\gamma$-edges in some orbit of some group relative to some basepoint in that orbit for $\gamma$ some specified generator of the group. Some protocols will only make sense for certain groups and/or certain values of $\gamma$.

The syntax for specifying $\gamma$ and the colors will be the same as for the standard protocol. As with the standard protocol, when we say this protocol is \textit{followed} on some set of vertices $U$, it will mean that all $\gamma$-edges meeting $U$ are colored in the ``way'' specified by the protocol.

Let us now introduce some general terminology for the ``transitions'' between protocols mentioned three paragraphs ago. Let $U''$ be an orbit of some group action $a$, $\gamma_1,\ldots,\gamma_n$ generators, $\Pi_1,\ldots,\Pi_n, \Pi_1',\ldots,\Pi_n'$ protocols (including basepoints), where for each $i$, $\gamma_i$ is the type of group element which $\Pi_i$ and $\Pi_i'$ describe colorings for. We say \textit{we can transition from $\Pi_1$ for $\gamma_1$,$\ldots$, $\Pi_n$ for $\gamma_n$ to $\Pi_1'$ for $\gamma_1$,$\ldots$, $\Pi_n'$ for $\gamma_n$} if roughly speaking, given a $2n$-coloring $c$ of $G(a,\{\gamma_1,\ldots,\gamma_n\})$ restricted to some subset $U$ of $U''$ which for each $i$ follows $\Pi_i$ for $\gamma_i$ using, say, the colors $2i-1$ and $2i$, we can extend $c$ to a $2n$-coloring of the restriction to some bounded neighborhood of $U$ which on the ``boundary'' of that neighborhood follows $\Pi_i'$ for $\gamma_i$ using the colors $2i-1$ and $2i$ for each $i$. A more precise statement would look something like the statements of Lemmas \ref{lem:1transition} and \ref{lem:fulltransition}.

The most basic claims of the form described in the previous paragraph will be proven using a single application of Lemma \ref{lem:general}, with some distinct $\gamma_i$, $\gamma_j$ from the previous paragraph playing the part of $\gamma_1$ and $\gamma_2$ from the lemma. More difficult ones will be built up in steps from several of these ``basic'' ones, using additional sequences of protocols as intermediaries.

\begin{table}[]
    \centering
    \begin{tabular}{| c | c | c | c |}
        \hline 
        Protocol & Context & Generators applied to & Reference \\
        \hline
        \hline
        Standard & $\Z^d$, $d \geq 2$ & $e_i$, $1 \leq i \leq d$ & Figure \ref{fig:protocol} \\
        \hline
        Block & $\Z^2$ & $(a,0)$ or $(0,a)$, $a > 0$ & Section \ref{subsubsec:block} \\
        \hline
        Diagonal & $\Z^2$ & $(b_1,b_2)$, $b_1,b_2 > 0$ & Beginning of Section \ref{subsubsec:3gen1} \\
        \hline
        Alternating & $\Z^2$ & $(0,1)$ & Figure \ref{fig:diag} \\
        \hline
    \end{tabular}
    \caption{The different protocols used in Section \ref{subsec:2}. Column two gives the ambient group which each protocol is defined for. Column three lists those group elements whose edges the protocol sets colors for.}
    \label{tab:protocols}
\end{table}

let us use the already covered case of $\Z^d$, with the standard generators as an example to showcase our terminology. After defining the standard protocol and fixing a $\Z^d$-orbit $U''$, we could replace Lemma \ref{lem:1transition} with the following statement, proved the same way with a single application of Lemma \ref{lem:general}.

\begin{lemma}
Let $i \neq j$, $x_i,x_j \in U''$. We can transition from standard protocol $x_i$ for $e_i$ and standard protocol $x_j$ for $e_j$ to standard protocol $e_i \cdot x_i$ for $e_i$ and standard protocol $x_j$ for $e_j$.
\end{lemma}

Inductively applying this would give the following restatement of Lemma \ref{lem:fulltransition}:

\begin{lemma}
Let $x_1,\ldots,x_d,y_1,\ldots,y_d \in U''$. We can transition from standard protocol $x_i$ for $e_i$ for each $i$ to standard protocol $y_i$ for $e_i$ for each $i$.
\end{lemma}

\subsubsection{Multiples of the standard generators}\label{subsubsec:block}

One easy case we will need to address is $d = 2$, $$S = \{\pm (a_1,0),\ldots,\pm (a_n,0), \pm(0,b_1),\ldots,\pm(0,b_m)\}$$ for $n,m > 1$ and the $a_i$'s and $b_i$'s arbitrary positive integers (possibly with repeats). 

Let $U''$ be a $\Z^2$-orbit as before, $x_0 \in U''$, $a > 0$. The \textit{block protocol} $x_0$ for $(a,0)$ using, say, the colors 1 and 2, will dictate that if $x = (g_1,g_2) \cdot x_0$, then the edge $(x,(a,0) \cdot x)$ gets the color $1$ if $g_1 \in [0,a)$ (mod $2a$), and the color 2 otherwise. The block protocol $x_0$ for $(0,a)$ is defined similarly. Note that the standard protocols are a special case of these.

\begin{lemma}\label{lem:blocktrans}
Let $a,b > 0$, $x_0,y_0 \in U''$. We can transition from block protocol $x_0$ for $(a,0)$ and block protocol $y_0$ for $(0,b)$ to block protocol $(1,0) \cdot x_0$ for $(a,0)$ and block protocol $y_0$ for $(0,b)$.
\end{lemma}
\begin{proof}
let us say our protocols use the colors 1 and 2 for $(a,0)$ and 3 and 4 for $(0,b)$. As in the proof of Lemma \ref{lem:1transition}, we apply Lemma \ref{lem:general} with $\gamma_1 = (a,0)$, $\gamma_2 = (0,b)$, and $P$ pairing up adjacent $(a,0)$-orbits whose connecting edges all get the color 3. This time, though, we only need to change parity along those $(a,0)$-orbits containing a point of the form $(g_1,g_2) \cdot x$ with $g_1$ a multiple of $a$. An orbit $f$ satisfies this condition if and only if its adjacent orbits do, so this is fine. Note that condition 3 from the lemma holds since in a block protocol for $(a,0)$, the color of the edge $((g_1,g_2) \cdot x_0, (g_1+a,g_2) \cdot x_0)$ only depends on the first coordinate $g_1$.
\end{proof}

Of course, the analogous statement with the coordinates swapped hold as well. Therefore, since $n,m > 0$, an inductive argument shows that we can transition between any pair of sequences of block protocols for our generators in $S$, and so we conclude:

\begin{lemma}\label{lem:mult}
With $d=2$ and $S$ as above, $\chi_B(G(a,S)) = |S|$.
\end{lemma}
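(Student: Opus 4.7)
The plan is to parallel the proof of Lemma \ref{lem:std} essentially verbatim, replacing the notion of protocol with the one defined just before the lemma statement. First I would establish analogs of Lemmas \ref{lem:1transition}, \ref{lem:fulltransition}, and \ref{lem:2sets}, and then conclude with the same Borel partition argument using $\asi_B(G(a,S)) = 1$ (which holds by Theorem \ref{th:poly} since $\Z^2$ has polynomial growth).

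The only new work is the analog of Lemma \ref{lem:1transition} for an elementary shift, say $x_1 = (1,0) \cdot x_0$. Vertical edges are left unchanged: their colors depend only on $g_2$, and can be filled in consistently with either protocol throughout $A(U, 1, K-1)$. For each horizontal generator $\gamma_i = (a_i, 0)$, a direct computation shows that the two protocols disagree precisely on those $(a_i, 0)$-orbits passing through the column $\{g_1 \equiv 0 \pmod{a_i}\}$, on which the colors $2i-1$ and $2i$ are entirely swapped. I would apply Lemma \ref{lem:general} sequentially, one $i$ at a time, with $\gamma_1 = (a_i, 0)$, $\gamma_2 = (0, b_1)$ (a fixed vertical generator, available since $m \geq 1$), and with colors $1,2,3,4$ of the lemma playing the roles of $2i-1, 2i, 2n+1, 2n+2$. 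The partition $P$ would pair each $(a_i, 0)$-orbit $f$ with $(0, b_1) \cdot f$, chosen so that both connecting vertical edges between paired orbits are colored $2n+1$ by the protocol (a condition determined by a residue class in $g_2$).

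The main technical obstacle will be verifying condition 5 of Lemma \ref{lem:general}: every $\gamma_i$-path through the shell must contain an interior edge whose parallel edge is also interior, with both connecting vertical edges colored $2n+1$. Taking the shell thickness $K$ to be a suitable constant multiple of $\max_i a_i \cdot b_1$ suffices, since any such path crosses several fundamental domains of the $(0,b_1)$-coloring pattern and thus must encounter the required configuration. Sequential applications for different $i$ do not conflict: each uses its own pair of colors $\{2i-1, 2i\}$, and the borrowed color $2n+1$ appears only on $(a_i, 0)$-edges, never on vertical edges whose colors remain fixed. An analogous argument handles $(0,1)$-shifts using a fixed horizontal generator as helper (available since $n \geq 1$). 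Chaining elementary shifts yields the analog of Lemma \ref{lem:fulltransition}, and the rest of the argument---the Lemma \ref{lem:2sets} analog plus the Borel partition proof---goes through verbatim as in the proof of Lemma \ref{lem:std}.
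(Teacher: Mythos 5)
Your proposal is correct and takes essentially the same route as the paper, which disposes of this case by defining exactly the protocol you use and asserting that transitions between protocols are made ``exactly like'' the standard-generator case via Lemma \ref{lem:general}; you carry out that reduction in more detail than the paper itself does. One small imprecision: the borrowing move of Lemma \ref{lem:general} does recolor the two vertical ``rung'' edges of the swapping square, so vertical edges do not literally ``remain fixed,'' but since you apply the lemma for the different horizontal generators sequentially (in disjoint shells, say), and the recolored rungs only ever meet edges from other color classes, this causes no conflict.
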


Of course, there is nothing special here about $d=2$, but we will not need the higher dimensional versions of this case.

\subsubsection{$\Z^2$ with three generators, subcase 1}\label{subsubsec:3gen1}

We now consider cases of the form $d = 2$, $S = \{\pm(n,0),\pm(0,n),\pm(b_1,b_2)\}$, where $n,b_1$, and $b_2$ are positive integers. Of course, we may assume $n,b_1$, and $b_2$ do not share any common factors. Fix such an $S$ for the time being.

Let $U'', x_0$ as before. The \textit{diagonal protocol} $x_0$ for $(b_1,b_2)$, say using the colors 5 and 6, will dictate that edges of the form $((a_1,a_2) \cdot x_0 , (a_1+b_1,a_2+b_2) \cdot x_0)$ get the color 5 if $a_1 \in [0,b_1)$ (mod $2b_1$) and 6 otherwise. We will always use such protocols for $(b_1,b_2)$.

First consider the subcase $n = 1$. If $b_1$ and $b_2$ are not both odd, assume WLOG $b_2$ is even. Outside of transitional steps, we will use standard protocols for the generators $(1,0)$ and $(0,1)$. We have already seen that, with the standard protocols, these generators can use each other to change their basepoints. Since the diagonal protocol only depends on the first coordinate of its basepoint, it therefore suffices to show:

\begin{lemma}\label{lem:3gentrans1}
Let $x_0 \in U''$. We can transition from the standard protocol $x_0$ for $(1,0)$ and $(0,1)$ and the diagonal protocol $x_0$ for $(b_1,b_2)$ to the standard protocol $x_0$ for $(1,0)$ and $(0,1)$ and the diagonal protocol $(1,0) \cdot x_0$ for $(b_1,b_2)$.
\end{lemma}

\begin{proof}
This will be split into two further subcases, depending on the parity of $b_1$ and $b_2$. For the first, suppose $b_2$ is even.

let us say our protocols for $(0,1)$ use the colors 3 and 4. We want to apply Lemma \ref{lem:general} with $\gamma_1 = (b_1,b_2)$ and $\gamma_2 = (0,1)$. Since $b_2$ is even, if an edge between two adjacent $(b_1,b_2)$-orbits is given the color 3 by the standard protocol $x_0$ for $(0,1)$, then all such edges are. Thus we can apply the lemma exactly as we did in the proof of Lemma \ref{lem:blocktrans}.

For the second, by our assumption from before the statement of the lemma, both $b_1$ and $b_2$ are odd. The \textit{alternating protocol} $x_0$ for $(0,1)$, say using the colors 3 and 4, will dictate that edges of the form $((a_1,a_2) \cdot x_0, (a_1,a_2+1) \cdot x_0)$ get the color dictated by the standard protocol $x_0$ using 3 and 4 if $a_1 \in \{0,1\}$ (mod 4), and the opposite of that color otherwise. See Figure \ref{fig:diag}. 

Our desired transition will be accomplished in three ``steps'' in the sense of Section \ref{subsubsec:pro}. First we switch to an alternating protocol for $(0,1)$, then shift the basepoint for $(b_1,b_2)$'s diagonal protocol as desired, then switch back to the standard protocol for $(0,1)$.

For the first and third of these steps (noting our ``we can transition...'' relation is symmetric) we show that we can transition from standard protocol $x_0$ for $(1,0)$ and $(0,1)$ to standard protocol $x_0$ for $(1,0)$ and alternating protocol $x_0$ for $(0,1)$. This is exactly like the transition to standard protocol $(0,1) \cdot x_0$ for $(0,1)$, but in our application of Lemma \ref{lem:general}, instead of changing parity along each pair of $(0,1)$-orbits in $P$, we only change parity along every other pair.

For the second, we show that we can transition from alternating protocol $x_0$ for $(0,1)$ and diagonal protocol $x_0$ for $(b_1,b_2)$ to alternating protocol $x_0$ for $(0,1)$ and diagonal protocol $(1,0) \cdot x_0$ for $(b_1,b_2)$. Figure \ref{fig:diag} shows the configuration given by our starting set of protocols. let us say we are using the same colors as in the figure. Consider removing the $(0,1)$-edges colored 4 from this picture. Since $b_1$ and $b_2$ are odd, the resulting picture looks exactly like that in Figure \ref{fig:general}, but with the colors 5 and 6 in place of the colors 1 and 2. Therefore condition 5 from Lemma \ref{lem:general} is satisfied with $\gamma_1 = (b_1,b_2)$, $\gamma_2 = (0,1)$. Condition 3 is satisfied as in the previous subcase. Thus we can swap parity along the appropriate pairs of adjacent $(b_1,b_2)$-orbits. (As in the proof of Lemma \ref{lem:blocktrans}, this will be those whose orbits contain a point of the form $(a_1,a_2) \cdot x_0$ with $a_1$ a multiple of $b_1$.)
\end{proof}

\begin{figure}
\centering
\includegraphics[width=\textwidth]{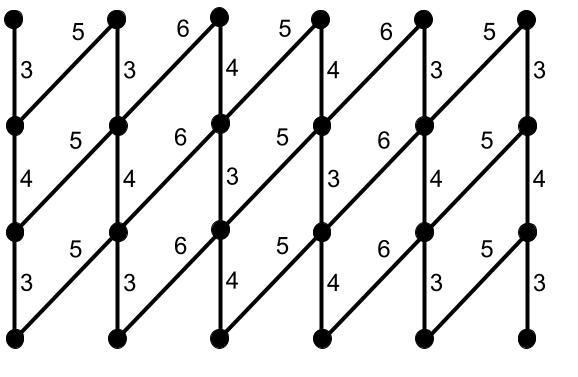}
\caption{\label{fig:diag} A picture of a coloring using an alternating protocol for $(0,1)$ and a diagonal one for $(b_1,b_2)$, as in the second case in the proof of Lemma \ref{lem:3gentrans1}. Here $b_1=b_2=1$.}
\end{figure}

\subsubsection{$\Z^2$ with three generators, subcase 2}\label{subsubsec:3gen2}

Next consider the subcase $n > 1$. Then, since we assumed there were no common divisors, WLOG $n$ does not divide $b_2$. We will use block protocols for the $(n,0)$ and $(0,n)$ edges.

Once again, since we already have Lemma \ref{lem:blocktrans}, we need only to show:

\begin{lemma}\label{lem:3gentrans2}
Let $x_0 \in U''$. We can transition from the block protocol $x_0$ for $(0,n)$ and the diagonal protocol $x_0$ for $(b_1,b_2)$ to the block protocol $x_0$ for $(0,n)$ and the diagonal protocol $(1,0) \cdot x_0$ for $(b_1,b_2)$.
\end{lemma}

\begin{figure}
\centering
\includegraphics[width=\textwidth]{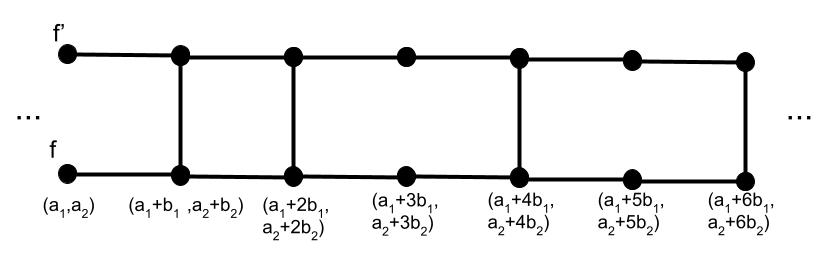}
\caption{\label{fig:uneven} A visualization of the $n > 1$ case. The horizontal edges are the $(b_1,b_2)$-edges, while the vertical ones are the $(0,n)$-edges colored 3. The label $(g_1,g_2)$ for a point is shorthand for $(g_1,g_2) \cdot x_0$. Whether or not a vertical edges touching some $(g_1,g_2)$ is included depends on the value of $g_2$ mod $2n$.}
\end{figure}

\begin{proof}
We wish to apply Lemma \ref{lem:general} with $\gamma_1 = (b_1,b_2)$ and $\gamma_2 = (0,n)$, let us say using the colors 5 and 6 for $\gamma_1$ and 3 and 4 for $\gamma_2$. Condition 3 is satisfied as in previous arguments. To apply the lemma in the way we want, it would suffice to show the following about a coloring using our starting protocols: There is an $N$ (depending only on $n,b_1$, and $b_2$) such that for every $(b_1,b_2)$-orbit $f$ with adjacent orbit $f' = (0,n) \cdot f$, for every path in $f$ of length $N$, there is some edge $e$ in the path satisfying condition 5 of the lemma. That is, such that both $(0,n)$-edges from $e$ to $f'$ get the color 3.

Let $(a_1,a_2) \cdot x_0$ be an arbitrary point in $f$, so that the points on $f$ are those of the form $(a_1 + kb_1,a_2+kb_2) \cdot x_0$ for $k \in \Z$. By definition of the block protocol, the edge from such a point to $f'$ will get the color 3 if and only if $a_2 + kb_2 \in [0,n)$ (mod $2n$). The picture so far is summed up in Figure \ref{fig:uneven}.

Our problem thus reduces to the following number theoretic one. Given that $n \not|\ b_2$, find $N$ large enough so that for any $a_2 \in \Z$, there is some $k \in [0,N]$ such that $a_2+kb_2$ and $a_2 + (k+1)b_2$ are both in the range $[0,n)$ (mod $2n$). Since everything is modulo $2n$, it just suffices to show that we can find some such $k \in \N$, and then we can take $N = 2n$. Assume that $b_2 \in (0,n)$ (mod $2n$), as the case $b_2 \in (n,2n)$ (mod $2n$) will follow from a similar argument.

Now let $q = \gcd(b_2,2n) < n$. Then there will always be a $k$ such that $a_2 + kb_2 \in [0,q)$ (mod $2n$). Also, since we have assumed $b_2 \in (0,n)$ (mod $2n$), we must actually have $b_2 \in [q,n-q]$ (mod $2n$). Then $a_2+(k+1)b_2$ is in the range $[0,n)$ (mod $2n$) as desired.
\end{proof}

Let us record the result of Subsubsections \ref{subsubsec:3gen1} and \ref{subsubsec:3gen2} together:

\begin{lemma}\label{lem:3gens}
With $d = 2$, $S = \{\pm(n,0),\pm(0,n),\pm(b_1,b_2)\}$ for some $b_1,b_2,n > 0$, we have $\chi_B'(G(a,S)) = |S| = 6$.
\end{lemma}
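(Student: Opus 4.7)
The plan is to combine the constructions from Subsubsections \ref{subsubsec:3gen1} and \ref{subsubsec:3gen2}, which jointly cover all cases ($n=1$ with some $b_i$ even, $n=1$ with both $b_i$ odd, and $n \geq 2$), and then feed them into the Borel patching scheme from the proof of Lemma \ref{lem:std}. Since $|S|=6$ equals the vertex degree of $G(a,S)$, the only nontrivial direction is the upper bound $\chi_B'(G(a,S)) \leq 6$.

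In each of the three subcases the subsubsections define an appropriate notion of ``protocol $x_0$'' on each $\Z^2$-orbit $U''$: the $(n,0)$- and $(0,n)$-edges are colored as in Lemma \ref{lem:mult}, while $(b_1,b_2)$-edges use $5,6$ alternating by the first-coordinate residue modulo $2b_1$. The main technical output of the subsubsections is a bounded-radius protocol transition: given a partial coloring on $B(U,1)$ following protocol $x_0$ on $A(U,0,1)$, one can extend it to $B(U,N)$ (with $N=N(n,b_1,b_2)$) so that on $A(U,N-1,N)$ it follows any prescribed protocol $\gamma \cdot x_0$ for a generator $\gamma$. An obvious induction then produces the analogue of Lemma \ref{lem:fulltransition}, and thence the analogue of Lemma \ref{lem:2sets}: two protocol colorings on subsets $U,V$ of the same orbit at sufficient path distance can be amalgamated into a coloring of all of $U''$.

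With the orbit-level transitions in place, the Borel construction repeats the argument of Lemma \ref{lem:std} verbatim. Since $\Z^2$ has polynomial growth, Theorems \ref{th:poly} and \ref{th:asi=1} give $\asi_B(G(a,S))=1$, so for sufficiently large $N'$ we can pick a Borel partition $X = U_1 \sqcup V$ with $G^{\leq N'} \res U_1$ and $G^{\leq N'} \res V$ having only finite components. Setting $V_1 = X \setminus B(U_1,N)$, K\"onig's lemma forces the components of $G(a,S) \res (X \setminus V_1)$ to be finite as well. We then Borel-assign protocols consistently on the finite components of $G(a,S) \res U_1$ and $G(a,S) \res V_1$, and patch the remaining intermediate finite components using the orbit-level transition result.

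The real difficulty, already discharged in Subsubsections \ref{subsubsec:3gen1} and \ref{subsubsec:3gen2}, lies in constructing the protocol transition for the $(b_1,b_2)$-edges, since unlike the axis-aligned generators the diagonal one is sensitive to the parities of $b_1, b_2$ and the divisibility of $b_2$ by $n$. When $n=1$ and some $b_i$ is even, a single application of Lemma \ref{lem:general} suffices; when $n=1$ and both $b_i$ are odd, one must route through an intermediate ``alternating protocol'' so parallel-edge parities line up for Lemma \ref{lem:general}; and when $n \geq 2$ the key input is a number-theoretic observation that for every $a_2 \in \Z$ there is some $k \in [0,2n]$ with both $a_2 + kb_2$ and $a_2 + (k+1)b_2$ in $[0,n) \pmod{2n}$. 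Citing these, the present lemma follows.
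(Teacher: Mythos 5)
Your proposal matches the paper's own argument: Lemma \ref{lem:3gens} is proved there precisely by the protocol constructions and transitions of Subsubsections \ref{subsubsec:3gen1} and \ref{subsubsec:3gen2} (splitting into the same three cases, invoking Lemma \ref{lem:general}, and using the same number-theoretic observation when $n>1$), with the Borel patching carried out exactly as in Lemma \ref{lem:std}. No substantive differences to report.
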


\subsubsection{Combining the cases}\label{subsubsec:2comb}

Finally, we are ready to finish the proof for general $d$ and $S$.

\begin{proof}
Let $S = \{\pm \gamma_1,\ldots,\pm \gamma_n\}$, and let $S' = \{\gamma_1,\ldots,\gamma_n\}$. We define an equivalence relation $\sim$ on $S'$ which sets two generators as equivalent if one is a scalar multiple of the other. Equivalently, if the subgroup they generate is cyclic. Let $C_1,\ldots,C_m$ list the classes in $S'/\sim$. Note that we must have $m \geq d > 1$.

Because of this, we can partition the set of $C_i$'s into sets of size 2 and 3. It now suffices to show that given an element of this partition $\{C_{i_j} \mid 1 \leq j \leq r\}$, where $r \in \{2,3\}$, we can Borel color the edges in $G(a,\pm \bigcup_j C_{i_j})$ using $|\pm \bigcup_j C_{i_j}|$ colors.

First consider the case $r = 2$. For notational convenience, take $i_j = j$ for $j=1,2$. Let $\Gamma = \langle C_1,C_2 \rangle \cong \Z^2$. Then $(\Gamma,\pm (C_1 \cup C_2))$ is isomorphic as a marked group to $\Z^2$ with a set of generators of the form covered in Lemma \ref{lem:mult}, so that lemma says this case is done.

Next consider the case $r=3$, and again take $i_j = j$ for $j=1,2,3$. We would like to reduce to the case where $C_1,C_2$, and $C_3$ are all singletons. Suppose, for instance that $C_1$ includes at least two elements, and let $\delta_1$ be one. Pick some $\delta_2$ from $C_2$. Then we can $4$-edge color $G(a,\{\pm \delta_1,\pm \delta_2\})$ by the previous paragraph. We are left with $C_1 - \{\delta_1\}, C_2 - \{\delta_2\}$, and $C_3$. Note that $C_1 - \{\delta_1\}$ is still nonempty. If $|C_2| = 1$, then we have reduced to the previous paragraph. If not, and one of our three sets still contains more than one element, repeat this.

Thus we have our reduction. Let $C_i = \{\delta_i\}$ for $i=1,2,3$. Now by definition of $\sim$, $\Gamma := \langle \delta_1,\delta_2,\delta_3 \rangle$ is isomorphic to either $\Z^2$ or $\Z^3$. 

In the latter case, $(\Gamma,\{\pm \delta_1,\pm \delta_2, \pm \delta_3\})$ is isomorphic as a marked group to $\Z^3$ with its standard generators, so by Lemma \ref{lem:std} we are done.

In the former case, we can write $n \delta_3 = b_1 \delta_1 + b_2 \delta_2$ for some $n,b_1,b_2 \neq 0$. Then $(\Gamma,\{\pm \delta_1,\pm \delta_2, \pm \delta_3\})$ is isomorphic as a marked group to $\Z^2$ with the generating set from Lemma \ref{lem:3gens}. (Send $\delta_1$ to $(n,0)$, $\delta_2$ to $(0,n)$, and $\delta_3$ to $(b_1,b_2)$.) We can also assume $n,b_1,b_2 > 0$ by replacing the $\delta$'s with their inverses if necessary. Thus by the lemma, we are done.
\end{proof}

\subsection{Free rank 1}\label{subsec:1}

In this subsection we complete the proof of Statement 2 of Theorem \ref{th:Z^d}. For the discrete part of the statement, since we can use Lemma \ref{lem:quotient} with $\Delta = \Delta'$, it suffices to prove $\chi'(G(a,S)) = |S|$ when $\Gamma = \Z$, but where we now allow multiplicity for generators. This is obviously the case, though.

We thus turn to the Borel part of the statement, We need to show that if $\Delta$ has even order, then $\chi_B'(G(a,S)) = |S|$. As in the proof of Statement 1, we can start by finding an index 2 subgroup $\Delta' \leq \Delta$, and then use Lemma \ref{lem:quotient} to reduce to the case where $\Delta = \Z/2$, but where we now allow multiplicity for generators in $S$. For the rest of the subsection, assume $(\Gamma,S)$ has this form. We will sometimes write ``$\Z$'' to refer to the subgroup $\{0\} \times \Z \leq \Gamma$.

We start with the special case where $(1,0) \in S$. (Recall we are writing $\Gamma = \Z/2 \times \Z$. For example this element has order 2.)

\begin{lemma}\label{lem:(1,0)}
If $(1,0) \in S$, $\chi_B'(\Gamma,S) = |S|$.
\end{lemma}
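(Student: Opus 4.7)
The plan is straightforward: apply Corollary~\ref{cor:d} directly with the finite normal subgroup $\Delta := \Z/2 \times \{0\} \leq \Gamma$, and verify its four hypotheses.

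First, $\asi_B(G(a,S)) = 1$. Indeed, $\Gamma = \Z/2 \times \Z$ is a finite extension of $\Z$, hence has linear (so polynomial) growth. Theorem~\ref{th:poly} gives $\asdim_B(G(a,S)) < \infty$, which combined with Theorem~\ref{th:asi=1} yields $\asi_B = 1$.

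Second, the odd-order hypothesis of Corollary~\ref{cor:d+1} is automatic: every $\gamma \in S \setminus \Delta$ has nonzero image in $\Gamma/\Delta \cong \Z$, and hence has infinite (not odd finite) order in the quotient.

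Third, $S_1 := S \cap \Delta$ is nonempty by the standing assumption $(1,0) \in S$; in fact $S_1$ consists of exactly the copies of $(1,0)$ in the multiset $S$. Fourth, $\chi'(G(a,S_1)) = |S_1|$: each connected component of $G(a, S_1)$ is a pair $\{x, (1,0) \cdot x\}$ joined by $|S_1|$ parallel edges, and a multigraph on two vertices with $k$ parallel edges between them has chromatic index exactly $k$.

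With all four hypotheses in hand, Corollary~\ref{cor:d} yields $\chi_B'(G(a,S)) = |S|$ immediately. I do not anticipate any real obstacle here; this lemma serves as a clean base case in which the presence of the order-two generator $(1,0)$ makes the setup of Corollary~\ref{cor:d} available off-the-shelf, with the harder work presumably deferred to the complementary case $(1,0) \notin S$ where one can no longer take $S_1$ to contain $(1,0)$.
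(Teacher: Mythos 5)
Your proposal is correct and follows exactly the paper's route: the paper's entire proof is ``This follows immediately from Corollary~\ref{cor:d} with $\Delta = \Z/2$.'' Your verification of the hypotheses (linear growth giving $\asi_B = 1$ via Theorems~\ref{th:poly} and~\ref{th:asi=1}, infinite order of images in $\Gamma/\Delta \cong \Z$, nonemptiness of $S_1$, and the trivial chromatic index computation for parallel edges on two vertices) is accurate and merely makes explicit what the paper leaves implicit.
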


\begin{proof}
This follows immediately from Corollary \ref{cor:d} with $\Delta = \Z/2$.
\end{proof}

Thus, from now on assume $(1,0) \not\in S$. Then we can write $S = S' \sqcup -S'$, where $S'$ consists of all pairs $(\epsilon,n) \in S$ for which $n > 0$. The elements $(\epsilon,n) \in S'$ will be organized into 4 categories corresponding to the value of $\epsilon$ and the parity of $n$. We will consider these categories roughly one at a time, for each establishing suitable notions of protocol as in the previous subsection, and demonstrating how to transition between protocols. See Table \ref{tab:1protocols} for a list of protocols to be used in this subsection.

These transitions will almost always be accomplished by an argument using Lemma \ref{lem:general}. That lemma was stated in the context of $\Z^d$, but in this context its terminology still makes sense and its conclusion still holds.

\begin{table}[]
    \centering
    \begin{tabular}{| c | c | c | c |}
        \hline 
        Protocol & Context & Generators applied to & Reference \\
        \hline
        \hline
        Two sided & $\Z/2 \times \Z$ & $(1,m)$, $m > 0$ & Figure \ref{fig:altpar} \\
        \hline
        Odd & $\Z$ & $n > 0$ odd & Figure \ref{fig:altpar} \\
        \hline
        Parallel & $\Z/2 \times \Z$ & $(0,n)$, $n >0$ odd & Figure \ref{fig:altpar}(b)\\
        \hline
        Alternating & $\Z/2 \times \Z$ & $(0,n)$, $n > 0$ odd & Figure \ref{fig:altpar}(a) \\
        \hline
        Block & $\Z$ & $n > 0$ odd & Figure \ref{fig:uneven2}(b) \\
        \hline
        Offset & $\Z/2 \times \Z$ & $(0,n)$, $n > 0$ odd & Figure \ref{fig:uneven2}(b)\\
        \hline
        One code & $\Z$ & $b > 0$ even &  Before Lemma \ref{lem:eventrans}\\
        \hline
        Short code & $\Z$ & $b > 0$ even & After the proof of Lemma \ref{lem:eventrans} \\
        \hline
        Two code & $\Z/2 \times \Z$ & $(0,b)$, $b > 0$ even & Figure \ref{fig:even}\\
        \hline
        
    \end{tabular}
    \caption{Like Table \ref{tab:protocols}, but for free rank 1. Note that the offset, one code, short code, and two code protocols require additional data to be fully specified.}
    \label{tab:1protocols}
\end{table}

\subsubsection{Generators from $2\Z + (0,1)$ and $\Z + (1,0)$}\label{subsubsec:rank1base}

We start by examining how to color edges of the form $(0,n)$ for $n$ an odd positive integer and $(1,m)$ for $m$ any positive integer. Fix such $n$ and $m$. Let $U''$ be a $\Gamma$-orbit, and $x_0 \in U''$. 

The \textit{two sided protocol} $x_0$ for $(1,m)$, say using the colors 3 and 4, will dictate that edges of the form $(x,(1,m) \cdot x)$ get the color 3 if $x$ and $x_0$ are in the same $\Z$-orbit and 4 if they are not. See both parts of Figure \ref{fig:altpar} for examples.

For the $(0,n)$ edges, we start with one $\Z$-orbit: If $y_0$ is in an orbit of some free $\Z$-action, a coloring of $n$-edges in that orbit will be said to follow the \textit{odd protocol} $y_0$ for $n$, say using the colors 1 and 2, if the edge $(a \cdot y_0, (a+n) \cdot y_0)$ gets the color 1 if $a$ is even and 2 otherwise. Note that this does define a coloring since $n$ is odd.

Now in our original context, the \textit{parallel protocol} $x_0$ for $(0,n)$ will dictate that the odd protocols $x_0$ and $(1,0) \cdot x_0$ are followed on the $\Z$-orbits $\Z \cdot x_0$ and $\Z \cdot (1,0) \cdot x_0$ respectively. The \textit{alternating protocol} $x_0$ for $(0,n)$ is defined similarly, except that $(1,1) \cdot x_0$ is used as the basepoint for the odd protocol on the second $\Z$-orbit. See Figure \ref{fig:altpar} for examples.

\begin{figure}
\centering
\includegraphics[width=0.8\textwidth]{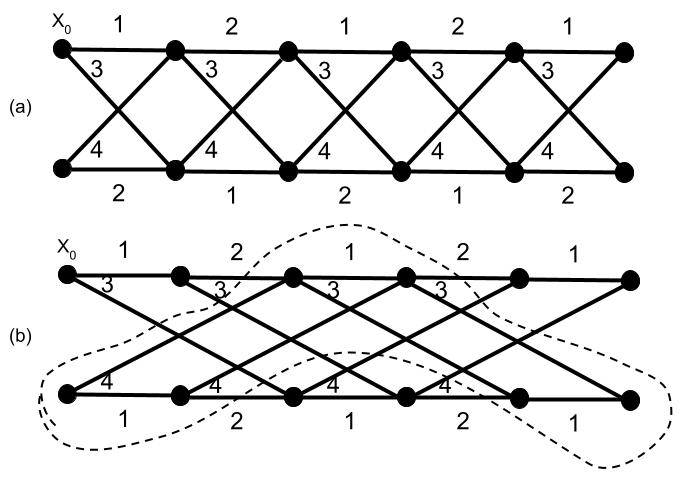}
\caption{\label{fig:altpar} (a): A drawing of a coloring following protocol $x_0$ in the case $n=m=1$. Note that the $(0,1)$-edges use the alternating protocol. (b): Same, but now with $m=2$. Note that now the $(0,1)$-edges use the parallel protocol. The dotted line sections off a pair of adjacent $(1,m)$ orbits. This makes it clear that they follow the necessary hypotheses to apply Lemma \ref{lem:general}.}
\end{figure}

Note that for two sided protocols, any choice of basepoint will give the same protocol as either $x_0$ or $(1,0) \cdot x_0$. Similarly, for parallel and alternating protocols, any choice of basepoint will give the same protocol as either $x_0$ or $(0,1) \cdot x_0$. 

\begin{lemma}\label{lem:1dbasictrans}
Let $x_0,y_0,x_1,y_1 \in U''$. Then
\begin{enumerate}
    \item If $m$ is even, then we can transition from the two sided protocol $x_0$ for $(1,m)$ and the parallel protocol $y_0$ for $(0,n)$ to the two sided protocol $x_1$ for $(1,m)$ and the parallel protocol $y_1$ for $(0,n)$.
    \item Likewise if $m$ is odd, but with alternating protocols in place of the parallel ones for $(0,n)$.
\end{enumerate}
\end{lemma}

\begin{proof}
Note that Figures \ref{fig:altpar} (b) and (a) provide a picture for parts 1 and 2 of the lemma respectively. Let us use the color assignments from those pictures.

By the comment before the statement of the lemma, it suffices to show these statements in the following two instances: When $x_0 = x_1$ and $y_1 = (0,1) \cdot y_0$, and when $y_0 = y_1$ and $x_1 = (1,0) \cdot x_0$.

For the first instance, we want to apply Lemma \ref{lem:general} with $\gamma_1 = (0,n)$ and $\gamma_2 = (1,m)$. $P$ will group together $(0,n)$-orbits $f$ and $(1,m) \cdot f$ if $f$ lies in the same $\Z$-orbit as $x_0$. This way, by definition of the two sided protocol, the $(1,m)$-edges from an edge $e$ in $f$ to its parallel edge always get the color 3. This gives condition 5 from the lemma. Condition 3 is ensured by our choice of parallel or alternating protocols for $(0,n)$ according to the parity of $m$. Thus we can use the lemma to swap parity along every $(0,n)$-orbit, which lets us transition to the protocol $(0,1) \cdot y_0$.

For the second instance, we want to apply Lemma \ref{lem:general} with $\gamma_1 = (1,m)$ and $\gamma_2 = (0,n)$. Observe that it is possible to define the partition $P$ so that if $\{f,f'\} \in P$ (so $f$ and $f'$ are $(1,m)$-orbits), the edges between them all get color 1. Part (b) of Figure \ref{fig:altpar} gives an example of how to do this. The idea is that we have chosen the protocol for our $(0,n)$-edges according to the parity of $m$ so that our coloring of them descends to one of the quotient of our graph by the subgroup $\langle (1,m) \rangle$, so we can group together orbits exactly if they are related in this quotient by an edge of color 1. This takes care of condition 5 from the lemma, and condition 3 comes from our use of two sided protocols.
\end{proof}

\subsubsection{Generators from $\Z + (1,0)$ with different parity}

As an example to motivate this subsubsection, suppose $S'$ consists of one element of the form $(0,n)$, for $n$ odd, and several of the form $(1,m)$ for $m$ even. It follows from Lemma \ref{lem:1dbasictrans} that if we use parallel protocols for $(0,n)$ and two sided protocols for the $(1,m)$'s, we can transition between different sequences of basepoints as needed, and therefore get a Borel $|S|$-coloring. 

Suppose, though, that we throw in a generator of the form $(1,b)$ for $b$ odd. To use Lemma \ref{lem:1dbasictrans} for this generator would require an alternating protocol for $(0,n)$, but we have already committed to using parallel protocols for it. Thus we need the following:

\begin{lemma}\label{lem:diffparity}
Let $m$,$n$ as in the previous subsubsection, with $m$ even. Let $b > 0$ odd. Let $x_0 \in U''$. We can transition from the parallel protocol $x_0$ for $(0,n)$ and the two sided protocols $x_0$ for $(1,m)$ and $(1,b)$ to the parallel protocol $x_0$ for $(0,n)$, the two sided protocol $x_0$ for $(1,m)$, and the two sided protocol $(1,0) \cdot x_0$ for $(1,b)$.
\end{lemma}

Throughout, let us use the colors 1 through 4 for $(0,n)$ and $(1,m)$ as in the previous subsubsection, and the colors 5 and 6 for $(1,b)$. Our proof split into cases depending on whether $n \mid b$. Suppose first that it does. Then, we can actually make do without the $(1,m)$-edge:

\begin{lemma}
If $n \mid b$, we can transition from the parallel protocol $x_0$ for $(0,n)$ and the two sided protocol $x_0$ for $(1,b)$ to the parallel protocol $x_0$ for $(0,n)$ and the two sided protocol $(1,0) \cdot x_0$ for $(1,b)$.
\end{lemma}

\begin{proof}
Essentially a proof by picture. By working separately on each $\Z/2 \times n\Z$-orbit, we can assume $n = 1$. Then, Figure \ref{fig:n=1} shows how to use the $(0,1)$-edges to transition between our two protocols for the $(1,b)$-edges in the case $b=3$. It should be clear how this pattern can be continued to other $b$.
\end{proof}

\begin{figure}
\centering
\includegraphics[width=\textwidth]{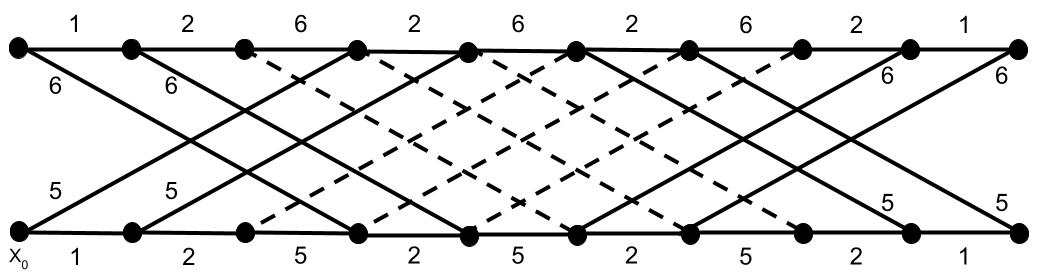}
\caption{\label{fig:n=1} A demonstration of how to change protocol for the $(1,b)$ edges when $n=1$ and $b=3$. The $(0,1)$-edges use parallel protocol $x_0$. The dashed diagonal lines all get the color 1. Observe that, on the left side of the figure, the $(1,b)$-edges use the two sided protocol $x_0$, while on the right they use the two sided protocol $(1,0) \cdot x_0$, as desired.}
\end{figure}

Next suppose that $n$ does not divide $b$. Let $q = \gcd(n,b) = \gcd(n,2b) < n$. The following will be important for a soon-to-come number theoretic argument.

\begin{lemma}\label{lem:farfromn}
$b+m$ and $b-m$ do not both lie in the interval $(n-q,n+q)$ (mod $2n$).
\end{lemma}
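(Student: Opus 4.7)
The plan is a proof by contradiction exploiting the fact that the sum $(b+m) + (b-m) = 2b$ eliminates $m$ entirely. Suppose both $b+m$ and $b-m$ lie in $(n-q,\,n+q) \pmod{2n}$. Adding chosen representatives, $2b$ lies in $(2n-2q,\, 2n+2q) \pmod{2n}$, which is the same subset of $\Z/2n\Z$ as $(-2q,\, 2q) \pmod{2n}$.

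Next I would invoke the standing hypothesis (fixed in Subsubsection \ref{subsubsec:rank1base}) that $n$ is odd, which gives $\gcd(2b, 2n) = 2\gcd(b, n) = 2q$; this is the reason the author bothered to record $q = \gcd(n,b) = \gcd(n,2b)$ in the setup. Thus $2b \pmod{2n}$ is a multiple of $2q$; the multiples of $2q$ in $\Z/2n\Z$ are exactly $\{0,\, 2q,\, 4q,\, \ldots,\, 2n-2q\}$, and since the interval $(-2q,\, 2q) \pmod{2n}$ has width $4q$ and is centered at $0$, the only multiple of $2q$ it contains is $0$ itself. Hence $2b \equiv 0 \pmod{2n}$, and because $n$ is odd this forces $n \mid b$, contradicting the standing assumption that $n \nmid b$.

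The whole argument is essentially one computation; there is no real obstacle, only the modular bookkeeping to check that $(-2q,\, 2q) \pmod{2n}$ meets the cyclic subgroup $\langle 2q \rangle \leq \Z/2n\Z$ only at $0$. I expect this lemma's role is purely to feed into a forthcoming protocol-transition argument, playing the same role that the number-theoretic paragraph played in Subsubsection \ref{subsubsec:3gen2}: it will guarantee, given any starting offset, a position along a $(1,b)$-orbit at which the adjacent $(0,n)$-edges on both sides of a chosen $(1,b)$-edge carry the color needed to invoke Lemma \ref{lem:general} and swap parity.
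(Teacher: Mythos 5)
Your proof is correct and follows essentially the same route as the paper: combine $b+m$ and $b-m$ to cancel $m$, place $2b$ in the width-$4q$ window around $0$ mod $2n$, and use gcd divisibility to pin it down. The only difference is cosmetic — by observing that $2b$ is a multiple of $2q=\gcd(2b,2n)$ rather than merely of $q$, you collapse into a single case what the paper handles as two (the paper separately rules out $2b\equiv\pm q \pmod{2n}$ via the oddness of $q$).
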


\begin{proof}
Suppose not. Then, since $m+b$ and $m-b$ differ by $2b$, a multiple of $q$, they must either be equal mod $2n$, or differ by exactly $q$ mod $2n$.

In the former case, we get $2b = 0$ (mod $2n$), which contradicts $n$ not dividing $b$.

In the latter case, we get $2b = \pm q$ (mod $2n$), but note that $q$ must be odd, so this is also a contradiction.
\end{proof}

Assume then that $b-m$ does not lie in this interval mod $2n$. The argument in the case where $b+m$ is not in this interval will be similar. Assume further that $b-m \in [0,n-q]$ (mod $2n$), as the argument in the case where it is in $[n+q,2n]$ (mod $2n$) will again be similar. With these assumptions, let us prove Lemma \ref{lem:diffparity}:

\begin{figure}
\centering
\includegraphics[width=1\textwidth]{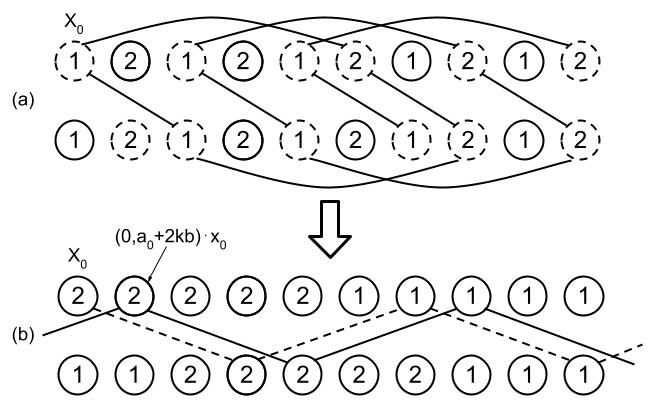}
\caption{\label{fig:uneven2} A demonstration of how to transition between protocols for the $(1,b)$ edges when $n=5$,$m=2$, and $b=3$. For visual clarity, the label of a vertex $x$ refers to the color of the edge $(x,(0,n) \cdot x)$. (a): Step 1 of the procedure. The dashed vertices are those included in one of the $(0,n)$-orbits along which parity is changed. The edges drawn are the $(0,n)$ and $(1,m)$-edges involved in making that change for these orbits. (b): The effects of step 1, along with a pair of $(1,b)$-orbits of the form $f,(0,n) \cdot f$ from step 2. The solid and dashed lines are the edges from $f$ and $(0,n) \cdot f$ respectively. A vertex $(0,a_0+2kb) \cdot x_0$ of $f$ which satisfies the number theoretic condition from step 2 is highlighted.}
\end{figure}

\begin{proof}
In the style of the hard case of Lemma \ref{lem:3gentrans1}, our transition will be accomplished in three steps, with the first step altering the protocol for $(0,n)$, the second step using this altered protocol to make the desired transition for the $(1,b)$-edges, and the third step reversing the first.

To describe the first step, we will need to define a new protocol. First, in the context of a $\Z$-action, say in the orbit $\Z \cdot y_0$, the \textit{block protocol} $y_0$ for $n$ using the colors 1 and 2 will dictate that edges of the form $(a \cdot y_0 , (a+n) \cdot y_0)$ get the color 2 if $a \in [0,n)$ (mod $2n$) and the color 1 otherwise. In our original context, if $k \in \Z$ the \textit{offset protocol} $(x_0,k)$ for $(0,n)$ will dictate that the block protocols $x_0$ and $(1,k) \cdot x_0$ are followed on the $\Z$-orbits $\Z \cdot x_0$ and $\Z \cdot (1,0) \cdot x_0$ respectively.

Consider our starting sequence of protocols. In the proof of Lemma \ref{lem:1dbasictrans}, we saw that we could apply Lemma \ref{lem:general} with $\gamma_1 = (0,n)$ and $\gamma_2 = (1,m)$ by letting $P$ consist of pairs of $(0,n)$-orbits of the form $\{f,(1,m) \cdot f\}$ for $f \subset \Z \cdot x_0$. In that proof, we swapped parity along every pair, but here, let us only swap parity on those pairs for which $(0,2l) \cdot x_0 \in f$ for $l=0,\ldots,(n-1)/2$. The setup for this transition is shown in part (a) of Figure \ref{fig:uneven2}, and the result in part (b). The conclusion is that we can transition from the parallel protocol $x_0$ for $(0,n)$ and the two sided protocol $x_0$ for $(1,m)$ to the offset protocol $(x_0,m)$ for $(0,n)$ and the two sided protocol $x_0$ for $(1,m)$.

Now for step two, we claim that we can transition from the offset protocol $(x_0,m)$ for $(0,n)$ and the two sided protocol $x_0$ for $(1,b)$ to the offset protocol $(x_0,m)$ for $(0,n)$ and the two sided protocol $(1,0) \cdot x_0$ for $(1,b)$.

We want to apply Lemma \ref{lem:general} with $\gamma_1 = (1,b)$ and $\gamma_2 = (0,n)$ and the color 2 in place of the color 3. $P$ will be any partition of the $(1,b)$-orbits into pairs of the form $\{f, (0,n) \cdot f\}$, which is possible as $(0,n)$ has even order in the quotient $\Gamma / \langle (1,b) \rangle$ since $n$ is odd. One such pair is depicted in part (b) of Figure \ref{fig:uneven2}. Condition 3 from the lemma is clear by our use of a two sided protocol for $(1,b)$.

Condition 5 will be handled by an argument similar to that from the proof of Lemma \ref{lem:3gentrans2}. An $f$ as above can be described by a vertex of the form $(0,a_0) \cdot x_0$, so that the vertices in $f \cap \Z \cdot x_0$ are those of the form $(0,a_0 + 2kb) \cdot x_0$ for $k \in \Z$. Each such point determines a pair of parallel edges $e := ((0,a_0+2kb) \cdot x_0,(1,a_0+2kb+b) \cdot x_0)$ and $(0,n) \cdot e$ in $f$ and $(0,n) \cdot f$ respectively. If the two $(0,n)$-edges connecting $e$ and $(0,n) \cdot e$ both are given the color two by the offset protocol $(x_0,m)$, then this pair of edges will witness condition 5 from Lemma \ref{lem:general} as desired.

By definition of the offset protocol, this will be the case if both $a_0+2kb$ and $a_0+2kb+b-m$ are in the interval $[0,n)$ (mod $2n$), so we want to show that given $a_0$, we can always find a $k$ such that this is the case. Since $q = \gcd(n,2b)$, we can find a $k$ such that $a_0+2kb \in [0,q)$ (mod $2n$), and then since we are working under the assumption $b-m \in [0,n-q]$ (mod $2n$), we will also have $a_0+2kb+b-m \in [0,n)$ (mod $2n$) as desired. 
\end{proof}

\subsubsection{Generators from $2\Z$}

The only generators left to consider are those of the form $(0,b)$ for $b$ even. In our treatment of these, we will often work in one $\Z$-orbit at a time. We therefore start by establishing a key lemma in the setting of $\Z$-actions. For this, we start with a purely combinatorial game.

Suppose we have a finite graph consisting of a single path $v_0,\ldots,v_l$ of length $l$, where $v_i$ and $v_j$ are adjacent if and only if $|i - j| = 1$. Suppose the vertices of this path are labeled with two colors, say 5 and 6. In one \textit{move}, we are allowed to pick two adjacent vertices of the same color, and change both of their colors. Some examples of moves are shown in Figure \ref{fig:game}. 

\begin{figure}
\centering
\includegraphics[width=0.4\textwidth]{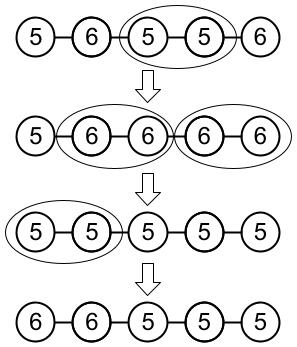}
\caption{\label{fig:game} An example of a sequence of moves in our game with $l = 4$. When a pair of adjacent vertices is circled, it indicates that our next move will be changing the color on those vertices. We sometimes draw two moves as occuring in a single step if the pairs of vertices they involve are disjoint. Note also that this sequence provides an example for Lemma \ref{lem:game}.}
\end{figure}

\begin{lemma}\label{lem:game}
If either $l$ is even and $v_0$ and $v_l$ have different colors, or $l$ is odd and $v_0$ and $v_l$ have the same color, then there is a sequence of moves which result in $v_0$ and $v_l$ switching their colors, but all other vertices keeping the same color.
\end{lemma}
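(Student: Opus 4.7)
The lemma splits into two cases according to its hypothesis. If $l$ is odd and $v_0 = v_l$, the requested ``swap'' is the identity on colors, so the empty sequence of moves works; all the content lies in the case $l$ even, $v_0 \ne v_l$.

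For that case I would first establish a $2$-move \emph{swap gadget}: if $v_k, v_{k+1}, v_{k+2}$ are three consecutive vertices with $v_k \ne v_{k+2}$, then depending on whether $v_{k+1}$ agrees with $v_k$ or with $v_{k+2}$, exactly one of the moves on the pairs $(v_k, v_{k+1})$ and $(v_{k+1}, v_{k+2})$ is initially legal; performing that move, then the other (which becomes legal once $v_{k+1}$ has been flipped), swaps the colors of $v_k$ and $v_{k+2}$ while restoring $v_{k+1}$ to its original color, and touches no vertex outside the triple. This is a short two-case check.

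Next, I would restrict attention to the even-indexed subsequence $d_i := $ color of $v_{2i}$, for $0 \le i \le l/2$. Each application of the gadget to the triple beginning at $v_{2i}$ realizes the adjacent transposition of $d_i$ and $d_{i+1}$ (and requires $d_i \ne d_{i+1}$; when these are equal the transposition is a no-op on the sequence and no moves are needed). Since every permutation of a finite sequence is a product of adjacent transpositions, and since applying such a transposition at a pair of equal values is indistinguishable from skipping it, I can realize the permutation of $d$ that swaps $d_0$ with $d_{l/2}$ and fixes the rest via the factorization
\[
(l/2{-}1,\,l/2)\,(l/2{-}2,\,l/2{-}1)\cdots(0,1)\,\cdot\,(1,2)\,(2,3)\cdots(l/2{-}1,\,l/2),
\]
interpreted as: first shift $d_{l/2}$ leftward to position $0$, then shift the displaced former $d_0$ rightward from position $1$ to position $l/2$. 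Walking through this list and applying the gadget whenever the two consecutive values differ, and doing nothing otherwise, produces the desired final state of $d$, while every odd-indexed vertex $v_{2i+1}$ is left invariant throughout.

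The main obstacle, in my view, is resisting the temptation to prove the lemma by pushing a single mismatched color along the whole path one vertex at a time; that approach fails because intermediate odd-indexed vertices get permanently flipped (indeed there is a parity invariant $E - O$ counting even-indexed minus odd-indexed 5's, which a one-vertex push would violate). The right primitive is the swap gadget, which never changes the middle vertex of its triple; once one switches to this primitive, what remains is the elementary combinatorial fact that every permutation of a finite sequence is a product of adjacent transpositions.
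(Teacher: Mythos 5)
Your treatment of the even case is correct, and it is a genuinely different route from the paper's: the paper argues by strong induction on $l$, locating the first same-colored adjacent pair, playing that move, and splitting the path there, whereas your three-vertex swap gadget together with bubbling along the even-indexed subsequence gives a clean explicit construction (and your observation that transposing two equal values is a no-op is exactly what lets you skip illegal gadget applications). The genuine gap is the odd case, which you dismiss as the identity. In this lemma ``switching their colors'' means that each of $v_0$ and $v_l$ flips its own color, not that the two vertices exchange colors: this reading is forced by the application in Lemma~\ref{lem:Ztransition}, where the code entries at \emph{both} $k_0$ and $k_l$ must change and condition~4 there places you in the same-color case precisely when $l$ is odd; it is also why the paper's base case $l=1$ consists of one move rather than zero. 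Under the intended reading the odd case has real content, and the empty sequence does not achieve it.

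Fortunately the gap closes with your own machinery. Suppose $l$ is odd and $v_0,v_l$ share the color $\alpha$. If $v_{l-1}=\alpha$ as well, first play the move on $(v_{l-1},v_l)$; this flips $v_l$ as required and leaves the even-length path $v_0,\dots,v_{l-1}$ with endpoints of different colors, to which your even-case argument applies, flipping $v_0$ and restoring $v_{l-1}$. If instead $v_{l-1}\neq\alpha$, run your even-case argument on $v_0,\dots,v_{l-1}$ first, which flips $v_0$ and makes $v_{l-1}=\alpha=v_l$, and then play the move on $(v_{l-1},v_l)$ to flip $v_l$ and restore $v_{l-1}$. (For $l=1$ the first alternative degenerates to the single move on $(v_0,v_1)$.) With this addition your argument is complete, and arguably more transparent than the paper's induction.
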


\begin{proof}
We proceed by strong induction on $l$. The base case $l=1$ is clear. 

Now suppose $l > 1$. By hypothesis, there must be some pair $v_i,v_{i+1}$ of adjacent vertices with the same color. Pick the pair with $i$ minimal. Our first move will be changing the colors on this pair.

If $i = 0$, then $v_0$ is now the color we want it to be. If $i > 0$, then we now want to change the colors on $v_0$ and $v_i$, but leave the vertices between them the same color. By the minimality of $i$, in our initial coloring, $v_0$ and $v_i$ had the same color if and only if $i$ is even. Thus, after our first move, the path from $v_0$ to $v_i$ satisfies the hypotheses of the lemma, so by the inductive hypothesis,  we can indeed change the colors of $v_0$ and $v_i$ and leave the vertices between them the same color.

Essentially the same thing happens for the path from $v_{i+1}$ to $v_l$. We would like to check that in our initial coloring, $v_{i+1}$ and $v_l$ had the same color if and only if $l - (i+1)$ is even. We know as in the previous paragraph that $v_{i+1}$ has the same color as $v_0$ if and only if $i+1$ is odd. Also by hypothesis, $v_l$ has the same color as $v_0$ if and only if $l$ is odd. Combining these gives us what we want.
\end{proof}

We now turn to the promised $\Z$-action setting. Let $X'$ be an orbit of some $\Z$-action $a'$. Let $n',b' > 0$ with $n'$ odd, $b'$ even, and furthermore assume $\gcd(n',b') = 1$. Consider edge colorings of the graph $G(a',\{\pm n', \pm b' \}) \res X'$. 

Now a 2 coloring of the $b'$-edges, say with the colors 5 and 6, is completely determined by the colors it gives to the edges $(k \cdot x_0, (k+b') \cdot x_0)$ for $k=0,\ldots,b'-1$, where here $x_0 \in X'$ is some base point as before. Consider this sequence of colors, say $c(0),\ldots,c(b'-1)$, as a map $c: \Z/b' \rightarrow \{5,6\}$. Call this $c$ a $b'$-\textit{code}. We will say this coloring follows the \textit{one code protocol} $(x_0,c)$ for $b'$ using 5 and 6. 

For $\alpha \in \{5,6\}$, let $\neg \alpha$ denote the element of $\{5,6\}$ not equal to $\alpha$. Define the map $\overline{c}:\Z/(2b') \rightarrow \{5,6\}$ by $\overline{c}(k) = c(k)$ if $k \in [0,b')$ (mod $2b'$) and $\overline{c}(k) = \neg c(k)$ otherwise. (Note that we are implicitly sending $k$ through the natural quotient map $\Z/(2b') \rightarrow \Z/b'$. Similar implicit reductions are made throughout the remainder of the section.) $\overline{c}$ will be called the \textit{double code} of $c$. Of course, it carries the same information as $c$, but will sometimes be easier to work with. The point is that, in a coloring following one code protocol $(x_0,c)$ for $b'$, $\overline{c}(k)$ is the color received by the edge $(k \cdot x_0, (k+b') \cdot x_0)$ for each $k \in \Z$.

We are interested in using the $n'$-edges to transition between protocols for the $b'$ edges. Because of Proposition \ref{prop:Zexample}, we should not expect arbitrary transitions to be possible. We do however still have:

\begin{lemma}\label{lem:Ztransition}
In the above setting, if the $b'$-codes $c,c':\Z/b' \rightarrow \{5,6\}$ differ in an even number of entries, we can transition from the odd protocol $x_0$ for $n'$ and the one code protocol $(x_0,c)$ for $b'$ to the odd protocol $x_0$ for $n'$ and the one code protocol $(x_0,c')$ for $b'$.
\end{lemma}

We first note the following special case of the lemma:

\begin{lemma}\label{lem:1stepZ}
Suppose $k \in \Z/(2b')$ is such that $\overline{c}(k) = \overline{c}(k+n')$. Let $c' = \neg c$ at $k$ and $k+n'$, and $c' = c$ everywhere else. Then Lemma \ref{lem:Ztransition} holds for these values of $c$ and $c'$.
\end{lemma}

\begin{proof}
Let $f$ be the $b'$-orbit of $k \cdot x_0$. We want to apply Lemma \ref{lem:general} to change parity for $f$ and $n' \cdot f$. Observe that since $b'$ is even and $n'$ is odd and we use an odd protocol for the $n'$-edges, All edges of the form $(x,n' \cdot x)$ for $x \in f$ have the same color. This will give condition 5 of the lemma. Our hypothesis on the double code gives condition 3.
\end{proof}

\begin{figure}
\centering
\includegraphics[width=0.8\textwidth]{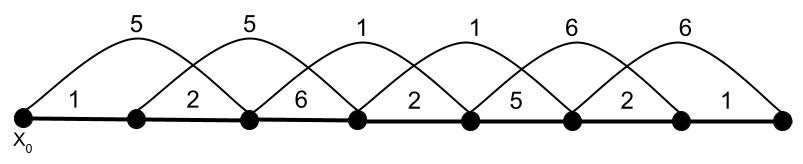}
\caption{\label{fig:1stepZ} A demonstration of the transition described in Lemma \ref{lem:1stepZ} with $n' = 1$, $b' = 2$. Note that on the left side of the figure, we follow one code protocol $(x_0,\{0 \mapsto 5, 1 \mapsto 5 \} )$ for $b'$, while on the the right, we follow one code protocol $(x_0,\{0 \mapsto 6, 1 \mapsto 6 \} )$ for $b'$. Here the $k$ from the lemma would be $0$.}
\end{figure}

Figure \ref{fig:1stepZ} shows the technique used in Lemma \ref{lem:1stepZ}. Now we can establish Lemma \ref{lem:Ztransition}:

\begin{proof}
Of course it suffices to consider the case where $c$ and $c'$ differ in exactly two entries, say at $r_1,r_2 \in \Z/b'$.

We first find a sequence $k_0,\ldots,k_l \in \Z/(2b')$ with the following properties:
\begin{enumerate}
    \item The $k_i$'s are distinct mod $b'$.
    \item $\{k_0,k_l\} = \{r_1,r_2\}$ (mod $b'$).
    \item $k_{i+1} = k_i + n'$ (mod $2b'$) for each $i$.
    \item $\overline{c}(k_0) = \overline{c}(k_l)$ if and only if $l$ is odd.
\end{enumerate}
To do so, first consider the sequence $j_0,\ldots,j_{b'} \in \Z/(2b')$ defined by $j_i = r_1 + in'$. Since $n'$ is coprime to $b'$, there must be some $0 < i_0 < b'$ such that $j_{i_0} = r_2$ (mod $b'$). Our sequence of $k_i$'s will either be the sequence $j_0,\ldots,j_{i_0}$ or the sequence $j_{i_0},\ldots,j_{b'}$. In either case, conditions $2$ and $3$ will hold by construction, and condition 1 will hold since $n'$ is coprime to $b'$.

For condition 4, we use the fact that $j_{b'} = j_0 + b'$ (mod $2b'$), and so $\overline{c}(j_{b'}) = \neg \overline{c}(j_0)$. Thus, whatever the value of $\overline{c}(j_{i_0})$, one of our two options above will give $\overline{c}(k_0) = \overline{c}(k_l)$ and the other will give $\overline{c}(k_0) = \neg \overline{c}(k_l)$. Since $b'$ is even, $i_0$ and $b'-i_0$ have the same parity, so one of these options will give us condition 4. 

Now, by Lemma \ref{lem:1stepZ}, we can make a transition to change the entries $c(k_i)$ and $c(k_{i+1})$ for some adjacent pair of elements $k_i$ and $k_{i+1}$ in our sequence if they are assigned the same value by our double code. This is exactly the type of move we are allowed to make in the game from Lemma \ref{lem:game}, though. Condition 4 is exactly the hypothesis from that lemma, so we can find a sequence of transitions which, cumulatively, change the code entry at $k_0$ and $k_l$, but nowhere else. By condition 2, this is exactly what we wanted.
\end{proof}

Before leaving the $\Z$-action setting, we define a generalization of the one code protocol which will be used below. Let $b''$ be a divisor of $b'$ such that $b'/b''$ is odd. Let $c_0:\Z/b'' \rightarrow \{5,6\}$ be a $b''$-code. The \textit{short code protocol} $(x_0,c_0)$ for $b'$ will be the one code protocol $(x_0,c)$ for $b'$, where $c$ is the $b'$-code given by $c(k) = c_0(k)$ for $k \in [0,b'') \cup [2b'',3b'') \cup \cdots [b'-b'',b')$ and $c(k) = \neg c_0(k)$ elsewhere. The point of requiring $b'/b''$ to be odd is that it implies $\overline{c}(k) = \overline{c_0}(k)$ for each $k \in \Z/(2b')$.

We now return to our original situation with $\Gamma = \Z/2 \times \Z$. Fix $b$ even. Fix $(0,n)$ and $(1,m)$ as in previous sections (so $n$ is odd). Let $q = \gcd(b,n)$, $b' = b/q$ and $n' = n/q$. Also let $b''$ be the largest power of 2 dividing $b'$ (equivalently $b$).

If $b'' \mid m$, then by working separately on each $\Z/2 \times b''\Z$-orbit, we can treat the $(0,b)$ as $(0,b/b'')$. Since $b/b''$ is odd, Lemma \ref{lem:1dbasictrans} will turn out to be enough to handle this. See Lemma \ref{lem:specialeventrans} later. Thus assume $b'' \not|\ m$.




Let $c,d :\Z/b'' \rightarrow \{5,6\}$ be $b''$-codes. The \textit{two code protocol} $(x_0,c,d)$ for $(0,b)$ will dictate that the short code protocols $(x_0,c)$ and $((1,0) \cdot x_0, d)$ for $b$ are followed on the $\Z$-orbits $\Z \cdot x_0$ and $\Z \cdot (1,0) \cdot x_0$ respectively. Note that $b/b''$ is indeed odd.

Let $c_0:\Z/b'' \rightarrow \{5,6\}$ be the $b''$-code taking the constant value 5. An example of the two code protocol $(x_0,c_0,c_0)$ for $(0,b)$ is shown in part (a) of Figure \ref{fig:even}. We would like to transition between the basepoints $x_0$ and $(0,1) \cdot x_0$ for such protocols. Note that the protocol $((0,1) \cdot x_0,c_0,c_0)$ for $(0,b)$ is the same as the protocol $(x_0,c_0',c_0')$, where $c_0'$ is the $b''$-code given by $c_0'(0) = 6$ but $c_0' = 5$ everywhere else.

\begin{lemma}\label{lem:eventrans}
We can transition from the parallel protocol $x_0$ for $(0,n)$, the two sided protocol $x_0$ for $(1,m)$, and the two code protocol $(x_0,c_0,c_0)$ for $(0,b)$ to the parallel protocol $x_0$ for $(0,n)$, the two sided protocol $x_0$ for $(1,m)$, and the two code protocol $(x_0,c_0',c_0')$ for $(0,b)$.

Likewise if the parallel protocols for $(0,n)$ are replaced with alternating protocols.
\end{lemma}

\begin{figure}
\centering
\includegraphics[width=0.8\textwidth]{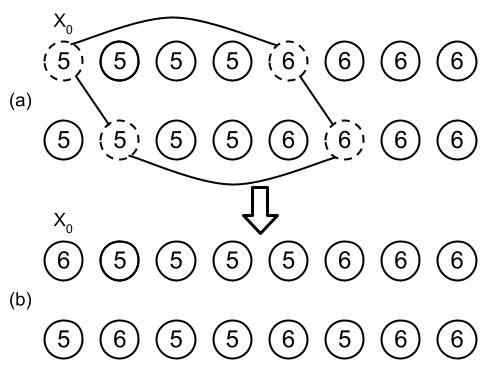}
\caption{\label{fig:even} A demonstration of step 1 in the procedure in the proof of lemma \ref{lem:eventrans} when $b=b'=b''=4$ and $m=1$, in the style of Figure \ref{fig:uneven2}. As in that figure, the label of a vertex $x$ denotes the color of the edge $(x,(0,b) \cdot x)$. (a): A coloring following two code protocol $(x_0,c_0,c_0)$ for $(0,b)$, along with the setup for step 1. The dashed vertices are those included in one of the $(0,b)$-orbits along which parity is changed. The edges drawn are the $(0,b)$ and $(1,m)$-edges involved in making that change for these orbits. (b): The effects of step 1. Observe that in the orbit $\Z \cdot x_0$, the $b$-edges now follow short code protocol $(1 \cdot x_0,c_0) = (x_0,c_0')$, while in the other $\Z$-orbit, they follow short code protocol $((1,0) \cdot x_0,c_0'')$.}
\end{figure}

\begin{proof}
Recall we have assumed $b'' \not|\ m$. Further assume $m \in (0,b'')$ (mod $2b''$), as the argument in the case $m \in (b'',2b'')$ (mod $2b''$) will be similar.

Our procedure will involve two steps. In step 1, we apply Lemma \ref{lem:general} with $\gamma_1 = (0,b)$ and $\gamma_2 = (1,m)$ to swap parity along pairs $f$, $(1,m) \cdot f$ of $(0,b)$-orbits for which $f$ contains a point of the form $(0,kb'') \cdot x_0$. Condition 5 follows from our use of a two sided protocol for $(1,m)$. Condition 3 for these pairs follows from our assumption $m \in (0,b'')$ (mod $2b''$). The set up for this step is shown in part (a) of Figure \ref{fig:even} and the result in part (b). The conclusion is that we can transition from the two sided protocol $x_0$ for $(1,m)$ and the two code protocol $(x_0,c_0,c_0)$ for $(0,b)$ to the two sided protocol $x_0$ for $(1,m)$ and the two code protocol $(x_0,c_0',c_0'')$ for $(0,b)$, where $c_0''$ is the $b''$-code given by $c_0''(m) = 6$ but $c_0'' = 5$ everywhere else. Note for later that $c_0'$ and $c_0''$ differ in two places: at 0 and at $m$.

Thus, after this step, the short code protocol for $b$ followed on the orbit $\Z \cdot x_0$ is the correct one. In step 2, then, we will work within the orbit $\Z \cdot (1,0) \cdot x_0$ to correct things. We need to show that if $y_0$ is a point in a $\Z$-orbit, we can transition from the odd protocol $y_0$ for $n$ and the short code protocol $c_0''$ for $b$ to the odd protocol $y_0$ for $n$ and the short code protocol $c_0'$ for $b$. (So in our application, $y_0$ is $(1,0) \cdot x_0$). 

For this, we will work separately within each $q\Z$-orbit. Fix one, say $q\Z \cdot z_0$, where $z_0 = r \cdot y_0$ for some $0 \leq r < q$. Within this orbit, the generators $b$ and $n$ behave like $b'$ and $n'$, and these satisfy the hypotheses used for the $b'$ and $n'$ from the setting of Lemma \ref{lem:Ztransition}. 

Under this identification, since $q$ is odd, our odd protocol for $n$ turns into an odd protocol for $n'$. The short code protocol $(y_0,c_0'')$ for $b$ turns into the one code protocol $(z_0,c'')$ for $b'$, where $c''$ is the $b'$-code whose double code is given by $\overline{c''}(k) = \overline{c_0''}(r+kq)$ for $k \in \Z/(2b')$. Likewise, the short code protocol $(y_0,c_0')$ for $b$ turns into the one code protocol $(z_0,c')$, where $c'$ is the $b'$-code defined from $c_0'$ in the same way $c''$ is from $c_0''$.

So, by Lemma \ref{lem:Ztransition}, it suffices to show that $c'$ and $c''$ differ in an even number of entries. Now $q$ is coprime to $2b''$ since the former is odd and the latter is a power of 2. Therefore, as $k$ ranges over $\Z/(2b')$, $r+kq$ will take every value mod $2b''$ $b'/b''$ times. Therefore, since $\overline{c_0'}$ and $\overline{c_0''}$ differ in 4 places, $\overline{c'}$ and $\overline{c''}$ differ in $4b'/b''$ places, and so $c'$ and $c''$ differ in $2b'/b''$ places. This is indeed even.
\end{proof}

Finally, we address the case $b'' \mid m$ mentioned earlier. Let $d_0$ be the $b''$-code taking constant value 6.

\begin{lemma}\label{lem:specialeventrans}
If $m/b''$ is even, we can transition from the two sided protocol $x_0$ for $(1,m)$ and the two code protocol $(x_0,c_0,c_0)$ for $(0,b)$ to the two sided protocol $x_0$ for $(1,m)$ and the two code protocol $((0,1) \cdot x_0,c_0,c_0)$ for $(0,b)$.

If $m/b''$ is odd, we can transition from the two sided protocol $x_0$ for $(1,m)$ and the two code protocol $(x_0,c_0,d_0)$ for $(0,b)$ to the two sided protocol $x_0$ for $(1,m)$ and the two code protocol $((0,1) \cdot x_0,c_0,d_0)$ for $(0,b)$.
\end{lemma}

\begin{proof}
We work separately in each $(\Z/2 \times b''\Z)$-orbit. Observe that in every orbit other than the one containing $x_0$, no change needs to occur.

In that orbit, identifying $\Z/2 \times b''\Z$ with $\Z/2 \times \Z$ by identifying $b''$ with 1, the two sided protocol $x_0$ for $(1,m)$ turns into the two sided protocol $x_0$ for $(1,m/b'')$, the two code protocol $(x_0,c_0,c_0)$ for $(0,b)$ turns into the parallel protocol $x_0$ for $(0,b/b'')$ (note $b/b''$ is odd), and the two code protocol $((0,1) \cdot x_0,c_0,c_0)$ for $(0,b)$ turns into the parallel protocol $(0,1) \cdot x_0$ for $(0,b/b'')$. Likewise for the two code protocols using $c_0$ and $d_0$ if ``parallel'' is replaced by ``alternating''. In all cases, we are done by Lemma \ref{lem:1dbasictrans}.
\end{proof}

Note that being able to transition to the basepoint $(0,1) \cdot x_0$ here is enough to be able to transition between any pair of basepoints for our two-code protocols since the two code protocol $((1,0) \cdot x_0, c_0,c_0)$ for $(0,b)$ is the same as the two code protocol $(x_0,c_0,c_0)$ for $(0,b)$, and likewise for $((1,0) \cdot x_0,c_0,d_0)$ and $((0,b'') \cdot x_0,c_0,d_0)$.

\subsubsection{Combining the cases}

Recall that at the beginning of subsection \ref{subsec:1}, we defined a set $S' \subset S$ of generators, and assumed $(1,0) \not\in S$. In this final subsubsection, we give a way of choosing protocols for all the elements of $S'$ so that, by the work of the previous subsubsections, we can transition between any two sequences of basepoints for these protocols. Thus we will conclude:

\begin{lemma}\label{lem:no(1,0)}
If $(1,0) \notin S$, $\chi_B'(\Gamma,S) = |S|$.
\end{lemma}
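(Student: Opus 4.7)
The plan is to follow the same overall scheme as the proof of Lemma \ref{lem:std}, replacing its one-shot transition (Lemma \ref{lem:2sets}) by the per-generator transition procedures assembled throughout Subsubsection \ref{subsubsec:rank1base} and the two subsubsections following it. Since $\Gamma = \Z/2 \times \Z$ has linear growth, Theorems \ref{th:poly} and \ref{th:asi=1} give $\asi_B(G(a,S)) = 1$, which is the needed input.

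First I would pick $N$ large enough that every one of the bounded-range protocol shifts developed in the previous subsubsections --- for the $(0,n)$-edges with $n$ odd, for the $(1,m)$-edges, for the $(1,b)$-edges with $b$ odd, and for the $(0,b)$-edges with $b$ even --- can be performed inside a ball of radius $N$; the choice of $N$ depends only on $S$ and on certain $\gcd$'s of its entries. Applying $\asi_B = 1$ with a sufficiently large parameter $N' \gg N$, take a Borel partition $X = U_0 \sqcup U_1$ each of whose pieces has only finite components in the distance-$N'$ graph, and set $V_i = X \setminus B(U_i, N)$. As in the proof of Lemma \ref{lem:std}, K\"onig's lemma forces the components of $G \res (X \setminus V_i)$ to be finite as well.

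Next, on each finite component of $G \res (X \setminus V_1)$, Borel-pick a base point $x_0$ and color all $G(a,S)$-edges sitting inside $U_0$ using the protocols specified in the previous subsubsections --- one protocol per generator of $S'$ --- relative to $x_0$. Do the symmetric thing on $U_1$ using components of $G \res (X \setminus U_1)$. What is left is to color each finite $G \res (V_1 \setminus V_0)$-component $C$ and reconcile the (possibly distinct) protocols used in the two already-colored regions it borders. This I would do one generator at a time: choose disjoint annular slabs inside $C$, one of width at least $N$ per $\gamma \in S'$, and in the slab assigned to $\gamma$ run the transition procedure from the appropriate subsubsection to align the $\gamma$-protocol across $C$.

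The main obstacle will be that these per-generator transitions must be shown to compose without interference. The key point, stressed in the earlier subsubsections, is that each transition is an instance of Lemma \ref{lem:general} applied with $\gamma_1$ the generator being shifted and $\gamma_2$ some auxiliary generator already in its fixed protocol; only $\gamma_1$-edges get recolored, while auxiliary $\gamma_2$-edges are either left untouched or, in the three-step procedures (for the $(1,b)$-edges when $n \nmid b$ and for the $(0,b)$-edges with $b$ even), are temporarily modified and then restored to their original colors by a symmetric reversal inside the same slab. Hence the generator shifts can be sequenced safely inside $C$, and since every $C$ is finite the resulting choices can be made simultaneously in a Borel fashion, producing the desired Borel $|S|$-edge coloring.
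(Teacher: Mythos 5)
Your proposal is correct and follows essentially the same route as the paper: the paper's proof of this lemma is precisely the accumulated per-generator protocol and transition machinery of the preceding subsubsections, assembled via the toast decomposition from $\asi_B(G(a,S)) = 1$ exactly as in the template of Lemma \ref{lem:std}. You spell out the final assembly (bounded-radius slabs, one generator at a time, non-interference because each transition only recolors $\gamma_1$-edges and restores any temporarily modified auxiliary edges) somewhat more explicitly than the paper, which leaves it implicit after remarking that the remaining cases are handled "very similarly" to the standard-generator case, but the content is the same.
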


Of course, Lemmas \ref{lem:(1,0)} and \ref{lem:no(1,0)} together complete our proof of statement 2 of Theorem \ref{th:Z^d}, and hence of Theorem \ref{th:Z^d}.

Our proof will be a little messier than the corresponding proof at the end of subsection \ref{subsec:2} thanks to the lack of a suitable analog of the $\sim$ relation used there.

Let us begin. There must be some element $(\epsilon,n) \in S'$ for which $n$ is odd. Note that there is an automorphism of $\Gamma$ exchanging $(1,1)$ and $(0,1)$. This will send $(\epsilon,n)$ to $(1-\epsilon,n)$, so we may assume $S'$ has an element of the form $(0,n)$ for $n$ odd. Then it must also have an element of the form $(1,m)$. Fix such elements for the rest of the argument.

We will use two sided protocols for $(1,m)$, and for $(0,n)$ we will use parallel protocols if $m$ is even and alternating protocols if it is odd.
By Lemma \ref{lem:1dbasictrans}, we will be able to transition between any two pairs of basepoints here.

For other elements of $S'$ of the form $(0,n')$ for $n'$ odd, we will also use the parallel protocols if $m$ is even and alternating protocols if it is odd. Again by the lemma, we can use the $(1,m)$-edges to transition between different basepoints for $(0,n')$.

Similarly, we will use two sided protocols for $(1,m')$ for all elements of that form in $S'$. If for such an element, $m' = m$ (mod 2), then again by the lemma we can use the $(0,n)$-edges to transition between different basepoints for $(1,m')$.

If $S'$ has elements of the form $(1,m')$ for both odd and even $m$, let us choose our fixed $m$ to be even (so that we use parallel protocols for $(0,n)$). Then in the previous paragraph we have taken care of all generators of the form $(1,m')$ for $m'$ even. If $(1,b) \in S'$ with $b$ odd, then Lemma \ref{lem:diffparity} tells us we can use the $(0,n)$ and $(1,m)$ edges to transition between different basepoints for $(1,b)$.

It remains to deal with elements of $S'$ of the form $(0,b)$ for $b$ even. Fix such a $b$. As in the previous subsubsection, let $b''$ be the largest power of 2 dividing $b$, $c_0$ be the $b''$-code taking constant value 5, and $d_0$ the $b''$-code taking constant value 6.

If $b'' \mid m$, we will use for $(0,b)$ two code protocols of the form $(x_0,c_0,c_0)$ or $(x_0,c_0,d_0)$ according to the parity of $m/b''$ as prescribed by Lemma \ref{lem:specialeventrans}. By that lemma and the comment after its proof, we can use the $(1,m)$-edges to transition between different basepoints for $(0,b)$.

If $b'' \not|\ m$, we will use for $(0,b)$ two code protocols of the form $(x_0,c_0,c_0)$. By Lemma \ref{lem:eventrans}, we can use the $(0,n)$ and $(1,m)$ edges together to transition between different basepoints for $(0,b)$.

\section*{Acknowledgements}

The author was partially supported by the ARCS foundation, Pittsburgh chapter. He also thanks Clinton Conley for many helpful discussions. He is most indebted to the anonymous referee, whose thoughtful feedback helped greatly improve the presentation of this paper.

\end{document}